\theoremstyle{plain}
\newtheorem{thm}{Theorem}[section]
\newtheorem{prop}[thm]{Proposition}
\newtheorem{cor}[thm]{Corollary}
\newtheorem{lem}[thm]{Lemma}
\theoremstyle{definition}
\theoremstyle{remark}
\numberwithin{equation}{section}
\renewcommand\theequation{\oldstylenums{\thesection}%
                   .\oldstylenums{\arabic{equation}}}
\newcommand\abs[1]{\lvert#1\rvert}
\newcommand\RR{\ensuremath{\mathbb{R}}}
\begin{document}

\title{Birkhoff Center and Statistical Behavior of Competitive Dynamical Systems\thanks{Supported by the National Natural Science Foundation of China (NSFC 11825106 and 12331006).}}

\setlength{\baselineskip}{16pt}

\author{
Xi Sheng and Yi Wang
\\[2mm]
School of Mathematical Sciences\\
University of Science and Technology of China\\
Hefei, Anhui, 230026, P. R. China
\\[2mm]
Yufeng Zhang\footnote{Partially supported by the National Key R\&D Program of China (2022YFA1005801), the China Postdoctoral Science Foundation (2023TQ0233) and the Jiangsu Outstanding Postdoctoral Funding Program.} \thanks{Corresponding author: zyfp@mail.ustc.edu.cn (Y. Zhang).}
\\[2mm]
School of Mathematical Sciences\\ 
Center for Dynamical Systems and Differential Equations\\
Soochow University\\ 
Suzhou, Jiangsu, 215006, P. R. China
}

\date{}
\maketitle

\begin{abstract}
We investigate the location and structure of the Birkhoff center for competitive dynamical systems, and give a comprehensive description of recurrence and statistical behavior of orbits. An order-structure dichotomy is established for any connected component of the Birkhoff center, that is, either it is unordered, or it consists of strongly ordered equilibria. Moreover, there is a canonically defined countable disjoint family $\mathcal{F}$ of invariant $(n-1)$-cells such that each unordered connected component of the Birkhoff center lies on one of these cells. We further show that any connected component of the supports of invariant measures either consists of strongly ordered equilibria, or lies on one element of $\mathcal{F}$.
In particular, any $3$-dimensional competitive flow has topological entropy $0$.
\par
\textbf{Keywords}: Birkhoff center; Competitive dynamical systems; Invariant cones; $(n-1)$-cells; Invariant measures; Entropy.
\end{abstract}

\section{Introduction}

In the biological science and population ecology, competition means that increasing any one of species can only have a negative effect on the per capita growth rate of any other species. A mathematical model of competition between species of organisms frequently involves a system of differential equations \cite{F80,H91,M73}, for which all the off-diagonal entries of its linearized Jacobian matrix are nonpositive. Contrary to popular belief before 1980's, the construction of Smale \cite{S76} showed that an arbitrary smooth flow in the simplex spanned by the unit coordinate vectors can be embedded as an attractor in a competitive system. This reveals that mathematical models of competition in higher dimensions can lead to differential equations with complicated dynamics.

A beautiful geometric theory of the dynamical behavior of competitive systems was initiated by a series of famous works of Hirsch \cite{H82-1,H85-2,H88-3,H88-S}, who showed that competitive systems have a special overall structure regarding individual orbits. For instance, the $\omega$-limit set of any
orbit is unordered (see \cite[Theorem 2.2]{H82-1} or \cite[Theorem 3.3.2]{H95}). Moreover, under mild additional restrictions, there is a canonically defined countable family of unordered, disjoint Lipschitz open $1$-codimensional cells such that the $\omega$-limit set of every non-convergent orbit lies on one of the cells (see \cite{H88-3}). These indicate that the dynamics of competitive systems can be no worse than that of completely general systems of one less dimension. It also means that Smale's construction is not as special as it seems.
Over three decades since its development, the theories of competitive systems and applications have undergone
extensive investigations and continue to expand. One may refer to the monographs and reviews (\cite{HS05,LL-19,M-20,H95,Smi99,Smi17} and references therein) for more details.

From the point of view of abstract dynamic complexity of individual orbits, Hirsch's theory, together with Smale's construction, has been interpreted as meaning that the competitive system is essentially $1$-codimensional.
But from a recurrence and statistical behavior of orbits perspective, it is not yet able to provide feasible and effective information. As a consequence, it pays to be on the lookout for the recurrence and statistical behavior of orbits for competitive systems.

In the terminology of dynamical systems, the presence of non-periodic recurrence turns out to be the first indication of complicated asymptotic behavior. The closure of the set of all recurrent points, called {\it Birkhoff center}, is the hub of recurrence behavior (c.f. \cite{K95,M12}). Statistical behavior of orbits, namely, typical recurrent behavior with respect to an invariant measure (in the measure-theoretic sense)
can be viewed as  stronger ``quantitative" counterparts of ``qualitative" recurrence properties. In particular, the supports of invariant measures are contained in the Birkhoff center (see, e.g. \cite{M12,J20}).

The purpose of the present paper is devoted to the investigation of the location and structure of the Birkhoff center, and gives a comprehensive description of the recurrence and statistical behavior of orbits for competitive systems.
Following Hirsch \cite{H88-3} (or \cite{HS05}), the flow $\Phi_t$ generated by a competitive system of differential equations
preserves a partial order ``$\le$" (induced by a convex cone $C^+$ with nonempty interior {\rm Int}$C^+$) in backward time, that is, $\Phi_t(x)\le \Phi_t(y)$ whenever $x\le y$ and $t<0$. For brevity, we hereafter call $\Phi_t$ a competitive flow. Throughout the paper, we write $x\leq y$ if $y-x\in C^+$. A subset $Y\subset \mathbb{R}^{n}$ is called {\it unordered} if none of its points are related by ``$\leq$". $Y$ is {\it totally ordered} if any of its points are related by ``$\leq$".

The main results of this paper are the following universal location description for the Birkhoff center $\mathcal{B}(\Phi)$ of $\Phi_t$. Under the standard assumptions of dissipation and irreducibility, we first establish an {\it order-structure dichotomy} for any connected component $B$ of the Birkhoff center, that is, $B$ is either unordered; or a totally ordered set of equilibria (see Theorem \ref{Thm_2.1} or Theorem \ref{4.1}). Moreover, we show that there is a canonically defined countable disjoint family $\mathcal{F}$ of invariant open $(n-1)$-cells such that each unordered connected component of the Birkhoff center lies on one of these cells (see Theorem \ref{Thm_2.1}). Compared with those cells constructed in Hirsch \cite{H88-3}, new vital elements are introduced into the family $\mathcal{F}$ in our framework. These cells have nice geometrical properties and are determined by the system in a way to be described later (see Theorem \ref{4.2}).

A straightforward consequence of our main results is that any connected component of the supports of invariant measures is either unordered; or a totally ordered set of equilibria (see Theorem \ref{Thm_2.2}), which is one of the most exciting fallout of Theorem \ref{Thm_2.1}. It reveals that the canonically defined countable disjoint family $\mathcal{F}$ of invariant open $(n-1)$-cells, is actually covering all the unordered connected components of the Birkhoff center $\mathcal{B}(\Phi)$. Consequently, from this global and geometrical perspective, the recurrence and statistical behavior of orbits for competitive systems have a special overall structure.
In particular, we show that any competitive flow in $\mathbb{R}^{3}$ has topological entropy $0$ (see Corollary \ref{Cor_2.3}).
In our forthcoming works \cite{JSWZ-LSC-competitive,SWZ-SAA-1}, the above main results will be further applied to the asymptotic behavior of stochastic  approximation, as well as stochastic perturbations and stability of differential equations.

It deserves to point out that the structure of the Birkhoff center and the supports of invariant measures indicate the (typical) recurrence and statistical behavior, which cannot solely rely on the analysis of the limit sets of individual orbits. Accordingly, in order to show the order-dichotomy for the connected component $B$ of the Birkhoff center, one of the innovations in current work is to introduce a so-called ``joint cone-boundary" idea to locate the the Birkhoff center. As a matter of fact, our approach is heavily based on such new viewpoint and the techniques developed around it.

To be more precise, let $C=C^+\cup (-C^+)$, and let $K$ be the closure of the complement of $C$. Clearly, $C$ is a cone of rank $1$ (abbr. $1$-cone), and $K$ is the complementary cone of $C$ (see, e.g. \cite{FWW17,FWW19,Z-22,Margaliot-21} and references therein). For each $x\in \RR^n$, we define a translated cone pair $(C_x,K_x)$ at $x$, where $C_x=x+C$ and $K_x=x+K$ are the cones translated to the base point $x$. The boundary $\partial C_x$, which equals $\partial K_x$, is called the  {\it joint cone-boundary} of $(C_x,K_x)$. The kernel idea of our approach is to show an \textit{intersection principle for joint cone-boundary}: For any recurrent point $x$, the joint cone-boundary $\partial C_x$ intersects with the Birkhoff center $\mathcal{B}(\Phi)$ only at $x$ itself (see Proposition \ref{endpoint}). Together with the connectedness of the component $B$, such critical insight will enable us to obtain the order dichotomy in Theorem \ref{Thm_2.1} (or Theorem \ref{4.1}), that is, $B$ is either unordered; or a totally ordered set of equilibria.

Meanwhile, we have to emphasize that the proof of this intersection principle is highly nontrivial, whose proof can roughly break into the two steps. The first step is to show that for any recurrent point $x$, the intersection of $\partial C_x$ with the recurrence set $R(\Phi)$ is just $x$ itself (see Proposition \ref{sep_n}). This can be viewed as a ``soft" version of Proposition \ref{endpoint}, since $R(\Phi)$ is contained in the Birkhoff center $\mathcal{B}(\Phi)$ (In fact, $\mathcal{B}(\Phi)$ is the closure of $R(\Phi)$). We manage to accomplish it by analyzing the properties of the IP recurrent-time set of $x$ for the competitive flow $\Phi_t$ (see Proposition {\red A.1} in the Appendix, which is motivated by Furstenberg \cite[Proposition 8.10]{F81} for discrete-time systems). The second step is to reinforce Proposition \ref{sep_n} to the ``solid" version Proposition \ref{endpoint}. For this purpose, one needs to generalize the intersection principle for joint cone-boundary for the subsets with weaker recurrence, for instance, the limit set of recurrent points. To overcome such difficulty, we established a limit-set dichotomy in the category of recurrent points (see Lemma \ref{limit_set_dichotomy}), which helps us accomplish our approach.

Meanwhile, we also point out that the construction of the family $\mathcal{F}$ of disjoint invariant open $(n-1)$-cells in Theorem \ref{Thm_2.1} (see also Theorem \ref{4.2}), is inspired by Hirsch \cite[Theorem 4.3]{H88-3}.
However, compared with Hirsch's construction for Kolmogorov competitive systems (which are only restricted in the first quadrant of $\mathbb{R}^n$), more new elements need to be collected into the family $\mathcal{F}$ in our framework. 
Furthermore, we note that Hirsch's key lemma \cite[Lemma 3.3]{H88-3}, called the absorbing principle for the $\omega$-limit sets, plays a critical role for proving that the $\omega$-limit set lies on one element of $\mathcal{F}$.
In our situation, a difficult question naturally arises that whether such absorbing principle can still be valid for the component $B$ of $\mathcal{B}(\Phi)$, instead for the $\omega$-limit sets, because the analysis of $\mathcal{B}(\Phi)$ cannot be achieved only by the information of the limit-sets of individual orbits.
Fortunately, together with the connectedness of the component $B$, the intersection principle for joint cone-boundary (Proposition~\ref{endpoint}) and connecting lemma (Lemma \ref{3.6_2}, which is an important consequence of Proposition \ref{endpoint}), help us overcome such difficulty and ensure the feasibility of the absorbing principle for the Birkhoff center (see Lemma \ref{k_2}), by which we succeed in obtaining our main results.

This paper is organized as follows. In Section \ref{s2}, we introduce some notations, provide important definitions, and state our main results. In Section \ref{s3}, we explore and present the intersection principle for joint cone-boundary, which plays a key role in the proof of our main theorems. Finally, in Section \ref{Limit dich}, we characterize the geometric structure of the Birkhoff center by defining the family $\mathcal{F}$ of disjoint invariant open $(n-1)$-cells and complete the proof of the main theorems. 
In the appendix, we provide the critical properties of recurrent-time sets which are important for proving the intersection principle.

\noindent\section{Notations and Main Results}\label{s2}
Let $(\mathbb{R}^{n},\left|\cdot \right|)$ be the $n$-dimensional Euclidean space.
A closed subset $C^{+}\subset\mathbb{R}^{n}$ is a \textit{convex cone} if it satisfies: $C^{+} +C^{+} \subset C^{+}$, $\alpha\cdot C^{+}\subset C^{+}$ for all $\alpha > 0$, and $C^{+}\cap(-C^{+}) = \{0\}$.
$C^+$ is called \textit{solid} if its interior $\text{Int}C^+\neq\emptyset$.
A cone $C^+$ naturally induces a (partial) order ``$\leq$" by $x\leq y$ whenever $y-x\in C^+$.
We write $x<y$ if $y-x\in C^+\backslash\{0\}$ and $x\ll y$ if $y-x\in \text{Int}C^{+}$;
A subset $S\subset\mathbb{R}^n$ is \textit{unordered} if $S$ cannot contain distinct points related by $``\leq"$.
$S$ is called \textit{ordered} (resp. \textit{strongly ordered}) if any two points in $S$ are related by $``\leq"$ (resp. $``\ll"$).
Given $x,y\in\mathbb{R}^n$, the set $[[x, y]] = \{z\in\mathbb{R}^n:x\ll z\ll y\}$ is called an \textit{open order interval}, and we write \textit{closed order interval} $[x, y] =\{z\in\mathbb{R}^n:x\leq z\leq y\}$. In
particular, we write $[[x,+\infty]]=\{y\in\mathbb{R}^n: x\ll y\}$ and $[[-\infty,x]]=\{y\in\mathbb{R}^n: y\ll x\}$.

A \textit{flow} $\Phi=\{\Phi_t\}_{t\in\mathbb{R}}$ on $\mathbb{R}^{n}$ is a continuous map $\Phi:\mathbb{R}\times\mathbb{R}^{n}\rightarrow\mathbb{R}^{n}$ with $\Phi_{0}=\text{Id}$ and $\Phi_{t}\circ\Phi_{s}=\Phi_{t+s}$ for $t,s\in\mathbb{R}$. Here, $\Phi_{t}(\cdot)=\Phi(t,\cdot)$ for $t\in\mathbb{R}$ and \text{Id} is the identity map on $\mathbb{R}^{n}$.
A \textit{positive orbit} of $x$ is denoted by $O^{+}(x) = \left\{\Phi_{t}(x):t\geq0\right\}$; a \textit{negative orbit} of $x$ is denoted by $O^{-}(x) = \left\{\Phi_{t}(x):t\leq0\right\}$;
and an \textit{orbit} of $x$ is denoted by $O(x) = \left\{\Phi_{t}(x):t\in\mathbb{R}\right\}$.
An \textit{equilibrium} $p\in\mathbb{R}^{n}$ is a point for which $O(p)=\left\{p\right\}$. The set of all equilibria of $\Phi_t$ is denoted by $E$.
The \textit{$\omega$-limit set} $\omega(x)$ of $x$ is defined by $\omega(x)=\bigcap_{t\geq0}\overline{\bigcup_{\tau\geq t}\Phi_{\tau}(x)}$, the \textit{$\alpha$-limit set} $\alpha(x)$ of $x$ is defined by $\alpha(x)=\bigcap_{t\leq0}\overline{\bigcup_{\tau\leq t}\Phi_{\tau}(x)}$, where the closure of a set $S\subset\mathbb{R}^n$ is denoted by $\bar{S}$.
A subset $D\subset\mathbb{R}^{n}$ is called \textit{invariant} if $\Phi_{t}D= D$ for all $t\in\mathbb{R}$.
A set $\Gamma\subset\mathbb{R}^n$ is an \textit{attractor} (or a \textit{repeller}) for $\Phi_t$ provided $\Gamma$ is a nonempty compact invariant set that has a \textit{neighborhood} $N$ of $\Gamma$ such that $\lim_{t\rightarrow\infty}d(\Phi_{t}(x),\Gamma)=0$  (or $\lim_{t\rightarrow\infty}d(\Phi_{-t}(x),\Gamma)=0$) uniformly in $x\in N$, where $d(a,\Gamma)=\inf_{b\in \Gamma}|a-b|$.

A point $x\in\mathbb{R}^{n}$ is called a \textit{recurrent point} if there exists a sequence $t_{i}\to\infty$ such that $\Phi_{t_{i}}(x)\rightarrow x$ as $t_{i}\rightarrow \infty$.
Denote by $R(\Phi)$ all recurrent points of $\Phi_t$ in $\mathbb{R}^{n}$.
The closure of $R(\Phi)$ in $\mathbb{R}^{n}$ is called {\it Birkhoff center}, labeled by $\mathcal{B}(\Phi)$, i.e., $$\mathcal{B}(\Phi)=\overline{\left\{x\in\mathbb{R}^{n}:x\in\omega(x)\right\}}.$$

The flow $\Phi_t$ is called \textit{competitive} if $x\leq y$ whenever $\Phi_{t}(x)\leq \Phi_{t}(y)$ for some $t>0$; and called \textit{strongly competitive} if $x\ll y$ whenever $\Phi_{t}(x)< \Phi_{t}(y)$ for some $t>0$. 
As one may know, in a system of differential equations, the strongly competitive property can be realized by the irreducibility of the linearized Jacobian matrices (see \cite{H95,HS05}).

\vskip 2mm
Throughout the paper, we assume the following standing assumptions:
\vskip 2mm

\noindent {\bf (H1)} (Competition) The flow $\Phi_{t}$ is strongly competitive in $\mathbb{R}^n$; \vskip 1mm

\noindent {\bf (H2)} (Dissipation) There is a compact attractor $\Gamma$ which uniformly attracts each compact set 

\quad\ of the initial values.
\vskip 2mm

The following theorems are the main results of our paper.
\begin{thm}\label{Thm_2.1}
Assume that \textnormal{(H1)-(H2)} hold. Let $B$ be a connected component of Birkhoff center $\mathcal{B}(\Phi)$. Then one of the following alternatives must occur:\par
{\rm(i)} $B$ is unordered; or otherwise,\par
{\rm(ii)} $B$ consists of strongly ordered equilibria.\\
Moreover, there is a canonically defined countable disjoint family $\mathcal{F}=\{M_i\}$ of invariant open $(n-1)$-cells such that any unordered connected component $B$ lies on one element of $\mathcal{F}$.
\end{thm}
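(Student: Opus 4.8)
The plan is to prove Theorem~\ref{Thm_2.1} in two essentially independent halves: first the order-structure dichotomy for a connected component $B$ of $\mathcal{B}(\Phi)$, and then the construction of the family $\mathcal{F}$ together with the covering property for unordered components. For the dichotomy, I would argue as follows. Fix a connected component $B$ and suppose it is \emph{not} unordered, so there exist distinct $x,y\in B$ with $x\le y$. The goal is to force $B\subset E$ and to make $B$ strongly ordered. The main tool is the intersection principle for joint cone-boundary (Proposition~\ref{endpoint}): for any recurrent point $p$, $\partial C_p\cap\mathcal{B}(\Phi)=\{p\}$. Since $B$ is connected and $y\in C_x^+$ with $x\ne y$, I would like to ``slide'' the cone $C_x$ along $B$; because $B$ is connected, if some point of $B$ lay on $\partial C_p$ for a recurrent $p\in B$ it would have to coincide with $p$, contradicting $x\le y$ being a nontrivial relation. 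The way I expect this to work concretely is: show that the relation $\le$ restricted to $B$ is \emph{total} (any two points are comparable) — if two points $u,v\in B$ were incomparable, then moving along a path in $B$ from a point comparable-above to a point comparable-below one would cross the joint cone-boundary of some recurrent point, contradicting Proposition~\ref{endpoint} applied at that recurrent point (this is exactly the kind of connectedness argument the introduction advertises, using also the connecting lemma, Lemma~\ref{3.6_2}). Once $B$ is totally ordered, I would invoke strong competitiveness: a totally ordered recurrent orbit must be trivial, because along a nontrivial ordered orbit the order relation is strictly monotone in backward time (competitiveness gives $\Phi_t$ order-preserving for $t<0$, so $\Phi_t(x)\le\Phi_t(y)$ never returns both points near their start unless $x=y$) — hence every recurrent point in $B$ is an equilibrium, and by density and closedness $B\subset E$. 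Strong ordering ($u\ll v$ for distinct $u,v\in B$) then follows from strong monotonicity arguments standard in the theory (e.g.\ a nonstrict order relation between equilibria in a strongly competitive system can be upgraded, or handled via the cone structure), so alternative (ii) holds.

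For the construction of $\mathcal{F}$ and the covering statement, I would follow the blueprint of Hirsch~\cite[Theorem~4.3]{H88-3}, adapted as the introduction indicates. The idea is: for each unordered connected component $B$ of $\mathcal{B}(\Phi)$, one attaches an invariant open $(n-1)$-cell $M$ containing $B$, built as a Lipschitz graph over a hyperplane transverse to the order, by exploiting that $B$ is unordered (so it injects into such a hyperplane) and that the flow is monotone in backward time. The family $\mathcal{F}=\{M_i\}$ is then obtained by showing two such cells are either equal or disjoint; this is where the ``absorbing principle for the Birkhoff center'' (Lemma~\ref{k_2}) replaces Hirsch's absorbing principle for $\omega$-limit sets. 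Concretely, if an unordered component $B$ meets two candidate cells, the absorbing principle forces one cell to absorb the relevant part of $B$, and the intersection principle (Proposition~\ref{endpoint}) plus the connecting lemma (Lemma~\ref{3.6_2}) prevent $B$ from spreading across more than one cell; countability comes from the fact that the cells are disjoint open subsets of $(n-1)$-dimensional Lipschitz submanifolds, each containing a point of a countable dense subset, or from a Lindel\"of-type argument on $\Gamma$. I would also have to verify the cells are genuinely $(n-1)$-dimensional and invariant — invariance because $\mathcal{B}(\Phi)$ is invariant and the construction is canonical, dimension from the unorderedness giving a Lipschitz graph structure via the standard cone estimates.

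The main obstacle, as the authors themselves flag, is the passage from the ``soft'' separation (Proposition~\ref{sep_n}, $\partial C_x\cap R(\Phi)=\{x\}$ for recurrent $x$) to the ``solid'' one (Proposition~\ref{endpoint}, with $\mathcal{B}(\Phi)$ in place of $R(\Phi)$), and then exploiting it along a whole connected component rather than a single orbit. Since $\mathcal{B}(\Phi)=\overline{R(\Phi)}$ is strictly larger than $R(\Phi)$ in general, one cannot simply take limits: a point $z\in\mathcal{B}(\Phi)\cap\partial C_x$ is a limit of recurrent points, but those need not lie on $\partial C_x$. The resolution I would pursue is precisely the limit-set dichotomy in the category of recurrent points (Lemma~\ref{limit_set_dichotomy}): the $\omega$- and $\alpha$-limit sets of recurrent points have enough structure (unordered, or ordered equilibria) that one can control how nearby recurrent points approach $\partial C_x$, ruling out a nontrivial accumulation on the joint cone-boundary. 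The second, more delicate obstacle is proving the absorbing principle for the Birkhoff center (Lemma~\ref{k_2}) — Hirsch's original proof is intrinsically about forward limit sets, and transplanting it to a connected component of the Birkhoff center requires the connectedness of $B$ to substitute for the dynamical coherence of a single orbit, which is exactly where the connecting lemma does the heavy lifting. I expect these two points to consume the bulk of the work; once they are in hand, both the dichotomy and the covering by $\mathcal{F}$ follow by the connectedness arguments sketched above.
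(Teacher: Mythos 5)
Your first half (the dichotomy) follows essentially the paper's route: intersection principle (Proposition~\ref{endpoint}) plus connecting lemma (Lemma~\ref{3.6_2}) plus connectedness, then ``totally ordered $\Rightarrow$ equilibria,'' then strong competitiveness upgrades $<$ to $\ll$. But your sketch stops just short of the decisive step. Proposition~\ref{endpoint} permits $\mathcal{B}(\Phi)$ to meet $\partial C_p$ \emph{at $p$ itself}, so ``the path in $B$ crosses the joint cone-boundary of some recurrent point'' is not yet a contradiction: the connecting lemma only forces that recurrent point $p$ into $B$. The paper then has to split $B=(B\cap C_p)\cup(B\cap K_p)$, prove both pieces are connected and meet exactly in $\{p\}$, and apply the connecting lemma a second time to a recurrent point chosen near $B\cap K_p$ (with prescribed order relations to earlier points) to reach the contradiction; moreover a component of $\mathcal{B}(\Phi)$ need not be path-connected, so the argument must run through relatively clopen decompositions rather than paths. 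These are fillable gaps, and your claim that a recurrent point with a nontrivially ordered orbit cannot exist is correct (e.g.\ via non-ordering of $\omega$-limit sets), so this half is essentially aligned with the paper.

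The second half has genuine gaps. First, the family $\mathcal{F}$ is not built ``around $B$'' as a Lipschitz graph over a hyperplane: the paper's cells are defined intrinsically by the dynamics as lower/upper boundaries of basins of repulsion, $M_-(q)=\partial_-R_-(q)$ for $q\in E$ and $M_+(p)=\partial_+R_+(p)$, \emph{together with the new cells at infinity} $M_-(+\infty)$ and $M_+(-\infty)$, built from points whose backward orbits escape to infinity above $\sup\Gamma$ (resp.\ below $\inf\Gamma$). Your proposal omits the infinity cells entirely, yet they are exactly the new elements the theorem needs: an unordered component need not be dominated by any equilibrium, and in that case (the paper's case $B_2\neq\emptyset$ in Lemma~\ref{B1B2}) the only cell available is $M_-(+\infty)$; with only Hirsch-type cells $M_\pm(q)$, $q\in E$, the covering claim fails. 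Second, the mechanism that places $B$ on a single cell is missing from your sketch: one needs Lemma~\ref{k_1} (via the Generic Convergence Theorem for the time-reversed flow: points $z_n\downarrow x$ either have equilibrium $\alpha$-limits $\geq x$ or lie in $R_-(+\infty)$), the density dichotomy of Lemma~\ref{B1B2}, the absorbing principle (Lemma~\ref{k_2}) to identify the single equilibrium $q=\inf\{e\in E:\,B\ll e\}$, and Proposition~\ref{prop_of_cells_infinite}(a),(c) (order-convexity of $R_-(q)$ and closedness of $M_-(q)$ from below) to pass from the dense set $B_1$ or $B_2$ to all of $B$; an ad hoc graph attached to each $B$ would be neither canonical nor obviously disjoint from the cells of other components. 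Third, your countability argument is wrong as stated: pairwise disjoint $(n-1)$-cells in $\mathbb{R}^n$ can be uncountable (parallel hyperplanes), so no Lindel\"of or ``rational point in each cell'' argument applies; in the paper countability comes from the cells being indexed by equilibria (plus $+\infty$ or $-\infty$), i.e.\ from the dynamical labeling, not from disjointness alone.
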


Theorem \ref{Thm_2.1}, with slight stronger results (see Theorems \ref{4.1} and \ref{4.2}), will be proved in Section \ref{Limit dich}. More detailed information of the invariant $(n-1)$-cells in the family $\mathcal{F}$ will be presented in Section \ref{Limit dich}.

A straightforward consequence of Theorem \ref{Thm_2.1} is the description of the order structure of the supports of the invariant measure on $\mathbb{R}^n$.
More precisely, let $(\mathbb{R}^n, \mathcal{B}, \mu)$ be the probability space, where $\mathcal{B}$ is the $\sigma$-algebra of Borel sets in $\mathbb{R}^n$. Let $\mu$ be a $\Phi$-invariant probability measure, i.e., $\mu\circ\Phi_{-t}=\mu$ for all $t\geq0$, 
The support of $\mu$, denoted by ${\rm supp}(\mu)$, is the complement in $\mathbb{R}^n$ of the union of all open sets $U$ such that $\mu(U) = 0$. 
Since ${\rm supp}(\mu)\subset \mathcal{B}(\Phi)$ (see, e.g. \cite[p.28]{M12} or \cite[Appendix A]{J20}), Theorem \ref{Thm_2.1} directly implies the following

\begin{thm}\label{Thm_2.2}
Assume that \textnormal{(H1)-(H2)} hold. Let $B\subset {\rm supp}(\mu)$ be any connected component of the support of invariant measure $\mu$. Then either $B$ consists of strongly ordered equilibria; or $B$ lies on one element of $\mathcal{F}$.
\end{thm}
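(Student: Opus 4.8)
The plan is to derive Theorem~\ref{Thm_2.2} as an essentially immediate corollary of Theorem~\ref{Thm_2.1}, once we verify that a connected component of $\mathrm{supp}(\mu)$ behaves, for our purposes, like a connected subset of the Birkhoff center. The starting point is the classical fact recalled just before the statement: $\mathrm{supp}(\mu)\subset\mathcal{B}(\Phi)$ for every $\Phi$-invariant probability measure $\mu$ (see \cite[p.28]{M12} or \cite[Appendix~A]{J20}). Moreover $\mathrm{supp}(\mu)$ is a closed invariant set, hence so is each of its connected components after taking closure; in particular every connected component $B\subset\mathrm{supp}(\mu)$ is a connected subset of $\mathcal{B}(\Phi)$.

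\medskip

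First I would observe that a connected component $B$ of $\mathrm{supp}(\mu)$ is contained in a unique connected component $\widehat{B}$ of $\mathcal{B}(\Phi)$: indeed $B$ is connected and $B\subset\mathcal{B}(\Phi)$, so $B$ lies in exactly one connected component of $\mathcal{B}(\Phi)$, which we call $\widehat{B}$. By Theorem~\ref{Thm_2.1} applied to $\widehat{B}$, one of two alternatives holds. If $\widehat{B}$ is unordered, then it lies on one element $M_i\in\mathcal{F}$, and hence $B\subset\widehat{B}\subset M_i$, giving the second conclusion of Theorem~\ref{Thm_2.2}. If instead $\widehat{B}$ consists of strongly ordered equilibria, then in particular $B\subset\widehat{B}\subset E$ and $B$ is a set of strongly ordered points (being a subset of a strongly ordered set), which is the first conclusion.

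\medskip

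The only point requiring a small amount of care is the passage from ``$\widehat{B}$ consists of strongly ordered equilibria'' to ``$B$ consists of strongly ordered equilibria'': this is automatic because any subset of a strongly ordered set of equilibria is again a strongly ordered set of equilibria, so no extra argument is needed. Likewise, in the unordered case we do not even need $B$ itself to be a component of $\mathcal{B}(\Phi)$—being contained in the cell $M_i$ is a hereditary property. I therefore do not anticipate any genuine obstacle in this proof; the entire content is contained in Theorem~\ref{Thm_2.1}, and the role of Theorem~\ref{Thm_2.2} is to package it in the language of invariant measures. If one wished to state the conclusion symmetrically with Theorem~\ref{Thm_2.1} (i.e.\ that $B$ is \emph{either} unordered \emph{or} strongly ordered equilibria, and in the unordered case lies on an element of $\mathcal{F}$), the same argument delivers it verbatim, since ``unordered'' is also inherited by subsets; the mild asymmetry in the present phrasing (``strongly ordered equilibria; or lies on $\mathcal{F}$'') is just a convenient bookkeeping choice and both formulations follow at once from Theorem~\ref{Thm_2.1}.
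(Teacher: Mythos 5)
Your proposal is correct and matches the paper's own (very brief) argument: the paper also deduces Theorem~\ref{Thm_2.2} directly from Theorem~\ref{Thm_2.1} via the inclusion ${\rm supp}(\mu)\subset\mathcal{B}(\Phi)$, with the hereditary nature of ``strongly ordered equilibria'' and of containment in a cell of $\mathcal{F}$ left implicit. Your write-up just makes those hereditary steps explicit, which is fine.
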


\vskip 2mm
\noindent\textbf{Remark 2.1.} 
The celebrated work by Hirsch \cite{H88-3} on competitive systems, which is mainly from the point of view of the complexity of the $\omega$-limit sets for individual orbits, has been interpreted in many literatures that the competitive system is essentially 1-codimensional. However, it deserves to point out that sometime such perspective is somewhat deceptive, because the study of global dynamics cannot boil down to understanding the structure of $\omega$-limit sets. In light of Theorems \ref{Thm_2.1} and \ref{Thm_2.2}, we have presented feasible and effective information for the recurrence and statistical behavior of orbits for competitive systems, which provide a significant supplement which guarantees that the competitive system is essentially 1-codimensional. 
\vskip 2mm

By virtue of Theorem \ref{Thm_2.2}, one can immediately obtain the zero-entropy characterization for 3-dimensional competitive flows.

\begin{cor}\label{Cor_2.3}
Any strongly competitive flow in $\mathbb{R}^3$ has topological entropy 0.
\end{cor}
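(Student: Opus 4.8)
We shall derive the Corollary from Theorem~\ref{Thm_2.2} via the variational principle and the Poincar\'e--Bendixson theory for planar flows. The plan is: (1) reduce to bounding the measure-theoretic entropy of ergodic invariant measures; (2) use the connectedness of the support of an ergodic measure to apply Theorem~\ref{Thm_2.2}; (3) dispose of the two resulting cases --- equilibria, and $2$-cells. For step~(1), note first that by the dissipation hypothesis \textnormal{(H2)} every $\Phi$-invariant Borel probability measure is supported in the compact global attractor $\Gamma$, and $h_{\mathrm{top}}(\Phi)=h_{\mathrm{top}}(\Phi|_\Gamma)$; the variational principle on the compact set $\Gamma$ then gives $h_{\mathrm{top}}(\Phi)=h_{\mathrm{top}}(\Phi_1)=\sup_\mu h_\mu(\Phi_1)$, the supremum ranging over $\Phi$-invariant probability measures. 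Since $\mu\mapsto h_\mu(\Phi_1)$ is affine and the ergodic decomposition applies, it suffices to prove $h_\mu(\Phi_1)=0$ for every \emph{ergodic} $\Phi$-invariant measure $\mu$.

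Next I would record the elementary fact that, for ergodic $\mu$, the support $\mathrm{supp}(\mu)$ is connected: it is a nonempty compact invariant set, and were it a disjoint union of two nonempty relatively clopen subsets $A,B$, then for each $x\in\mathrm{supp}(\mu)$ the set $\{t\in\mathbb{R}:\Phi_t(x)\in A\}$ would be clopen in $\mathbb{R}$ and contain $0$, hence equal to $\mathbb{R}$; so $A$ would be invariant with $0<\mu(A)<1$, contradicting ergodicity. Thus $\mathrm{supp}(\mu)$ is one of its own connected components, and Theorem~\ref{Thm_2.2} gives the alternative: either $\mathrm{supp}(\mu)$ consists of (strongly ordered) equilibria, or $\mathrm{supp}(\mu)$ lies on a single element $M\in\mathcal{F}$, which for $n=3$ is an invariant open $2$-cell, i.e.\ a subset homeomorphic to $\mathbb{R}^2$ on which $\Phi$ restricts to a continuous flow. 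In the first case $\Phi_1$ is the identity on $\mathrm{supp}(\mu)$, so $h_\mu(\Phi_1)=0$ at once.

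In the remaining case $\mathrm{supp}(\mu)\subset M$, I would argue as follows. Since $E$ is closed and invariant, $\mu(E)\in\{0,1\}$; if $\mu(E)=1$ we are back in the previous case, so assume $\mu(E)=0$. By Poincar\'e recurrence, $\mu$-a.e.\ $x$ is a recurrent point of $\Phi$, and by ergodicity $\mu$-a.e.\ $x$ has orbit dense in $\mathrm{supp}(\mu)$. Fix such an $x$; as a recurrent point of the planar continuous flow $\Phi|_M$, Poincar\'e--Bendixson theory forces $x$ either to be an equilibrium (excluded, since $\mu(E)=0$) or to lie on a periodic orbit $\gamma$. But then $\gamma=\overline{O(x)}=\mathrm{supp}(\mu)$, so $\mathrm{supp}(\mu)$ is a single periodic orbit of some period $\tau_0>0$, on which $\Phi_{\tau_0}$ is the identity; hence $h_\mu(\Phi_{\tau_0})=0$, and from $h_\mu(\Phi_t)=|t|\,h_\mu(\Phi_1)$ we conclude $h_\mu(\Phi_1)=0$. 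Since this holds for every ergodic $\mu$, the variational principle yields $h_{\mathrm{top}}(\Phi)=0$.

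The step I expect to require the most care is the passage through the $2$-cell $M$. One must verify that an invariant open $(n-1)$-cell genuinely carries $\Phi$ as a continuous flow on a topological $2$-manifold --- which is immediate from invariance together with the cell structure supplied by Theorem~\ref{Thm_2.1} --- and that the Poincar\'e--Bendixson alternative (a recurrent point of a planar flow is an equilibrium or lies on a periodic orbit) is available for merely \emph{continuous} planar flows rather than only for $C^1$ vector fields; this is classical, following from the existence of local transversals and the Jordan curve theorem (see, e.g., the standard treatments of continuous flows on the plane and the sphere). Alternatively, one may invoke directly the theorem that every continuous flow on a $2$-manifold has zero topological entropy and apply the variational principle to $\Phi|_M$. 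All other ingredients --- the reduction to the attractor $\Gamma$, the ergodic decomposition, the connectedness of $\mathrm{supp}(\mu)$, and the scaling $h_\mu(\Phi_t)=|t|\,h_\mu(\Phi_1)$ --- are routine.
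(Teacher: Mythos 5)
Your proposal is correct, and its skeleton --- reduce via the attractor and ergodic decomposition to ergodic measures, note connectedness of $\mathrm{supp}(\mu)$, apply Theorem~\ref{Thm_2.2}, finish with the variational principle --- is the same as the paper's. The difference is in how the unordered case is brought down to dimension two. The paper does not use the cell $M\in\mathcal{F}$ at this point at all: it imports connectedness of $\mathrm{supp}(\mu)$ from Hirsch \cite{H99}, invokes Smith's theorem \cite[Theorem 3.3.4]{H95} that a strongly competitive flow restricted to a compact unordered invariant set in $\mathbb{R}^3$ is topologically equivalent to a flow on a compact invariant subset of the plane, and then quotes Young \cite{Y77} to get zero measure-theoretic entropy. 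You instead prove connectedness of $\mathrm{supp}(\mu)$ by an elementary clopen argument, use the invariant open $2$-cell $M$ furnished by Theorems~\ref{Thm_2.2} and~\ref{4.2} as the planar carrier, and run Poincar\'e--Bendixson for continuous planar flows on a $\mu$-generic recurrent point with dense orbit, concluding that $\mathrm{supp}(\mu)$ is an equilibrium or a single periodic orbit. Your route buys a stronger structural statement (every ergodic measure sits on an equilibrium or a closed orbit, a Poincar\'e--Bendixson theorem for invariant measures) and avoids Smith's $3$D-to-$2$D equivalence; its cost is that you must justify the Poincar\'e--Bendixson alternative for merely continuous flows (local sections via Whitney--Bebutov plus the Jordan curve theorem, as you acknowledge), and your alternative ending --- quoting the zero-entropy theorem for flows on $2$-manifolds applied to $\Phi|_M$ --- needs one extra remark since $M\cong\mathbb{R}^2$ is not compact: extend the flow to the one-point compactification $S^2$ fixing the point at infinity (continuity of this extension is a short compactness argument) before applying Young's theorem. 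These are minor and no less rigorous than the paper's own citations, so I regard your argument as a valid, slightly more self-contained variant.
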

\begin{proof}
Let $\mu$ be any ergodic invariant measure and $B={\rm supp}(\mu)$. Then $B$ is an invariant connected component of the Birkhoff center (see more details in Hirsch \cite[Proposition 1.8]{H99}) for the competitive flow $\Phi_t$. Hence, by Theorem \ref{Thm_2.2}, $B$ is either unordered, or consists of strongly ordered equilibria.
When $B\subset E$ is strongly ordered, it is clear that $B$ is just an equilibrium, because $\mu$ is ergodic. 
When $B$ is unordered, the competitive flow on $B\subset\mathbb{R}^3$ is topologically equivalent to a flow on a compact invariant set of systems in $\mathbb{R}^2$ (see Smith \cite[Theorem 3.3.4]{H95}).
Since any continuous flow on a compact 2-dimensional manifold has measure-theoretic entropy 0 (see Young \cite{Y77}). 
Therefore, in both case, $\Phi_t$ has zero-measure theoretic entropy for $\mu$.
By the Variational Principle (more details see \cite{D70}), one obtains that $\Phi_t$ has topological entropy 0.
\end{proof}
In our future work \cite{JSWZ-LSC-competitive,SWZ-SAA-1}, we will apply Theorem \ref{Thm_2.1} and Theorem \ref{Thm_2.2} to investigate the asymptotic behavior of stochastic approximation (c.f. \cite{B99}) or stochastic perturbations and stability of differential equations (c.f. \cite{J20,J23}).

\noindent\section{Intersection Principle for Joint Cone-Boundary}\label{s3}

In this section, we will establish the intersection principle for joint cone-boundary, which turns out to be a crucial tool to locate the Birkhoff center in the next section.

In order to explain this intersection principle precisely, we let $C=C^+\cup (-C^+)$ and let $K$ be the closure of $\mathbb{R}^n\backslash C$. $C$ is called a cone of rank 1, and $K$ is the complementary cone of $C$ (see \cite{FWW17,FWW19,Z-22,Margaliot-21} and references therein).
Fix each $x\in\mathbb{R}^n$, we define a translated cone pair $(C_x,K_x)$ at $x$, where $C_x=x+C$ and $K_x=x+K$. The boundary $\partial C_x$ (which equals to $\partial K_x$) is called the joint cone-boundary of $(C_x,K_x)$. 
Then, it is not difficult to see that $\Phi_t$ is competitive if and only if $\Phi_{-t}C_x\subset C_{\Phi_{-t}(x)}$ for any $t\geq0$, or equivalently, $\Phi_{t}K_x\subset K_{\Phi_{t}(x)}$ for any $t\geq 0$. In addition, it is strongly competitive if and only if $\Phi_{-t}C_x\backslash\{\Phi_{-t}(x)\}\subset {\rm Int}C_{\Phi_{-t}(x)}$ for any $t>0$, or equivalently, $\Phi_{t}K_x\backslash\{\Phi_{t}(x)\}\subset {\rm Int}K_{\Phi_{t}(x)}$ for any $t> 0$.

Roughly speaking, the intersection principle for joint cone-boundary (see Proposition \ref{endpoint}) says that, for any recurrent point $x$, $\partial C_x$ intersects with the Birkhoff center $\mathcal{B}(\Phi)$ only at $x$ itself.

Before proceeding our approach, we need more notations and preliminaries.
For any two closed subsets $A,B\subset\mathbb{R}^n$, the \textit{Hausdorff distance} between $A$ and $B$ is defined as $$d_{H}(A,B)=\max\{\sup_{a\in A} d(a,B),\sup_{b\in B} d(b,A)\},$$
where $d(a,B)=\inf_{b\in B}|a-b|$ and $d(b,A)=\inf_{a\in A}|a-b|$.
While, the \textit{separation index} between $A$ and $B$ is defined as $${\rm \underline{dist}}(A,B)=\inf_{x\in A,y\in B}|x-y|.$$
Clearly, ${\rm\underline{dist}}(A,B)>0$ if and only if $A\cap B=\emptyset$ when $A,B$ are compact. In particular, $d(x,B)={\rm \underline{dist}}(\{x\},B)$.
Label $A_x\triangleq x+A$ for any $A\subset \mathbb{R}^n$ and $x\in\mathbb{R}^n$.
Some useful relationships between $d_H$ and ${\rm\underline{dist}}$ are listed below (see \cite[Proposition 4.37]{R-09}):
\begin{align}
&d_{H}(A_x,A_y)\leq |x-y|;\label{dis_1}\\
&{\rm\underline{dist}}(A,B)\leq{\rm\underline{dist}}(A,C)+d_H(B,C),\label{dis_2}
\end{align}
where $A,B,C$ are closed sets in $\mathbb{R}^n$. In addition, we have 
\begin{lem}\label{4p}
For compact sets $A,B$ in $\mathbb{R}^n$,\par 
{\rm(i)} $d_{H}(A,\Phi_{t} A)$ is continuous with respect to $t$;\par
{\rm(ii)} Fix $t>0$, ${\rm\underline{dist}}(\Phi_{t}(x)-\Phi_{t}(A_x), B)$ is continuous with respect to $x$.
\end{lem}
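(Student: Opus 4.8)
The plan is to reduce each continuity assertion to the continuity, in the Hausdorff metric, of a compact-set-valued map, and then to deduce the latter from the uniform continuity of $\Phi$ on an appropriate compact domain.

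First I would record two elementary Lipschitz-type estimates. By the triangle inequality for the Hausdorff metric on nonempty compact subsets of $\mathbb{R}^n$ one has $|d_H(A,C)-d_H(B,C)|\le d_H(A,B)$; and from \eqref{dis_2} together with the symmetry of ${\rm \underline{dist}}$ one gets $|{\rm \underline{dist}}(A,C)-{\rm \underline{dist}}(B,C)|\le d_H(A,B)$. Hence, for (i) it suffices to prove that $t\mapsto\Phi_tA$ is continuous for $d_H$, and for (ii) that $x\mapsto\Phi_t(x)-\Phi_t(A_x)$ is continuous for $d_H$.

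For (i), I fix $t_0\in\mathbb{R}$ and $\eps>0$ and use that $\Phi$ is uniformly continuous on the compact set $[t_0-1,t_0+1]\times A$: there is $\delta\in(0,1)$ with $|\Phi_s(a)-\Phi_{t_0}(a)|<\eps$ for all $a\in A$ whenever $|s-t_0|<\delta$. Then $d(\Phi_s(a),\Phi_{t_0}A)\le|\Phi_s(a)-\Phi_{t_0}(a)|<\eps$ for every $a\in A$, and symmetrically, so $d_H(\Phi_sA,\Phi_{t_0}A)\le\eps$; combining with the first estimate above (with $C=A$) gives $|d_H(A,\Phi_sA)-d_H(A,\Phi_{t_0}A)|\le\eps$, which is the desired continuity at $t_0$. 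For (ii), I fix $t>0$, set $g(x,a)=\Phi_t(x)-\Phi_t(x+a)$ so that $\Phi_t(x)-\Phi_t(A_x)=\{g(x,a):a\in A\}$ is compact as a continuous image of $A$, fix $x_0\in\mathbb{R}^n$ and a compact neighborhood $N$ of $x_0$, and invoke uniform continuity of $g$ on $N\times A$: given $\eps>0$ there is $\delta>0$ with $|g(x,a)-g(x_0,a)|<\eps$ for all $a\in A$ whenever $x\in N$ and $|x-x_0|<\delta$. Exactly as before this yields $d_H\bigl(\Phi_t(x)-\Phi_t(A_x),\,\Phi_t(x_0)-\Phi_t(A_{x_0})\bigr)\le\eps$, and then the second estimate above (with third set $B$) gives the claimed continuity of $x\mapsto{\rm \underline{dist}}(\Phi_t(x)-\Phi_t(A_x),B)$.

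The argument is soft and no step is genuinely an obstacle; the only point warranting care is that in (ii) the parameter $x$ enters $\Phi_t(x)-\Phi_t(A_x)$ twice, through the base-point translation $A_x=x+A$ and through the nonlinear map $\Phi_t$, so one must treat the composite $g(x,a)=\Phi_t(x)-\Phi_t(x+a)$ as a single jointly continuous function on the compact set $N\times A$ rather than handling the two dependencies separately. Once this is arranged, uniform continuity on the relevant compact domain (namely $[t_0-1,t_0+1]\times A$ in (i) and $N\times A$ in (ii)) does all the work.
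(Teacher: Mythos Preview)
Your proof is correct. For (i), both you and the paper argue via uniform continuity of $\Phi$ on a compact domain, though the paper is less explicit: it bounds $\sup_{u\in A} d(u,\Phi_t A)$ by $\sup_{u\in A}|u-\Phi_t(u)|$ and asserts continuity of the latter in $t$, which is exactly your uniform-continuity estimate in disguise. For (ii) the routes diverge. Rather than packaging both $x$-dependencies into the single function $g(x,a)=\Phi_t(x)-\Phi_t(x+a)$ as you do, the paper applies the triangle inequality for $d_H$ repeatedly to split $d_H\bigl(\Phi_t(x)-\Phi_t(A_x),\,\Phi_t(y)-\Phi_t(A_y)\bigr)$ into a translation piece controlled by $|\Phi_t(x)-\Phi_t(y)|$ (via \eqref{dis_1}) and a piece $d_H(\Phi_t(A_x),\Phi_t(A_y))$, which it bounds again by the triangle inequality and \eqref{dis_1}, invoking part (i) along the way. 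Your direct uniform-continuity argument on $N\times A$ is cleaner and self-contained; the paper's decomposition, by contrast, recycles part (i) and makes visible which estimate absorbs which source of $x$-dependence, at the cost of a longer chain of inequalities.
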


\begin{proof}
(i). Clearly, $\sup_{u\in A}d(u,\Phi_{t}A)\leq \sup_{u\in A}|u-\Phi_{t}(u)|$. Given any compact set $A$, one can deduce that $\sup_{u\in A}|u-\Phi_{t}(u)|$ is continuous with respect to $t$, and hence, $\sup_{u\in A}d(u,\Phi_{t}A)$ is also continuous with respect to $t$. Similarly, one can obtain that $\sup_{v\in\Phi_{t}A}d(A,v)$ is continuous with respect to $t$. Therefore, $d_{H}(A,\Phi_{t}A)$ is continuous with respect to $t$.

(ii). Fix $t>0$. For any $x,y\in\mathbb{R}^{n}$, by (\ref{dis_2}), we have
\begin{equation*}
	\begin{split}
		|{\rm\underline{dist}}(\Phi_{t}(x)-\Phi_{t}(A_x), B)-& {\rm\underline{dist}}(\Phi_{t}(y)-\Phi_{t}(A_y), B)| \\
		&\leq d_{H}\left(\Phi_{t}(x)-\Phi_{t}(A_x), \Phi_{t}(y)-\Phi_{t}(A_y)\right) \\
		&\leq d_{H}\left(\Phi_{t}(x)-\Phi_{t}(A_x), \Phi_{t}(y)-\Phi_{t}(A_y)\right) + 
		d_{H}(\Phi_{t}(A_x), \Phi_{t}(A_y)).
	\end{split}
\end{equation*}
Note also that
\begin{align*}
	d_{H}(\Phi_{t}(A_x), \Phi_{t}(A_y)) 
	\leq d_{H}(\Phi_{t}(A_x), A_x) + d_{H}(A_x,A_y) + d_{H}(A_y, \Phi_{t}(A_y)).
\end{align*}
Then, together with (\ref{dis_1}) and Lemma \ref{4p}(i), we obtain that ${\rm\underline{dist}}(\Phi_t(x)-\Phi_{t}(A_x), B)$ is continuous with respect to $x$, which completes the proof.
\end{proof}

Now, we first present a ``soft" version of the intersection principle, that is,

\begin{prop}\label{sep_n}
	Let $x\in R(\Phi)$. Then $R(\Phi)\cap \partial C_x=\{x\}$.
\end{prop}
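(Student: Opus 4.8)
The plan is to argue by contradiction: suppose there is a point $y\in R(\Phi)\cap\partial C_x$ with $y\neq x$. Since $y\in\partial C_x$ the difference $y-x$ lies on the joint cone-boundary $\partial C$, hence $y-x\notin{\rm Int}C^+\cup(-{\rm Int}C^+)$ and also $y-x\notin{\rm Int}K$; in particular $x$ and $y$ are \emph{not} related by $\ll$, yet $y-x$ sits on the common boundary of the order cone and its complement. The competitiveness of $\Phi_t$ says $\Phi_{-t}C_x\subset C_{\Phi_{-t}(x)}$ and, by strong competitiveness, $\Phi_{-t}(\partial C_x\setminus\{x\})\subset{\rm Int}C_{\Phi_{-t}(x)}$ for every $t>0$. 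Thus once we flow backward by any positive time, the point $\Phi_{-t}(y)$ is pushed strictly into the interior of $C_{\Phi_{-t}(x)}$, i.e.\ $\Phi_{-t}(x)\ll\Phi_{-t}(y)$ or $\Phi_{-t}(y)\ll\Phi_{-t}(x)$ (a strict order relation), with the separation quantified by Lemma \ref{4p}(ii). Equivalently, flowing \emph{forward}, $y$ stays on $\partial C_x$ only as long as we do not move; for $t>0$ small, $\Phi_t(y)$ has already left $\partial C_x$, and the strict-order gap at $\Phi_{-t}$ can only grow as $t\to\infty$.

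The core of the argument is then to exploit the recurrence of \emph{both} $x$ and $y$ to produce a contradiction with this monotone growth of the order gap. First I would fix $t_0>0$ and set $\delta:={\rm\underline{dist}}\big(\Phi_{-t_0}(y)-\Phi_{-t_0}(x),\ \partial C\big)>0$, which is positive precisely because strong competitiveness forces $\Phi_{-t_0}(y)-\Phi_{-t_0}(x)\in{\rm Int}C$. Using the cone-monotonicity, for every $t\geq t_0$ the vector $\Phi_{-t}(y)-\Phi_{-t}(x)$ stays in ${\rm Int}C$ and, by the contraction estimate in the backward direction (the strongly competitive inclusion applied repeatedly, together with the continuity in Lemma \ref{4p} and inequality \eqref{dis_2}), the separation ${\rm\underline{dist}}\big(\Phi_{-t}(y)-\Phi_{-t}(x),\partial C\big)$ is bounded below by a positive constant depending only on $t_0$ and $\delta$ — in fact it is nondecreasing in $t$. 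Now invoke recurrence: choose $t_i\to\infty$ with $\Phi_{t_i}(x)\to x$ and, along a subsequence, $\Phi_{t_i}(y)\to y^*$ for some $y^*\in\omega(y)$. Applying $\Phi_{-t_i}$ to $\Phi_{t_i}(x)$ and $\Phi_{t_i}(y)$: on one hand $\Phi_{-t_i}\big(\Phi_{t_i}(x)\big)=x$ and $\Phi_{-t_i}\big(\Phi_{t_i}(y)\big)=y$; on the other hand, pulling the \emph{limit points} back the same amount, the cone relation $\Phi_{t_i}(y)-\Phi_{t_i}(x)$ versus $\partial C$ must be compatible with the fixed large gap at time $0$ — this yields $|\,y-x\,|$ bounded below by a positive constant (from the gap $\delta$), which is fine, but more importantly it forces $y-x$ itself to lie in ${\rm Int}C$, contradicting $y\in\partial C_x$.

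To make the last step rigorous I would actually run the recurrence argument symmetrically. Because $y$ is recurrent, pick $s_j\to\infty$ with $\Phi_{s_j}(y)\to y$; by compactness of $\Gamma$ (hypothesis (H2)) pass to a subsequence so that $\Phi_{s_j}(x)\to x^*\in\omega(x)\subset\Gamma$. The strongly competitive inclusion applied on $[0,s_j]$ gives $\Phi_{s_j}(y)-\Phi_{s_j}(x)\in C_{\Phi_{s_j}(x)}-\Phi_{s_j}(x)=C$ with a separation from $\partial C$ that, by the same monotonicity, is at least the value it had at any earlier fixed time $t_0<s_j$; letting $j\to\infty$ and using Lemma \ref{4p}(ii) to pass the separation index to the limit, $y-x^*\in{\rm Int}C$ with a definite gap. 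Combining with $\Phi_{s_j}(x)\to x^*$ and continuity, one deduces $y-x\in{\rm Int}C$ as well, contradicting $y\in\partial C_x=\partial C+x$. Hence no such $y$ exists and $R(\Phi)\cap\partial C_x=\{x\}$.

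The main obstacle I anticipate is the quantitative monotonicity claim: turning the qualitative inclusion $\Phi_{-t}(\partial C_x\setminus\{x\})\subset{\rm Int}C_{\Phi_{-t}(x)}$ into a \emph{uniform lower bound} on ${\rm\underline{dist}}(\Phi_{-t}(y)-\Phi_{-t}(x),\partial C)$ that does not degenerate as $t\to\infty$. On the attractor $\Gamma$ this should follow from compactness plus the continuity statements in Lemma \ref{4p}, but one must be careful that the orbit segments stay in a compact set (which is exactly what (H2) provides) and that the estimate \eqref{dis_2} is applied to the right triple of closed sets. I expect the appendix result on IP recurrent-time sets to be the tool that replaces the naive "monotone gap" heuristic with a correct argument, so in the final write-up the contradiction will be extracted from the structure of the recurrent-time set of $x$ rather than from a literal monotonicity of the separation index; but the geometric picture above is the one driving the proof.
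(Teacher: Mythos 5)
Your sketch does not close the two steps on which it actually rests, and one of them is stated in the wrong direction. First, the quantitative claim that ${\rm \underline{dist}}\bigl(\Phi_{-t}(y)-\Phi_{-t}(x),\,\partial C\bigr)$ is nondecreasing (or uniformly bounded below) in $t$ has no justification: strong competitiveness only gives the qualitative inclusion $\Phi_{-t}(y)-\Phi_{-t}(x)\in{\rm Int}\,C$ for $t>0$, and since $0\in\partial C$ the separation is at most $|\Phi_{-t}(y)-\Phi_{-t}(x)|$, which can degenerate; backward orbits need not even stay in a compact set, so Lemma \ref{4p} plus (H2) does not rescue it. You concede this and say the IP-set result of the appendix will "replace" the heuristic, but you never say how, and that is precisely the heart of the proof being deferred. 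Second, your limiting argument along $s_j\to\infty$ uses forward invariance of the cone relation $C$: for a competitive flow it is the complementary cone $K$ that is forward invariant, and since $y\in\partial C_x\subset K_x\setminus\{x\}$, strong competitiveness gives $\Phi_s(y)-\Phi_s(x)\in{\rm Int}\,K$ for $s>0$ — the opposite of your claim $\Phi_{s_j}(y)-\Phi_{s_j}(x)\in C$ — so the conclusion $y-x^*\in{\rm Int}\,C$, and hence $y-x\in{\rm Int}\,C$, does not follow.

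For comparison, the paper's proof does not track the single difference $\Phi_{\pm t}(y)-\Phi_{\pm t}(x)$ at all. It sets $W_x=\omega(z)\cap K_x$ (with $z$ the putative second point of $R(\Phi)\cap\partial C_x$), obtains a \emph{uniform} gap $\delta>0$ between the translated sets $W_x^t=x-\Phi_t(x)+\Phi_tW_x$ and an annular piece of $\partial C_x$ for all $t>1$ by a compactness argument over $\overline{O^+(x)}$ (this is what (H2) and Lemma \ref{4p}(ii) are really for, and it is the correct substitute for your monotone-gap claim), and then uses recurrence of $x$ to find $t_0$ with both ${\rm \underline{dist}}(\Phi_{t_0}W_x,\partial C_x)>\eta$ and the absorbing property $\Phi_{t_0}W_x\subset W_x$. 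Iterating, $z\in W_x\cap\partial C_x$ cannot return $\tfrac{\eta}{2}$-close to itself at any time in $\mathcal{T}(t_0,\varepsilon_0)=\{nt_0+t:n\in\mathbb{N},\,|t|<\varepsilon_0\}$, while Proposition A.1 (the IP structure of the recurrent-time set of $z$) forces such returns — a contradiction. The essential point your proposal lacks is any mechanism producing return times of $z$ inside such a structured time set: plain recurrence of $x$ and $y$ at arbitrary times cannot yield the contradiction, which is exactly why the IP-set machinery and the absorbing property $\Phi_{t_0}W_x\subset W_x$ are needed.
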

\begin{proof}	
Suppose that there exists $z\in R(\Phi)\cap\partial C_{x}$ with $z\neq x$. Write $W_{y}=\omega(z)\cap K_{y}$ for each $y\in\mathbb{R}^n$. Then, due to the invariance of $\omega(z)$ and the fact that $\Phi_t K_x\subset  K_{\Phi_t(x)}$ for $t\ge 0$, we have
\begin{equation}\label{E:in-W}
	\Phi_t W_x\subset W_{\Phi_t(x)}\subset \omega(z), \quad\textnormal{ for all } t\ge 0.
\end{equation}
We now {\it claim that there are some $\eta>0$ and $t_0>1$ such that} $${\rm \underline{dist}}(\Phi_{t_{0}}W_{x},\partial C_{x})>\eta\ \ and\ \ \Phi_{t_{0}}W_{x}\subset W_{x}.$$

Before we prove the claim, we first show how it deduces a contradiction, which helps us prove this proposition. In fact, by (\ref{dis_2}), one has  
\begin{equation}\label{d_d_h}
	{\rm \underline{dist}}(\Phi_{t_{0}+t} W_{x}, \partial C_{x})\geq {\rm \underline{dist}}(\Phi_{t_{0}} W_{x}, \partial C_{x})-d_{H}(\Phi_{t_{0}} W_{x}, \Phi_{t_{0}+t} W_{x}),\quad\text{ for all }t\in\mathbb{R}.
\end{equation}
For the last term on the right side of (\ref{d_d_h}), Lemma \ref{4p}(i) entails that there exists $\varepsilon_{0}>0$ such that
\begin{equation}\label{dis_betw_Wx_Wxt0}
	d_{H}\left(\Phi_{t_{0}} W_{x}, \Phi_{t_{0}+t} W_{x}\right)<\frac{1}{2}\eta,\quad \text{ for any } |t|<\varepsilon_{0}.
\end{equation}
Together with (\ref{dis_betw_Wx_Wxt0}), the first part of claim implies that (\ref{d_d_h}) can deduce that		
\begin{equation}\label{dis_betw_Wx_pCx}
	{\rm \underline{dist}}(\Phi_{t_{0}+t} W_{x}, \partial C_{x}) >\frac{1}{2}\eta,\quad \text{ for any } |t|<\varepsilon_{0}.	
\end{equation}
Denote the time-set by $\mathcal{T}(t_0,\varepsilon_0)\triangleq\{nt_0+t:n\in\mathbb{N},|t|<\varepsilon_0\}$. 
By the second part of claim $\Phi_{t_{0}}W_{x}\subset W_{x}$, we can reinforce (\ref{dis_betw_Wx_pCx}) into 
\begin{equation}\label{ds}
	{\rm \underline{dist}}(\Phi_{s} W_{x}, \partial C_{x})>\frac{1}{2}\eta,\quad \text{ for any } s\in\mathcal{T}(t_0,\varepsilon_0).
\end{equation}
Recall that $z\in W_x\cap\partial C_x$ (since $z\in R(\Phi)\cap\partial C_{x}$). Then, by (\ref{ds}), one obtains that $$|\Phi_{s}(z)-z|>\frac{1}{2}\eta,\quad \text{ for any } s\in \mathcal{T}(t_0,\varepsilon_0).$$
Hence, $$\{t>0: |\Phi_{t}(z)-z|<\frac{1}{2}\eta\}\cap \mathcal{T}(t_{0},\varepsilon_{0})=\emptyset.$$ This contradicts to the properties of the IP-set (Proposition {\red A.1}) in the Appendix. Thus, we have obtained $R(\Phi)\cap \partial C_{x}=\{x\}$.
	
Therefore, it remains to prove the claim. For this purpose, for each $t\ge 0$, we denote the set
$$\ W_{x}^{t}\triangleq x-\Phi_{t}(x)+\Phi_{t}W_{x}.$$ It then follows from \eqref{E:in-W} that
$W_x^t\subset x-\Phi_t(x)+W_{\Phi_t(x)}$. Recall that $K_x=x-\Phi_t(x)+K_{\Phi_t(x)}$. Then, 
\begin{equation}\label{E:wtx}
	W_x^t\subset K_x, \quad\textnormal{ for all } t\ge 0.
\end{equation}
Note that $x\notin\omega(z)$ (otherwise, $\omega(z)$ will possess two order-related points $x$ and $z$, contradicting the non-ordering of $\omega(z)$ (see \cite[Theorem 3.3.2]{H95})). Then, 	
one can find $\eta_2>\eta_1>0$ and a closed annulus domain $A=\{v\in \RR^n:\eta_1\le \abs{v-x}\le \eta_2\}$
such that $\omega(z)\subset {\rm Int}A$; and hence,
\begin{equation}\label{E:ome-z}
	{\rm \underline{dist}}(\omega(z),\,\partial C_{x}\backslash\text{Int}A)>0.\
\end{equation}
Consider the boundary $\partial C_{x}$ restricted to $A$, i.e., $\partial C_{x}^{A}=\partial C_{x}\cap A.$	We now {\it assert} that there is $\delta>0$ such that
\begin{equation}\label{E:wtx-bdd}
	{\rm \underline{dist}}\left(W_{x}^{t},\partial C_{x}^{A}\right)\geq\delta, \quad\textnormal{ for all } t> 1.
\end{equation}
In fact, we take $r>0$ large enough so that $\omega(z)\subset B_{r}(y)$ for all $y\in\overline{O^{+}(x)}$.
Define the compact set $R_{y}=\{v\in K_y: |v-y|\leq r\}$ for each $y\in \overline{O^{+}(x)}$, and it is clear that
\begin{equation}\label{W_R_K}
	W_{x}\subset R_{x}\subset K_x.
\end{equation}
Let $R_x^t=x-\Phi_t(x)+\Phi_1 R_{\Phi_{t-1}\left(x\right)}$.
Together with (\ref{E:in-W}), it follows from (\ref{W_R_K}) that
$\Phi_{t} W_x\subset\Phi_1 W_{\Phi_{t-1}(x)}\subset\Phi_1 R_{\Phi_{t-1}(x)}$ for any $t>1$. So, 
\begin{equation*}
	W_x^t\subset R_x^t,\quad \text{ for any } t>1.
\end{equation*}
Hence, we have
\begin{equation}\label{d>d}
	{\rm \underline{dist}}(W_{x}^{t},\partial C_{x}^{A})\geq {\rm\underline{dist}}(R_x^t, \partial C_{x}^{A}),\quad \text{ for any } t>1.
\end{equation}
In addition, due to $R_x\subset K_x$ (see (\ref{W_R_K})) and the definition of $R_y$ (with $y\in\overline{O^+(x)}$),
\begin{equation}\label{R_belong_K}
	R_{\Phi_{t-1}(x)}=\Phi_{t-1}(x)-x+R_x\subset\Phi_{t-1}(x)-x+K_x=K_{\Phi_{t-1}(x)},\quad \text{ for any } t>1.
\end{equation}
Since $\Phi_t$ is strongly competitive, we have
\begin{equation}\label{strongly_comp}
	\Phi_1 K_{\Phi_{t-1}(x)}\backslash\{\Phi_{t}(x)\}\subset \text{Int}K_{\Phi_t(x)},\quad \text{ for any } t>1.
\end{equation}
Together with (\ref{R_belong_K}) and (\ref{strongly_comp}), it yields that	$\Phi_{1}R_{\Phi_{t-1}(x)}\backslash\{\Phi_t(x)\}\subset \text{Int} K_{\Phi_{t}(x)}$, that is, 
\begin{equation}\label{Rxt_belong_K}
	R_x^t\subset \text{Int}K_x\cup\{x\},\quad\text{ for any }t>1.
\end{equation}
Noticing that $\partial C_{x}^{A}\subset\partial C_x\backslash\{x\}$ and $(\text{Int}K_x\cup\{x\})\cap (\partial C_x\backslash\{x\})=\emptyset$, (\ref{Rxt_belong_K}) deduces that the two compact sets $R_x^t$ and $\partial C_{x}^{A}$ are disjoint. This means
\begin{equation*}
	{\rm\underline{dist}}(R_x^t,\partial C_{x}^{A})>0,\quad\text{ for any }t>1.
\end{equation*}
In view of Lemma \ref{4p}(ii), we obtain the continuity of ${\rm\underline{dist}}(x-\Phi_1(y)+\Phi_{1}R_{y}, \partial C_{x}^{A})$ with respect to $y\in\overline{O^{+}(x)}$. 
Then, there exists $\delta>0$ such that
\begin{equation}\label{Sepa_ind>0}
	{\rm\underline{dist}}(R_x^t, \partial C_{x}^{A})\geq\delta,\quad\text{ for any }t>1,
\end{equation}
because $\overline{O^{+}(x)}$ is compact.
Moreover, by (\ref{d>d}) and (\ref{Sepa_ind>0}), 
\begin{equation*}\label{unif_dis_W_C}
	{\rm\underline{dist}}(W_{x}^{t},\partial C_{x}^{A})\geq\delta,\quad \text{ for any } t>1.
\end{equation*}
Thus, we have proved the assertion and (\ref{E:wtx-bdd}).

Now, due to \eqref{E:ome-z}-\eqref{E:wtx-bdd}, we choose an $\eta>0$ small that $$0<\eta<\min\left(\frac{\delta}{2},\,\, {\rm \underline{dist}}(\omega(z),\partial C_{x}\backslash\text{Int}A)\right).$$
Recall that $x\in R(\Phi)$. Then, there exists $t_0>1$ such that $|x-\Phi_{t_{0}}(x)|<\eta$. So,
by the definition of $W_{x}^{t_0}$ and (\ref{dis_1}),
\begin{align}
	d_{H}(\Phi_{t_{0}}W_{x},W_{x}^{t_{0}})&=d_H(\Phi_{t_0}W_{x},x-\Phi_{t_0}(x)+W_{\Phi_{t_0}(x)})\le |x-\Phi_{t_0}(x)|<\eta. \label{E:W-Wt0}
\end{align}
Consequently, together with (\ref{dis_2}), (\ref{E:wtx-bdd}) and (\ref{E:W-Wt0}), we obtain that
\begin{align}
	{\rm \underline{dist}}\left(\Phi_{t_{0}} W_{x}, \partial C_{x}^{A}\right)
	&\geq {\rm \underline{dist}}\left(W_{x}^{t_{0}}, \partial C_{x}^{A}\right)-d_{H}(\Phi_{t_{0}}W_{x},W_{x}^{t_{0}})>\delta-\eta>\eta. \label{E:W-eta-L}
\end{align}
Note also that $\eta<{\rm \underline{dist}}(\omega(z),\partial C_{x}\backslash{\rm Int}A))$ and $\Phi_{t_{0}}W_{x}\subset\omega(z)$ (see \eqref{E:in-W}), we further obtain $${\rm \underline{dist}}(\Phi_{t_0}W_x,\partial C_{x}\backslash{\rm Int}A))>\eta.$$ Together with \eqref{E:W-eta-L}, this entails that
\begin{equation}\label{E:W-boundary}
	{\rm \underline{dist}}(\Phi_{t_{0}}W_{x},\partial C_{x})>\eta,
\end{equation}
which is the first part of the claim.

Finally, we show $\Phi_{t_{0}}W_{x}\subset W_{x}$. To this end, recall that $W_x=K_x\cap\omega(z)$, by the invariance of $\omega(z)$, it suffices to prove that $\Phi_{t_{0}}W_{x}\subset K_x$. Suppose that $\Phi_{t_{0}}W_{x}\not\subset K_x$. 
Then one can find $y\in \Phi_{t_{0}}W_{x}\cap \text{Int}C_{x}$ (see $\text{Int}C_{x}=\mathbb{R}^n\backslash K_x$).
It follows from $y\in \Phi_{t_0}W_x$ and (\ref{E:W-boundary}) that
\begin{equation}\label{y_partial_C}
	{\rm \underline{dist}}(y,\partial C_x)>\eta.
\end{equation}
Noticing that $y\in\text{Int}C_x$, (\ref{y_partial_C}) deduces that $${\rm \underline{dist}}(y,K_x)>\eta.$$
Combined with (\ref{E:wtx}), one has 
\begin{equation}\label{y_Wt0x>eta}
	{\rm \underline{dist}}(y, W_{x}^{t_{0}})>\eta.
\end{equation}
However, noticing that $y\in \Phi_{t_{0}}W_{x}$, it follows from (\ref{E:W-Wt0}) and definition of $d_H$ that $${\rm \underline{dist}}(y, W_{x}^{t_{0}})\leq d_{H}(\Phi_{t_{0}}W_{x},W_{x}^{t_{0}})<\eta.$$
This clearly contradicts (\ref{y_Wt0x>eta}).
Thus, we have proved $\Phi_{t_{0}}W_{x}\subset K_x$, which completes the proof of the claim.
\end{proof}

\begin{prop}\label{sep_n_1}
Let $x\in R(\Phi)$. Then, for any $y\in R(\Phi)$, 
\begin{equation}\label{omega_R}
	\omega(y)\cap(\partial C_x\backslash\{x\})=\emptyset.
\end{equation}
In particular, 
\begin{equation*}\label{u_omega_R}
	\left(\bigcup_{y\in R(\Phi)}\omega(y)\right)\cap \partial C_x=\{x\}.
\end{equation*}
\end{prop}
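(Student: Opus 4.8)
The plan is to re-run the proof of Proposition~\ref{sep_n}, now with the $\omega$-limit set $\omega(z)$ used there replaced by $\omega(y)$. Suppose, for contradiction, that some $y\in R(\Phi)$ admits a point $z\in\omega(y)\cap(\partial C_x\setminus\{x\})$ (recall $x\in R(\Phi)$). Since $z-x\in\partial C\setminus\{0\}\subset C\setminus\{0\}$, the points $x$ and $z$ are order-related; as $\omega(y)$ is unordered (\cite[Theorem~3.3.2]{H95}) and $z\in\omega(y)$, this forces $x\notin\omega(y)$ (so the subcase $x\in\omega(y)$ is disposed of immediately).

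I would then put $W_u:=\omega(y)\cap K_u$ for $u\in\mathbb{R}^n$, so that $\Phi_tW_u\subset W_{\Phi_t(u)}\subset\omega(y)$ for $t\ge0$, as in \eqref{E:in-W}, and $z\in W_x\cap\partial C_x$. The ``claim'' of Proposition~\ref{sep_n} and its entire proof now carry over essentially verbatim: the only properties of $\omega(z)$ used there are that it is a compact invariant unordered set with $x\notin\omega(z)$ — both valid for $\omega(y)$ — while the other inputs (compactness of $\overline{O^+(x)}$, strong competitiveness, Lemma~\ref{4p}, and $x\in R(\Phi)$) are untouched. This produces $\eta>0$, $t_0>1$, $\varepsilon_0>0$ with $\Phi_{t_0}W_x\subset W_x$ and ${\rm\underline{dist}}(\Phi_{t_0}W_x,\partial C_x)>\eta$; iterating $\Phi_{t_0}W_x\subset W_x$ and using Lemma~\ref{4p}(i) as in \eqref{d_d_h}--\eqref{ds} then gives
\[
{\rm\underline{dist}}(\Phi_sW_x,\partial C_x)>\tfrac12\eta\qquad\text{for all }s\in\mathcal{T}(t_0,\varepsilon_0),
\]
and since $z\in W_x\cap\partial C_x$ this yields $\{t>0:|\Phi_t(z)-z|<\tfrac12\eta\}\cap\mathcal{T}(t_0,\varepsilon_0)=\emptyset$.

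In Proposition~\ref{sep_n} the contradiction came from the IP-set property (Proposition~A.1) of the recurrence-time set of $z$; but here $z\in\omega(y)$ need not be recurrent, and removing this hypothesis is the genuinely hard step. To close the gap I would invoke the limit-set dichotomy for recurrent points (Lemma~\ref{limit_set_dichotomy}): combined with the compactness of $W_x=\omega(y)\cap K_x$ and the forward invariance $\Phi_{t_0}W_x\subset W_x$ (so that $\bigcap_{n\ge0}\Phi_{nt_0}W_x$ is a nonempty compact $\Phi_{t_0}$-invariant set) and with the identity $\omega(y)=\overline{O(y)}$, it allows one to pass from the assumed point $z$ to a genuine recurrent point of $\Phi$ lying on $\partial C_x\setminus\{x\}$, contradicting Proposition~\ref{sep_n} ($R(\Phi)\cap\partial C_x=\{x\}$). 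This last reduction — not the transcription preceding it — is the main obstacle.

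Finally, the ``in particular'' statement follows at once from \eqref{omega_R}: since $x\in R(\Phi)$ we have $x\in\omega(x)$, and $x\in\partial C_x$ trivially, so $x\in\bigl(\bigcup_{y\in R(\Phi)}\omega(y)\bigr)\cap\partial C_x$; and \eqref{omega_R} says each $\omega(y)$ with $y\in R(\Phi)$ meets $\partial C_x$ at most in $\{x\}$, so the union does too.
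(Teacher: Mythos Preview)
Your proposal has a genuine gap that makes it circular. The fix you offer for the ``genuinely hard step'' --- invoking the limit-set dichotomy for recurrent points (Lemma~\ref{limit_set_dichotomy}) --- cannot be used here, because in the paper's logical development Lemma~\ref{limit_set_dichotomy} is proved \emph{after} and \emph{using} Proposition~\ref{sep_n_1}. Its proof repeatedly appeals to Proposition~\ref{sep_n_1} (e.g.\ to conclude $\omega(y)\cap\partial C_{x_2}=\emptyset$ and in the verification of the ``assertion''). So the last reduction you describe would beg the question. Moreover, even setting circularity aside, that reduction is only sketched: you do not explain how forward invariance of $W_x$ under $\Phi_{t_0}$ together with the dichotomy actually manufactures a recurrent point on $\partial C_x\setminus\{x\}$; the intersection $\bigcap_{n\ge0}\Phi_{nt_0}W_x$ lies in $K_x$, not on $\partial C_x$, so more would be needed.

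The paper's proof avoids all of this by exploiting the one piece of recurrence you did not use: $y$ itself is recurrent, hence the whole orbit $O(y)\subset R(\Phi)$. Assuming (after the easy case $x\in\omega(y)$) that there is a point $y_0\in\omega(y)\cap(\partial C_x\setminus\{x\})$, one first rules out $\omega(y)\subset\partial C_x$ (else $y\in\partial C_x\setminus\{x\}$ contradicts Proposition~\ref{sep_n}); so $\omega(y)$ meets ${\rm Int}C_x$ or ${\rm Int}K_x$. Strong competitiveness applied for a short time $\pm\tau$ pushes $y_0$ into ${\rm Int}K_{\Phi_{\pm\tau}(x)}$ while keeping another point of $\omega(y)$ in ${\rm Int}C_{\Phi_{\pm\tau}(x)}$; approximating these by points of $O(y)$ and using the connectedness of $O(y)$ then produces a point $z\in O(y)\subset R(\Phi)$ on $\partial C_{\Phi_{\pm\tau}(x)}\setminus\{\Phi_{\pm\tau}(x)\}$, directly contradicting Proposition~\ref{sep_n}. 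No re-run of the Proposition~\ref{sep_n} machinery is needed; the key idea you missed is to work with the connected set $O(y)$ of recurrent points rather than with the possibly non-recurrent point $z\in\omega(y)$.
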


\begin{proof}
We only need to prove (\ref{omega_R}).
To this end, we first note that if $x\in\omega(y)$, then the non-ordering property of $\omega(y)$ implies (\ref{omega_R}) immediately. Therefore, we hereafter assume that $x\notin\omega(y)$. We will consider the following three cases, respectively.

Case (i): $\omega(y)\subset\partial C_x$. We show this case actually cannot happen. In fact, since $y\in R(\Phi)$, one obtains that $y\in\omega(y)\subset\partial C_x$. But $x\notin\omega(y)$, so $y\in \partial C_x\backslash\{x\}$, which contradicts Proposition \ref{sep_n}.

Case (ii): $\omega(y)\cap\text{Int} C_{x}\neq\emptyset$.
We will show (\ref{omega_R}).
Suppose not, then one can take $y_{0}, y_{1} \in\omega(y)$ such that $x-y_{1}\in\text{Int}C$ and $x-y_{0}\in\partial C$. Since $\Phi_t$ is strongly competitive (see \textnormal{(H1)}), there is a small $\tau>0$ such that $\Phi_{\tau}(x)-\Phi_{\tau}(y_{1})\in\text{Int}C\ \text{and}\ \Phi_{\tau}(x)-\Phi_{\tau}(y_{0})\in\text{Int}K$.
Choose $z_i\in O(y)$ close to $\Phi_{\tau}(y_i)$ for $i=0,1$,
such that $\Phi_{\tau}(x)-z_{1}\in\text{Int}C$ and $\Phi_{\tau}(x)-z_{0}\in\text{Int}K$. Then, the connectedness of $O(y)$ implies that there exists some $z\in O(y)$ such that $z\in \partial C_{\Phi_{\tau}(x)}\backslash\{\Phi_{\tau}(x)\}$. Noticing that $z\in O(y)\subset R(\Phi)$, we obtain a contradiction to Proposition \ref{sep_n}. Therefore, we obtain (\ref{omega_R}).

Case (iii): $\omega(y)\cap\text{Int} K_{x}\neq\emptyset$. Similarly as Case (ii), we take $y_{0}, y_{2} \in\omega(y)$ such that $x-y_{2}\in\text{Int}K$ and $x-y_{0}\in\partial C$. Choose $\tau'<0$ small so that $\Phi_{\tau'}(x)-\Phi_{\tau'}(y_{2})\in\text{Int}K\ \text{and}\ \Phi_{\tau'}(x)-\Phi_{\tau'}(y_{0})\in\text{Int}C$.
By the same argument in Case (ii) as above, one can obtain a contradiction to Proposition \ref{sep_n}.
Thus, we have proved (\ref{omega_R}), which completes the proof.
\end{proof}

Proposition \ref{sep_n_1} helps us obtain a technical lemma, called as ``\textit{limit-set dichotomy for recurrent points}", which turns out to be crucial to obtain the ``solid" intersection principle.
For brevity, for any subsets $A,B\subset \mathbb{R}^n$, we hereafter denote $A\approx B$, which means that $a-b\in\text{Int}C$ for any $a\in A$, $b\in B$.

\begin{lem}\label{limit_set_dichotomy}
{\rm(Limit-Set Dichotomy for Recurrent Points).}
Let $x,y\in R(\Phi)$. Then, either\par
\rm(i) $\omega(x)\approx\omega(y)$; or otherwise,\par
\rm(ii) $\omega(x)\cup\omega(y)$ is unordered.
\end{lem}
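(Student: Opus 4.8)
My plan is to establish the contrapositive of (ii): assuming $\omega(x)\cup\omega(y)$ is \emph{not} unordered, I will derive (i), i.e. $\omega(x)\approx\omega(y)$. Throughout I use that, since $x,y\in R(\Phi)$, each of $\omega(x)$ and $\omega(y)$ equals the closure of the positive semiorbit of a recurrent point, hence is compact (by \textnormal{(H2)}), connected, invariant, and consists of limits of recurrent points (indeed the forward orbit of a recurrent point consists of recurrent points); and that $\omega(x)$ and $\omega(y)$ are each unordered (\cite[Theorem 3.3.2]{H95}). By the latter, the failure of (ii) produces a comparable pair with one member in $\omega(x)$ and the other in $\omega(y)$; after possibly interchanging the roles of $x$ and $y$ (which does not affect ``$\approx$'', as $\text{Int}C=-\text{Int}C$) I may assume there are $b_{0}\in\omega(y)$ and $a_{0}\in\omega(x)$ with $b_{0}<a_{0}$.

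The heart of the proof is a \emph{spreading lemma}: \emph{if $A\in\omega(x)$, $B\in\omega(y)$ and $B<A$, then $B\leq a'$ for every $a'\in\omega(x)$ and $A\geq b'$ for every $b'\in\omega(y)$.} I would prove it as follows. First, reduce to the strongly ordered case: for $t>0$, strong competitiveness gives $\Phi_{-t}(B)\ll\Phi_{-t}(A)$; once the lemma is known for strongly ordered pairs, applying it to $(\Phi_{-t}(A),\Phi_{-t}(B))$ and letting $t\to0^{+}$ (using that $C^{+}$ is closed) recovers the two weak inequalities for $(A,B)$. So assume $B\ll A$. Choose recurrent points $\Phi_{u_{k}}(x)\in O^{+}(x)$ with $\Phi_{u_{k}}(x)\to A$. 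Since $B\in\omega(y)$ and $B\ll A$, we have $A\notin\omega(y)$, hence $\Phi_{u_{k}}(x)\notin\omega(y)$ for large $k$; then Proposition~\ref{sep_n_1} (applied with the recurrent point $\Phi_{u_{k}}(x)$ and with $y$) gives $\omega(y)\cap\partial C_{\Phi_{u_{k}}(x)}=\emptyset$, so the connected set $\omega(y)-\Phi_{u_{k}}(x)$ lies in a single connected component of $\mathbb{R}^{n}\setminus\partial C$. Since $B-\Phi_{u_{k}}(x)\to B-A\in-\text{Int}C^{+}$ and $-\text{Int}C^{+}$ is such a component, for large $k$ we get $\omega(y)-\Phi_{u_{k}}(x)\subset-\text{Int}C^{+}$, i.e. $\Phi_{u_{k}}(x)\gg b'$ for all $b'\in\omega(y)$; letting $k\to\infty$ yields $A\geq b'$ for all $b'\in\omega(y)$. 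The inequality $B\leq a'$ for all $a'\in\omega(x)$ is obtained symmetrically, approximating $B$ by recurrent points on $O^{+}(y)$ and using the connectedness of $\omega(x)$.

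Granting the spreading lemma, I would finish by a short bootstrap. Applying it to $(a_{0},b_{0})$ gives $b_{0}\leq a'$ for all $a'\in\omega(x)$; since $\omega(y)$ is unordered and $b_{0}\in\omega(y)$, we must have $b_{0}\notin\omega(x)$ (otherwise $b_{0}$ and $a_{0}$ would be comparable in $\omega(x)$), so $b_{0}<a'$ for \emph{every} $a'\in\omega(x)$. Applying the spreading lemma to each pair $(a',b_{0})$ then gives $a'\geq b'$ for all $a'\in\omega(x)$ and $b'\in\omega(y)$; the same unordering argument also yields $\omega(x)\cap\omega(y)=\emptyset$, so in fact $a'-b'\in C^{+}\setminus\{0\}$ for all such pairs. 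Finally, fixing a pair $(a',b')$: by invariance $\Phi_{1}(b')<\Phi_{1}(a')$, and applying strong competitiveness to $(\Phi_{1}(a'),\Phi_{1}(b'))$ upgrades $a'-b'\in C^{+}\setminus\{0\}$ to $a'-b'\in\text{Int}C^{+}\subset\text{Int}C$. Hence $\omega(x)\approx\omega(y)$, which is (i).

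The step I expect to be the main obstacle is the spreading lemma, since one must pass from the separation of Proposition~\ref{sep_n_1} — which concerns individual recurrent points — to a uniform statement about the entire limit set $\omega(x)$, and the only tool bridging this gap is the connectedness of $\omega(y)$ (resp. $\omega(x)$). The closure step moreover loses strictness, producing only the weak relations $A\geq\omega(y)$ and $B\leq\omega(x)$; the strong relation needed for ``$\approx$'' is recovered only at the very end, globally, from invariance together with strong competitiveness. Smaller points requiring care: the approximants $\Phi_{u_{k}}(x)$ must genuinely avoid $\omega(y)$ (handled by $A\notin\omega(y)$), $-\text{Int}C^{+}$ must be a connected component of $\mathbb{R}^{n}\setminus\partial C$, and $\omega(x)\cap\omega(y)=\emptyset$ must be checked (done a posteriori from the unordering of $\omega(y)$).
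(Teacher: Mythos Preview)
Your argument is correct. The spreading lemma is sound: the key point---that $\omega(y)\cap\partial C_{\Phi_{u_k}(x)}=\emptyset$ via Proposition~\ref{sep_n_1}, together with the connectedness of $\omega(y)$ and the fact that $-\text{Int}C^{+}$ is a connected component of $\mathbb{R}^n\setminus\partial C$---goes through as you describe, and the bootstrap plus the final strong-competitiveness upgrade are clean. (One cosmetic slip: when you argue $b_0\notin\omega(x)$, the relevant unorderedness is that of $\omega(x)$, not $\omega(y)$; your parenthetical actually says this correctly.)

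The paper's proof uses the same ingredients---Proposition~\ref{sep_n_1}, connectedness of the limit sets, and strong competitiveness---but is organized differently. It first disposes of the case $\omega(x)\cap\omega(y)\neq\emptyset$ separately, and then, assuming disjointness, proves an ``assertion'' that no pair $(x_0,y_0)\in\omega(x)\times\omega(y)$ can satisfy $x_0-y_0\in\partial C$; from this it reads off the dichotomy by a connectedness argument on a single test pair. Your spreading-lemma route is arguably more streamlined: it avoids the preliminary case split on $\omega(x)\cap\omega(y)$ (you recover disjointness \emph{a posteriori}), and it packages the connectedness argument once rather than repeating it in sub-cases. The paper's route, on the other hand, makes the ``no boundary relation'' statement explicit, which is conceptually appealing and closer in spirit to the intersection principle itself. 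Both approaches invoke Proposition~\ref{sep_n_1} at exactly the same pivot (applied at a recurrent approximant to a limit point), so the difference is purely structural rather than a different idea.
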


\begin{proof}
We first show that, if $\omega(x)\cap\omega(y)\neq\emptyset$, then (ii) holds, that is, $\omega(x)\cup\omega(y)$ is unordered. For otherwise, there exist $x_{1}\in \omega(x)\backslash\omega(y)$ and $y_{1}\in \omega(y)$ such that $x_1-y_1\in\text{Int}C$. Choose $x_2\in O(x)$ close to $x_1$ such that
\begin{equation}\label{x2_y1_C}
	x_2-y_1\in\text{Int}C.
\end{equation}
Let $x_2$ be the $x$ in Proposition \ref{sep_n_1}. Then, Proposition \ref{sep_n_1} implies that $\omega(y)\cap\partial C_{x_2}=\emptyset$ (since $x_2\notin \omega(y)$). Then (\ref{x2_y1_C}) directly entails that $x_2\approx\omega(y)$. In particular, $x_2\approx\omega(x)\cap\omega(y)$.
Thus, we can find an $x_3\in\omega(x)$ such that $x_2- x_3\in\text{Int}C$, which contradicts the non-ordering of $\omega(x)$. Thus, we have proved that $\omega(x)\cup\omega(y)$ is unordered.

Next, we assume that $\omega(x)\cap \omega(y)=\emptyset$. 
We will \textit{assert that there is no points $x_{0}\in\omega(x)$ and $y_{0}\in\omega(y)$ such that $x_{0}-y_{0}\in\partial C$}. 
Before proving this assertion, we show how it implies the conclusion of this lemma.
In fact, choose any $x_1\in\omega(x)$, $y_1\in\omega(y)$. Since $\omega(x)\cap\omega(y)=\emptyset$, one has $x_1\neq y_1$. Moreover, due to \textnormal{(H1)} and the invariance of $\omega(x)$ and $\omega(y)$, we can assume without loss of generality that:
(i) $x_1-y_1\in\text{Int}C$; or otherwise, (ii) $x_1-y_1\in\text{Int}K$.

If (i) holds, then we have
\begin{equation}\label{x1_approx_omegay}
	x_1\approx\omega(y).
\end{equation}
(Otherwise, one can find $w_1\in\omega(y)$ such that $x_1-w_1\notin \text{Int}C$. Note also that $x_1-y_1\in \text{Int}C$. Then it follows from the connectedness of $\omega(y)$ that there is $w_2\in\omega(y)$ such that $x_1-w_2\in\partial C$, contradicting the assertion).
Consequently, (\ref{x1_approx_omegay}) will imply that $x'\approx\omega(y)$ for any $x'\in \omega(x)$.
(Otherwise, there exist $x'\in\omega(x)$ and $y'\in\omega(y)$ such that $x'-y'\notin \text{Int}C$. But, note $x_1-y'\in\text{Int}C$ by (\ref{x1_approx_omegay}), the connectedness of $\omega(x)$ implies that there is a $z\in\omega(x)$ such that $z-y'\in\partial C$, which contradicts the assertion). So, we have obtained that $\omega(x)\approx\omega(y)$ if (i) holds.

If (ii) holds, then similarly as (\ref{x1_approx_omegay}), one can obtain that $\{x_1\}\cup\omega(y)$ is unordered. 
Furthermore, we can show that $\{x'\}\cup\omega(y)$ is unordered for any $x'\in\omega(x)$.
(Otherwise, there exist $x'\in\omega(x)$ and $y'\in\omega(y)$ such that $x'-y'\in C$. Note also $x_1-y'\notin C$. Then, by the connectedness of $\omega(x)$, one can obtain $x_0\in\omega(x)$ such that $x_0-y'\in\partial C$, a contradiction to the assertion).
So, $\omega(x)\cup\omega(y)$ is unordered, if (ii) holds. Therefore, we have proved the lemma provided that the assertion is confirmed.

Finally, it remains to prove the assertion. Suppose that there exist $x_{0}\in\omega(x)$ and $y_{0}\in\omega(y)$ such that $x_{0}-y_{0}\in\partial C$. Then it is clear that $\omega(y)\not\subset\partial C_{x_0}$ (otherwise, $\omega(y)\subset \partial C_{x_0}$, which implies that $y\in\partial C_{x_0}$, or equivalently $x_0\in\partial C_y$,
so, we have $x_0\in\omega(x)\cap(\partial C_y\backslash\{y\})$, contradicting Proposition \ref{sep_n_1}).	
Therefore, we only need to deal with (a): $\omega(y)\cap \text{Int}C_{x_0}\neq\emptyset$; or otherwise, (b): $\omega(y)\cap\text{Int}K_{x_0}\neq\emptyset$.

If Case (a) holds. Then let $y_{1}\in\omega(y)\cap\text{Int} C_{x_{0}}$. Since $\Phi_t$ is strongly competitive, one can find a small $\tau >0$ such that $\Phi_{\tau}(x_{0})-\Phi_{\tau}(y_{1})\in\text{Int}C$ and $\Phi_{\tau}(x_{0})-\Phi_{\tau}(y_{0})\in\text{Int}K$.
Note that $\Phi_{\tau}(y_{0}), \Phi_{\tau}(y_{1})\in\omega(y)$ and $\Phi_{\tau}(x_{0})\in\omega(x)$.
Then one can choose $z_0, z_1\in O(y)$ and $x_{1}\in O(x)$ close to $\Phi_{\tau}(y_{0}),\Phi_{\tau}(y_{1})$ and $\Phi_{\tau}(x_{0})$, respectively, satisfying $x_{1}-z_{1}\in\text{Int}C$ and $x_{1}-z_{0}\in\text{Int}K$.
So, due to the connectedness of $O(y)$, there exists $z\in O(y)$ with $x_1-z\in\partial C$.
Together with $\omega(x)\cap\omega(y)=\emptyset$, it means that $x_{1}\in\partial C_{z}\backslash\{z\}$. This contradicts Proposition \ref{sep_n}, because $x_1, z\in R(\Phi)$.\par

If Case (b) holds, then choose $y_{2}\in\omega(y)\cap \text{Int} K_{x_{0}}$. There is $\tau'<0$ such that $\Phi_{\tau'}(x_{0})-\Phi_{\tau'}(y_{2})\in\text{Int}K$ and $\Phi_{\tau'}(x_{0})-\Phi_{\tau'}(y_{0})\in\text{Int}C$.
Similarly as our arguments in Case (a), we obtain a contradiction again.
Thus, we have proved the assertion and completed the proof.
\end{proof}

Now we are ready to present the following intersection principle.

\begin{prop}\label{endpoint}
{\rm(Intersection Principle for Joint Cone-Boundary).}
Let $x\in R(\Phi)$. Then $\mathcal{B}(\Phi) \cap \partial C_{x} = \{x\}$.
\end{prop}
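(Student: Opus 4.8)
The plan is to bootstrap the ``soft'' Proposition~\ref{sep_n} to the full Birkhoff center, using the limit-set dichotomy (Lemma~\ref{limit_set_dichotomy}) together with a compactness (Hausdorff-limit) argument. Suppose for contradiction that there is a point $z\in\big(\mathcal{B}(\Phi)\cap\partial C_{x}\big)\setminus\{x\}$. Since $z-x\in\partial C\setminus\{0\}=(C^{+}\cup(-C^{+}))\setminus\{0\}$, we may assume without loss of generality that $z-x\in C^{+}$ (the case $z-x\in -C^{+}$ is entirely symmetric); as $z-x$ also lies in $K=\mathbb{R}^{n}\setminus\text{Int}C$, this says $x<z$ but $x\not\ll z$. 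The first step is to read off the consequences of strong competitiveness (H1) along the two orbits: applying (H1) to the pair $(\Phi_{t}(x),\Phi_{t}(z))$ over the time $-t>0$ and noting $\Phi_{-t}(\Phi_{t}(x))=x<z=\Phi_{-t}(\Phi_{t}(z))$ gives $\Phi_{t}(x)\ll\Phi_{t}(z)$ for every $t<0$; whereas for every $t>0$ the points $\Phi_{t}(x),\Phi_{t}(z)$ are incomparable, i.e. $\Phi_{t}(z)-\Phi_{t}(x)\notin C$ --- indeed, if $\Phi_{t}(x)\le\Phi_{t}(z)$ or $\Phi_{t}(z)\le\Phi_{t}(x)$ held for some $t>0$, then (as $\Phi_t(x)\ne\Phi_t(z)$) strong competitiveness would force $x\ll z$ or $z\ll x$, contradicting $z-x\in\partial C^{+}$.

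Next, write $z=\lim_{k\to\infty}z_{k}$ with $z_{k}\in R(\Phi)$, and apply Lemma~\ref{limit_set_dichotomy} to each pair $(x,z_{k})$; by passing to a subsequence we may assume the same alternative holds for all $k$. Recall that $\omega(x)$ and $\omega(z_{k})$ are compact, connected and invariant, that $x\in\omega(x)$ and $z_{k}\in\omega(z_{k})$ by recurrence (so $\Phi_{t}(x)\in\omega(x)$ for all $t$), and that all these sets lie in the attractor $\Gamma$ by (H2). \emph{Case A}: $\omega(x)\approx\omega(z_{k})$ for all $k$. A connectedness argument on the product $\omega(x)\times\omega(z_{k})$ shows that for each $k$ either $a\ll b$ for all $a\in\omega(x),\,b\in\omega(z_{k})$, or $b\ll a$ for all such pairs. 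The second possibility forces $z_{k}\ll x$, hence $z-x\in\overline{-C^{+}}=-C^{+}$, contradicting $z-x\in C^{+}\setminus\{0\}$; so it can hold for only finitely many $k$, and after discarding them we have $a\ll b$ for all $a\in\omega(x),\,b\in\omega(z_{k})$, every $k$. \emph{Case B}: $\omega(x)\cup\omega(z_{k})$ is unordered for all $k$; then for each $a\in\omega(x),\,b\in\omega(z_{k})$ one has $b-a\in\mathbb{R}^{n}\setminus C$ when $a\ne b$ and $b-a=0$ otherwise, so in every case $b-a\in K$, i.e. $\omega(z_{k})\subset\bigcap_{a\in\omega(x)}K_{a}$.

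Now pass to a subsequential Hausdorff limit $L$ of the compact invariant sets $\omega(z_{k})$ (available since they all lie in $\Gamma$). Then $L$ is compact, connected and invariant, and $z\in L$ because $z_{k}\in\omega(z_{k})$ with $z_{k}\to z$; hence $O(z)\subset L$. In Case A, letting $k\to\infty$ in $a\ll b$ gives $\ell-a\in\overline{\text{Int}C^{+}}=C^{+}$ for all $\ell\in L,\,a\in\omega(x)$; taking $\ell=\Phi_{t}(z)\in L$ and $a=\Phi_{t}(x)\in\omega(x)$ yields $\Phi_{t}(z)-\Phi_{t}(x)\in C^{+}$ for \emph{every} $t$, contradicting the incomparability established for $t>0$. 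In Case B, letting $k\to\infty$ gives $\ell-a\in K$ for all $\ell\in L,\,a\in\omega(x)$, so $\Phi_{t}(z)-\Phi_{t}(x)\in K$ for every $t$; but for $t<0$ we have $\Phi_{t}(z)-\Phi_{t}(x)\in\text{Int}C^{+}\subset\text{Int}C$, while $K\cap\text{Int}C=\emptyset$ --- a contradiction. Either way $\mathcal{B}(\Phi)\cap\partial C_{x}=\{x\}$.

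The delicate point I expect is Case B. There the recurrent approximants $z_{k}$ are only \emph{unordered} with $x$, so no forward strict-order information survives the passage to the limit; the contradiction must instead be harvested from the \emph{backward} dynamics, where the single inequality $x<z$ propagates, via strong competitiveness, to $\Phi_{t}(z)-\Phi_{t}(x)\in\text{Int}C^{+}$ for all $t<0$. Making this rigorous hinges on two points that are easy to overlook: first, that the Hausdorff limit $L$ is \emph{invariant}, so that the whole orbit $O(z)$ --- not merely $z$ --- is trapped in $\bigcap_{a\in\omega(x)}K_{a}$; and second, the elementary identity $K=\mathbb{R}^{n}\setminus\text{Int}C$, which makes $K$ and $\text{Int}C^{+}$ disjoint and thereby turns ``$\Phi_{t}(z)-\Phi_{t}(x)\in K$'' into an outright clash with strict domination. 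One must also verify that the two subsequence extractions (for the dichotomy alternative and for the Hausdorff limit) are compatible, and note that the degenerate situation in which $\omega(x)$ is a single equilibrium is covered by the same argument verbatim.
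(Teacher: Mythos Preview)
Your argument is correct. Both your proof and the paper's hinge on the same two ingredients: the limit-set dichotomy (Lemma~\ref{limit_set_dichotomy}) applied to the pairs $(x,z_k)$ with $z_k\in R(\Phi)$, $z_k\to z$, and the forward/backward splitting coming from strong competitiveness (for $t<0$ the difference $\Phi_t(z)-\Phi_t(x)$ lands in $\text{Int}C$, for $t>0$ it lands in $\text{Int}K$). Where the two proofs diverge is in how the contradiction is extracted. You treat the two alternatives of the dichotomy as genuine cases, pass to a Hausdorff limit $L$ of the $\omega(z_k)$, and then use invariance of $L$ together with the forward/backward splitting for the \emph{limit} orbit $O(z)$ to reach a contradiction in each case. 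This works, but the Hausdorff-limit machinery is not needed.

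The paper's route is shorter: it observes that for each fixed $z_n$ the dichotomy has a \emph{single} outcome, yet one can force \emph{both} outcomes simultaneously. Choosing $\tau>0$ with $\Phi_{\tau}(x)-\Phi_{\tau}(z)\in\text{Int}K$ and $\Phi_{-\tau}(x)-\Phi_{-\tau}(z)\in\text{Int}C$ (openness lets this persist for $z_n$), the pair $\Phi_{\tau}(x)\in\omega(x)$, $\Phi_{\tau}(z_n)\in\omega(z_n)$ witnesses that the ``$\approx$'' alternative fails, hence $\omega(x)\cup\omega(z_n)$ is unordered and in particular $x-z_n\in\text{Int}K$; but the pair $\Phi_{-\tau}(x),\Phi_{-\tau}(z_n)$ witnesses that the ``unordered'' alternative fails, hence $\omega(x)\approx\omega(z_n)$ and $x-z_n\in\text{Int}C$. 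This is an immediate contradiction at the level of $z_n$, with no case split and no limit passage. Your approach buys a bit more: it shows explicitly how the order information on $\omega(z_k)$ survives in the Hausdorff limit, which is a useful observation in its own right; the paper's approach buys brevity by recognising that the dichotomy already self-destructs before one ever lets $k\to\infty$.
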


\begin{proof}
We prove this proposition by contradiction. Suppose that there is $z\in \mathcal{B}(\Phi)\cap (\partial C_{x}\backslash\{x\})$.
Since $\Phi_t$ is strongly competitive by \textnormal{(H1)}, one can find a $\tau>0$ such that $\Phi_{\tau}(x)-\Phi_{\tau}(z)\subset\text{Int}K$ and $\Phi_{-\tau}(x)-\Phi_{-\tau}(z)\subset\text{Int}C$. 

Take a sequence $\{z_n\}_{n\geq1} \subset R(\Phi)$ satisfying $z_n \rightarrow z$ as $n\rightarrow \infty$.
Without loss of generality, we can assume that $\Phi_{\tau}(x)-\Phi_{\tau}(z_{n})\in\text{Int}K$ for all $n\geq1$. 
Noticing $x,z_n\in R(\Phi)$, one has $\Phi_{\tau}(x), \Phi_{\tau}(z_n)\in R(\Phi)$. Then, by Lemma \ref{limit_set_dichotomy} for $\Phi_{\tau}(x)$ and $\Phi_{\tau}(z_n)$, we obtain that $\omega(x)\cup\omega(z_n)$ (which equals to $\omega(\Phi_{\tau}(x))\cup\omega(\Phi_{\tau}(z_n))$) is unordered.
In particular, $x-z_n\in \text{Int}K$ for any $n\geq1$.

On the other hand, we also have $\Phi_{-\tau}(x)-\Phi_{-\tau}(z_n)\subset\text{Int}C$.
Then by Lemma \ref{limit_set_dichotomy} again, we obtain that $\omega(x)\approx\omega(z_n)$ for all $n\geq1$. In particular, $x-z_n\in\text{Int}C$ for all $n\geq1$, a contradiction.
Thus, we have completed the proof.
\end{proof}

Before ending this section, we would like to present a very useful lemma, called ``\textit{connecting lemma}", which is a direct consequence of the intersection principle (Proposition \ref{endpoint}). As we will see in the following sections, it plays an extremely important role for the description of the order structure of the Birkhoff center.

\begin{lem}{\rm(Connecting Lemma).}\label{3.6_2}
Let $S\subset \mathcal{B}(\Phi)$ be any connected set and $x\in R(\Phi)$. If there exist $b,c\in S$ such that $b\in {\rm Int}C_x$ and $c\in{\rm Int}K_x$, then we must have $x\in S$.
\end{lem}
\begin{proof}
By the intersection principle (Proposition \ref{endpoint}), we note that $S\cap (\partial C_{x}\backslash\{x\}) = \emptyset$. 
So, if $x\notin S$, then $S\cap \partial C_{x} = \emptyset$. Define $S_1=S\cap \text{Int}C_x$ and $S_2=S\cap \text{Int}K_x$. Clearly, $S_1,S_2\neq\emptyset$, because $b\in S_1$ and $c\in S_2$. Moreover, $S_1$ and $S_2$ are disjoint relatively open subsets in $S$, and satisfy $S=S_1\cup S_2$. This contradicts to the connectedness of $S$.
\end{proof}

\noindent\section{Geometric Structure of the Birkhoff Center $\mathcal{B}(\Phi)$}\label{Limit dich}

In this section, we will focus on a universal location description for $\mathcal{B}(\Phi)$ and prove the main theorems. Under the standard assumptions \textnormal{(H1)-(H2)}, we will first establish a dichotomy for the order-structure of any connected component of $\mathcal{B}(\Phi)$. More precisely, we have the following

\begin{thm}\label{4.1}
Assume that \textnormal{{(H1)-(H2)}} hold. Let $B$ be a connected component of $\mathcal{B}(\Phi)$. Then one of the following alternatives must occur:\par 
{\rm (i)} $B$ is unordered; or otherwise,\par 
{\rm (ii)} $B$ consists of strongly ordered equilibria.	
\end{thm}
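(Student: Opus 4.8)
The plan is to show that if a connected component $B$ of $\mathcal{B}(\Phi)$ fails to be unordered, then it must be a strongly ordered set of equilibria. So suppose there exist $p,q\in B$ with $p<q$ (but $p\neq q$). The first step is to upgrade this weak order relation to a strong one using the intersection principle. Indeed, pick any recurrent point $x$ (recurrent points are dense in $B$, and $B$ consists entirely of limits of recurrent points). The key observation is that, by Proposition~\ref{endpoint}, $\partial C_x$ meets $\mathcal{B}(\Phi)$ only at $x$; since $B\subset\mathcal{B}(\Phi)$ is connected, $B\setminus\{x\}$ lies entirely in one of the two open sets $\mathrm{Int}\,C_x$ or $\mathrm{Int}\,K_x$. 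Applying this with $x$ ranging over recurrent points near $p$ and near $q$, together with the connecting lemma (Lemma~\ref{3.6_2}), should force: for any two points $u,v\in B$, they are \emph{not} separated by a joint cone-boundary of any recurrent point strictly between them, which will let me conclude $u-v\in\mathrm{Int}\,C\cup\mathrm{Int}\,K$ whenever $u\neq v$ — in other words, any two distinct points of $B$ are related by $\ll$ or are unrelated. Since we assumed $B$ is not unordered, there is at least one related pair; I then need to propagate the strong order relation across all of $B$ by connectedness.

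The second step is the propagation: show that if $p\ll q$ with $p,q\in B$, then in fact \emph{every} pair of distinct points of $B$ is strongly ordered. Fix $p\ll q$. For an arbitrary $w\in B$, I want to show $w$ is comparable (by $\ll$) to both $p$ and $q$. Consider the set of $w\in B$ strongly below $q$, the set strongly above $q$, and note these are relatively open in $B$; if every $w\neq q$ is strongly comparable to $q$ (which the first step gives us, once we know $B$ is not everywhere unordered near $q$), then $B\setminus\{q\} = (B\cap[[-\infty,q]])\sqcup(B\cap[[q,+\infty]])$, a disjoint union of two relatively open sets. Connectedness of $B$ would force one of them to be empty — but $p$ lies in the first — so $B\setminus\{q\}\subset[[-\infty,q]]$, i.e. $q$ is a strict supremum. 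Repeating this with $q$ replaced by an arbitrary point of $B$ shows $B$ is totally ordered by $\ll$ (equivalently, $B$ is an ordered arc-like set where distinct points are strongly ordered). Here I will need to be careful that "strongly comparable to $q$" holds for \emph{all} of $B\setminus\{q\}$ and not merely locally; this is exactly where the intersection principle applied at the recurrent points approximating $q$ (or $w$) does the work, since $\partial C_q$ (resp. $\partial C_w$) separates $\mathbb{R}^n$ into $\mathrm{Int}\,C_q$ and $\mathrm{Int}\,K_q$ and $B$ avoids the boundary except at the base point.

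The third step is to show that a strongly ordered component $B$ consists of equilibria. Suppose $B$ is strongly ordered and let $x\in B$ be a recurrent point; I claim $x\in E$. Since $x$ is recurrent, $x\in\omega(x)$, and $\omega(x)$ is a nonempty compact connected invariant subset of $\mathcal{B}(\Phi)$, hence $\omega(x)\subset B$. But $\omega(x)$ is unordered (this is the classical Hirsch result, \cite[Theorem 3.3.2]{H95}), while $B$ is strongly ordered; the only way a subset of a strongly ordered set can be unordered is if it is a single point. Therefore $\omega(x)=\{x\}$, so $x$ is an equilibrium. Thus every recurrent point of $B$ is an equilibrium, and since $E$ is closed, $B=\overline{R(\Phi)\cap B}\subset E$. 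Combined with strong orderedness, alternative (ii) holds.

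The main obstacle I anticipate is the first/second step: turning the purely topological "a connected set avoiding $\partial C_x$ lies on one side" statement into the global assertion that \emph{any} two distinct points of $B$ are strongly ordered or unordered, and then the dichotomy "if one pair is related, all are." The subtlety is that the intersection principle is only available at recurrent base points $x$, whereas we want to compare arbitrary points of $B$ (which are only limits of recurrent points); so I expect to need a limiting argument — approximate a point $u\in B$ by recurrent points $u_k$, use Proposition~\ref{endpoint} at each $u_k$ to place $B$ on one side of $\partial C_{u_k}$, and pass to the limit, watching that the open cones $\mathrm{Int}\,C_{u_k}$ and $\mathrm{Int}\,K_{u_k}$ converge appropriately and that no point of $B$ gets stranded on $\partial C_u$ (which is where one would again invoke Proposition~\ref{endpoint}, now directly at $u$ if $u$ happens to be recurrent, or a further density/closure argument otherwise). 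The connecting lemma (Lemma~\ref{3.6_2}) is the tool that rules out $B$ straddling both $\mathrm{Int}\,C_x$ and $\mathrm{Int}\,K_x$ without containing $x$, so it will be invoked repeatedly to glue the local one-sidedness conclusions into the global order structure.
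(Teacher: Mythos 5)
Your step 3 (a strongly ordered component consists of equilibria, because for a recurrent $x\in B$ one has $\omega(x)=\overline{O(x)}\subset B$ and $\omega(x)$ is unordered, hence a singleton) is correct and in fact slicker than the paper's projection argument. But the heart of your proposal, steps 1--2, contains a genuine gap. Your key inference is: ``since $B$ is connected and, by Proposition~\ref{endpoint}, meets $\partial C_x$ only at $x$, the set $B\setminus\{x\}$ lies entirely in $\mathrm{Int}\,C_x$ or entirely in $\mathrm{Int}\,K_x$.'' This is false as a topological statement: deleting the single point $x$ may disconnect $B$, and the resulting pieces may lie on different sides of $\partial C_x$ (picture a connected set through $x$ with one branch entering $\mathrm{Int}\,C_x$ and another entering $\mathrm{Int}\,K_x$). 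The connecting lemma (Lemma~\ref{3.6_2}) does not forbid this straddling; it only says that if $B$ has points on both sides of $\partial C_x$ for a recurrent $x$, then $x\in B$. Excluding precisely this straddling configuration is the actual content of the theorem, and the paper's proof spends all its effort there: assuming $B$ contains both an ordered and an unordered pair, it produces $a,b,c\in B$ with $a-b\in\mathrm{Int}\,C$ and $a-c\in\mathrm{Int}\,K$, takes a recurrent point $x$ near $a$ (forced into $B$ by the connecting lemma), splits $B=B_C\cup B_K$ with $B_C\cap B_K=\{x\}$, proves each piece is connected, and then chooses a \emph{second} recurrent point $z$ near a point of $B_K$ close to $x$ and applies the connecting lemma to $S=B_C$ to reach a contradiction. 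Nothing in your outline performs this step; you assume its conclusion. Your propagation step 2 is also flawed: writing $B\setminus\{q\}$ as a disjoint union of two nonempty relatively open sets does not contradict connectedness of $B$ (compare $[0,1]$ with the midpoint removed), and the conclusion you extract (every $q\in B$ is a strict supremum of $B$) cannot be correct for an ordered arc in any case.

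A secondary problem is your repeated use of ``recurrent points are dense in $B$'' (and $B=\overline{R(\Phi)\cap B}$ in step 3). What is automatic from $B\subset\mathcal{B}(\Phi)=\overline{R(\Phi)}$ is only that every point of $B$ is a limit of recurrent points of the flow, not that these recurrent points can be chosen inside $B$; a component of a closure need not contain a dense set of recurrent points. The paper's dichotomy argument avoids this by taking recurrent points merely \emph{near} chosen points of $B$ and using the connecting lemma to force them into $B$ when needed (it does invoke a density-type statement only in the final equilibria step). Your limiting-argument sketch at the end correctly identifies that Proposition~\ref{endpoint} is available only at recurrent base points, but it does not resolve the straddling issue above, which is the step on which the whole dichotomy rests.
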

\noindent The proof of Theorem \ref{4.1} will be presented in Subsection \ref{subsec_4.1}.
Moreover, in order to locate the precise positioning of the unordered connected component in Theorem \ref{4.1}(i), we introduce more definitions.

Let $p\in E$ be an equilibrium for $\Phi_t$. A subset $R(p)\subset \mathbb{R}^n$ is called the \textit{basin of repulsion} of $p$ if $$R(p)=\{x\in\mathbb{R}^{n}:\lim_{t\rightarrow\infty}\Phi_{-t}(x)=p\}.$$
We call the subset $$R_{-}(p)=\{x\in R(p): \Phi_{-t}(x)\ll p \text{ for } t>0 \text{ sufficiently large}\}$$ the \textit{basin of lower repulsion} of $p$, and $$R_{+}(p)=\{x\in R(p): p\ll\Phi_{-t}(x) \text{ for } t>0 \text{ sufficiently large}\}$$ the \textit{basin of upper repulsion} of $p$.
The global attractor $\Gamma$ in \textnormal{(H2)} can be characterized as the set of points with bounded orbits, that is, $x\in\mathbb{R}^n\backslash\Gamma$ if and only if $\lim_{t\to\infty}|\Phi_{-t}(x)|=\infty$. Define $$R(\infty)=\{x\in \mathbb{R}^{n}:\lim_{t\to\infty}|\Phi_{-t}(x)|=\infty\},$$ i.e., $R(\infty)=\mathbb{R}^n\backslash\Gamma$.

Fix $v\gg0$, we hereafter denote by $\pm\infty$ the ``elements" which satisfy that $$-\infty\ll tv\ll+\infty,\quad \text{ for any } t\in\mathbb{R}.$$ 

The supremum $\sup{S}$ of a subset $S\subset\mathbb{R}^n$, if it exists, is the minimal point $a$ such that $a\geq S$. The infimum $\inf S$ is defined dually.
We write 
\begin{equation}\label{x_star}
	x_{\star}=\inf \Gamma\ \text{ and }\ \ x^{\star}=\sup \Gamma.
\end{equation}
Define
\begin{equation}\label{R_tilde-}
	{\widetilde{R}}_-(+\infty)=\{x\in R(\infty): x_\star\ll\Phi_{-t}(x) \text{ for } t>0 \text{ sufficiently large}\}
\end{equation}
and $R_-(+\infty)=\text{Int}(\widetilde{R}_-(+\infty))$. Similarly, define 
\begin{equation*}\label{R_tilde+}
	{\widetilde{R}}_+(-\infty)=\{x\in R(\infty): \Phi_{-t}(x)\ll x^\star \text{ for } t>0 \text{ sufficiently large}\}
\end{equation*}
and $R_+(-\infty)=\text{Int}(\widetilde{R}_+(-\infty))$. Clearly, 
\begin{equation}\label{xstar_belong_R-infty}
[[x^{\star},+\infty]]\subset R_-(+\infty)\ \ \text{ and }\ \ [[-\infty,x_{\star}]]\subset R_+(-\infty).
\end{equation}

An open \textit{$(n-1)$-cell} in $\mathbb{R}^n$ is a subset homeomorphic to $\mathbb{R}^{n-1}$.
Given a sequence $\{u_i\}_{i\in\mathbb{N}}\subset\mathbb{R}^n$, we write \textit{$u_i\downarrow z$} if $u_i$ converges to $z$ with $z\ll u_i$, and write \textit{$u_i\uparrow z$} if $u_i$ converges to $z$ with $u_i\ll z$.
A point $z$ is in the \textit{lower boundary $\partial_{-} S$} of a set $S\subset \mathbb{R}^{n}$ provided that there is a sequence $\{u_i\}_{i\in\mathbb{N}}$ in $S$ such that $u_i\downarrow z$, but no sequence $\{v_i\}_{i\in\mathbb{N}}$ in $S$ such that $v_i\uparrow z$. The \textit{upper boundary $\partial_{+} S$} is defined analogously. For $p\in E$, let $$M_{-}(p)=\partial_{-}R_{-}(p)\ \ \text{ and }\ \ M_{+}(p)=\partial_{+}R_{+}(p),$$ 
where $M_{-}(p)$ (resp. $M_+(p)$) is called the \textit{lower} (resp. \textit{upper}) \textit{boundary} of $R_-(p)$ (resp. $R_+(p)$).
Analogously, let $$M_{-}(+\infty)=\partial_{-}R_{-}(+\infty)\ \ \text{ and }\ \  M_{+}(-\infty)=\partial_{+}R_{+}(-\infty).$$ 
More detailed properties of the subsets $M_{\pm}(p)$, $M_{-}(+\infty)$ and $M_{+}(-\infty)$ will be presented in Subsection \ref{subsec_4.2} (c.f. Proposition \ref{prop_of_cells_infinite}).
Roughly speaking, these subsets are unordered invariant open $(n-1)$-cells. 
We point out that the subsets $M_{\pm}(p)$ are essentially due to Hirsch \cite[Theorem 4.2]{H88-3}; while, the subsets $M_{-}(+\infty)$ and $M_{+}(-\infty)$ are newly defined unordered invariant open $(n-1)$-cells.

\begin{thm}\label{4.2}
	Let all the hypotheses in Theorem \ref{4.1} hold. Then for any unordered connected component $B$, there exist $p\in E\cup\{-\infty\}$ and $q\in E\cup\{+\infty\}$ such that 
	\begin{equation*}
		B\subset M_{+}(p)\cap M_{-}(q),
	\end{equation*}
	where $M_{+}(p)$ and $M_{-}(q)$ are invariant, unordered open $(n-1)$-cells.
\end{thm}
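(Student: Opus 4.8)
The strategy is to trap $B$ between the lower boundary of a single basin of lower repulsion and the upper boundary of a single basin of upper repulsion; by the symmetry $C^+\leftrightarrow -C^+$ I only describe the construction of $q$ and the cell $M_-(q)$, the construction of $p$ and $M_+(p)$ being completely dual. First, some reductions. By \textnormal{(H2)} every recurrent point lies in $\Gamma$ (as $x\in\omega(x)\subset\Gamma$), so $\mathcal{B}(\Phi)\subset\Gamma$; hence $B$, being a closed connected subset of the compact set $\Gamma$, is compact, and it is invariant since $R(\Phi)$ is invariant and each $\Phi_t$ is a homeomorphism fixing every component of $\mathcal{B}(\Phi)$. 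Being unordered, $B\subset[x_\star,x^\star]$; in particular every $b\in B$ is approached from above by points of $[[b,+\infty]]$, and $x^\star+\varepsilon v\gg B$ for all $v\gg0$ and $\varepsilon>0$.

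Next I construct $q$. Given $b\in B$, pick $z\gg b$ close to $b$; by monotonicity of $\Phi_{-t}$ and invariance of $B$, the backward orbit $\Phi_{-t}(z)$ stays strictly above a point of $B$ for every $t\ge0$. The absorbing principle for the Birkhoff center (Lemma \ref{k_2}), together with the order dichotomy (Theorem \ref{4.1}), pins down the fate of this backward orbit: either $\Phi_{-t}(z)$ converges as $t\to+\infty$ to a single equilibrium $q\in E$ with $q\ge B$, or $|\Phi_{-t}(z)|\to\infty$. In the second case $\Phi_{-t}(z)\gg x_\star$ eventually (since $\Phi_{-t}(z)$ lies above a point of $B\ge x_\star$), and the same holds on a whole ball about $z$, so $z\in\text{Int}(\widetilde{R}_-(+\infty))=R_-(+\infty)$; we then set $q=+\infty$. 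In either case $z\in R_-(q)$.

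The crux is the \emph{coherence of $q$}: the element $q\in E\cup\{+\infty\}$ produced above depends neither on $b\in B$ nor on the choice of $z$ just above $B$. The plan is to show that the assignment $b\mapsto q$ is locally constant on $B$ and conclude by connectedness. A naive limit-set argument fails here because $\mathcal{B}(\Phi)$ cannot be analysed orbit-by-orbit; instead one confronts two competing values $q\ne q'$ arising from nearby base points. Using the pairwise disjointness of the cells of $\mathcal{F}$ (Proposition \ref{prop_of_cells_infinite}, the analogue of Hirsch \cite[Theorem 4.2]{H88-3}) together with the intersection principle (Proposition \ref{endpoint}) and the connecting lemma (Lemma \ref{3.6_2}), I would then split $B$ into two nonempty disjoint relatively open pieces, contradicting its connectedness. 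This is precisely where Hirsch's absorbing principle for $\omega$-limit sets has to be upgraded to its Birkhoff-center version, and I expect this coherence step to be the main obstacle of the proof; it succeeds only because Proposition \ref{endpoint} and Lemma \ref{3.6_2} allow one to handle the whole component $B$ simultaneously rather than one orbit's limit set at a time.

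Finally, fix this common $q$. By the previous steps every point lying just above $B$ and close to it belongs to $R_-(q)$, so each $b\in B$ is approached from above by points of $R_-(q)$. Conversely, a point $v$ lying just below $B$ and close to it cannot belong to $R_-(q)$: by the dual absorbing principle $\Phi_{-t}(v)\ll B$ for all $t\ge0$, whereas membership in $R_-(q)$ would force the backward orbit of $v$ to converge to $q$ (resp. to escape with $x_\star\ll\Phi_{-t}(v)$, when $q=+\infty$), i.e. to end up above $B$ — impossible unless $B=\{q\}$, which cannot happen for a genuinely unordered $B$. Hence $B$ is approached by $R_-(q)$ from above but not from below, i.e. $B\subset\partial_-R_-(q)=M_-(q)$. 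The dual argument, applied to points just below $B$, produces $p\in E\cup\{-\infty\}$ with $B\subset M_+(p)$. Since $M_-(q)$ and $M_+(p)$ are invariant, unordered open $(n-1)$-cells by Proposition \ref{prop_of_cells_infinite}, we obtain $B\subset M_+(p)\cap M_-(q)$, which completes the proof.
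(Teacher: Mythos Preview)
Your overall strategy matches the paper's, but two steps are genuine gaps rather than routine details.

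The first is the claimed dichotomy for the backward orbit of an \emph{arbitrary} $z\gg b$: that $\Phi_{-t}(z)$ either converges to a single equilibrium $q\ge B$ or escapes. Neither Lemma~\ref{k_2} nor Theorem~\ref{4.1} yields this; when $z\in\Gamma$, the set $\alpha(z)$ is an $\omega$-limit set of the time-reversed (strongly cooperative) flow and need not reduce to a singleton. The paper fills this hole in Lemma~\ref{k_1} by invoking the Generic Convergence Theorem for cooperative flows \cite[Theorem~1.4.3]{H95} to select a \emph{generic} sequence $z_n\downarrow x$ with $\alpha(z_n)\subset E$, and then exploits that $x\in\mathcal{B}(\Phi)$ is non-wandering (choosing $x_i\to x$, $t_i\to\infty$ with $\Phi_{t_i}(x_i)\to x$, and applying monotonicity to $\Phi_{t_i}(x_i)\ll z_n$) to extract $q_n\in\alpha(z_n)\cap E$ with $x\le q_n$. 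Without this, you have no equilibrium $q$ to work with.

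The second is the coherence of $q$, which you explicitly leave as a plan. The paper's argument is concrete and different from your disjoint-cells/splitting idea: set $q_0=\sup B$ and use the Monotone Criterion to get $\Phi_{-t}(q_0)\uparrow q=\inf E^B$ where $E^B=\{e\in E:B\ll e\}$. Choosing the $z_n$ above with $z_n\ll q$ forces $q_n\le q$; Lemma~\ref{k_2} gives $B\ll q_n$, i.e.\ $q_n\in E^B$, hence $q_n\ge q$; so $q_n=q$ for every $n$ and every $x$ in a dense subset $B_1\subset B$, and Proposition~\ref{prop_of_cells_infinite}(c) closes to $B\subset M_-(q)$. A separate lemma (Lemma~\ref{B1B2}) shows the ``bounded'' and ``unbounded'' alternatives cannot coexist on $B$. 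A minor additional slip in your last paragraph: monotonicity only gives $\Phi_{-t}(v)\ll\Phi_{-t}(b)$ for the specific $b\gg v$, not $\Phi_{-t}(v)\ll B$; the paper avoids this issue entirely by using the order-convexity of $R_-(q)$ (Proposition~\ref{prop_of_cells_infinite}(a)) to rule out approach from below.
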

\noindent The proof of Theorem \ref{4.2} will be postponed in Subsection \ref{subsec_4.2}.

\vskip 3mm
\noindent\textbf{Remark 4.1.} 
In view of Theorem \ref{4.2}, compared with Hirsch’s construction for Kolmogorov competitive systems (which are only restricted in the first quadrant of $\mathbb{R}^n$), new members have been collected into the new family $\mathcal{F}$ satisfying Theorem \ref{Thm_2.1}. 

\vskip 3mm
To define $\mathcal{F}=\{M_i\}$ of invariant open $(n-1)$-cells in Theorem \ref{Thm_2.1}, let $M_i=M_-(q_i)\neq\emptyset$ for some $q_i\in E\cup\{+\infty\}$. Then, the elements of $\mathcal{F}$ are pairwise disjoint in $\mathbb{R}^n$ (see Propositions \ref{prop_of_cells_infinite}(b) in Subsection \ref{subsec_4.2}). 
Given that these $\{q_i\}$ are mutually isolated,
it can be inferred that the cardinality of $\mathcal{F}$ is at most countable. In addition, Theorem \ref{Thm_2.1} is a combination of Theorem \ref{4.1} and Theorem \ref{4.2}.

\vskip 3mm
\noindent\textbf{Remark 4.2.} 
As a matter of fact, one can alternatively define in Theorem \ref{Thm_2.1} the family $\mathcal{F} = \{M'_j\}$, where each $M'_j=M_{+}(p_j)\neq\emptyset$ for some $p_j\in E\cup\{-\infty\}$. The elements of $\{M'_j\}$ are pairwise disjoint and the cardinality is at most countable.
\vskip 3mm

\subsection{Proof of Theorem \ref{4.1}.}\label{subsec_4.1}
The proof of Theorem \ref{4.1} heavily depends on the intersection principle (Proposition \ref{endpoint}) and the connecting lemma (Lemma \ref{3.6_2}), which have been established in Section \ref{s3}. We give the details as follows:
\begin{proof}[Proof of Theorem \ref{4.1}]
We first show that $B$ satisfies the order-dichotomy, that is, $B$ is either (i) unordered; or otherwise, (ii) totally ordered.

Suppose that $B$ is neither unordered nor totally ordered. Then there are points $w,x,y,z \in B$ such that $w-x\in C$ and $y-z\in K$. 
By \textnormal{(H1)}, one has $\Phi_{-\tau}(w)-\Phi_{-\tau}(x)\in\text{Int}C$ and $\Phi_{\tau}(y)-\Phi_{\tau}(z)\in\text{Int}K$ for some $\tau>0$.
Based on this, we \textit{assert that one can find three distinct points $a,b,c\in B$ satisfying}
\begin{equation}\label{choose_pts}
	a-b\in\text{Int}C\ \ \ and\ \ \ a-c\in\text{Int}K.
\end{equation}
In fact, if $\Phi_{-\tau}(w)-\Phi_{\tau}(y)\in\text{Int}K$, then we choose $a=\Phi_{-\tau}(w)$, $b=\Phi_{-\tau}(x)$ and $c=\Phi_{\tau}(y)$. While, if $\Phi_{-\tau}(w)-\Phi_{\tau}(y)\in C$, we can take $a=\Phi_{\tau-s}(y)$, $b=\Phi_{-\tau-s}(w)$ and $c=\Phi_{\tau-s}(z)$ for small $s\in(0,\tau)$, which all satisfy the assertion. 

Now, by $(\ref{choose_pts})$, choose an $x\in R(\Phi)$ so close to $a$ that $x-b\in \text{Int}C$ and $x-c\in \text{Int}K$. So, we have $B\cap \partial C_{x}\neq \emptyset$, because $B$ is connected. Then it follows from the connecting lemma (Lemma \ref{3.6_2}) that $x\in B$.

Let $B_{C}=B\cap C_{x}$ and $B_{K}=B\cap K_{x}$. Then, $B_{C}$ and $B_{K}$ are clearly closed in $B$, and $B=B_{C}\cup B_{K}$. Moreover, $B_{C}\neq\emptyset$ and $B_{K}\neq\emptyset$, since $b\in B_{C}$, $c\in B_{K}$. 
By virtue of Proposition \ref{endpoint}, we have $B_{C}\subset\text{Int}C_x\cup\{x\}$ and $B_{K}\subset\text{Int}K_x\cup\{x\}$. Hence, $B_{C}\cap B_{K}=\{x\}$, since $B$ is connected.

Furthermore, we will show that $B_{C}$ and $B_{K}$ are both connected. 
We only prove that $B_C$ is connected. The proof of $B_K$ is analogous. Suppose $B_C$ is not connected. Then there are two disjoint nonempty closed subsets $C_{1}$ and $C_{2}$ such that $B_{C}=C_{1}\cup C_{2}$ and $x\in C_{1}$. 
Noticing that $C_{2}\cap B_{K}=\emptyset$,
we have $(C_{1}\cup B_{K})\cap C_{2}=\emptyset$.
Consequently, one can obtain two nonempty disjoint closed subsets $(C_{1}\cup B_{K})$ and $C_{2}$ of $B$ such that $B=(C_{1}\cup B_{K})\cup C_{2}$, which contradicts the connectedness of $B$. Thus, we have proved that $B_C$ is connected.

Now, since $B_{K}$ is connected and $c\in B_{K}$, one can choose some $y\in B_{K}$ close to $x$ such that $y-b\in \text{Int}C$ and $y-x\in \text{Int}K$. Again, choose some $z\in R(\Phi)$ close to $y$ such that $z-b\in \text{Int}C$ and $z-x\in \text{Int}K$. So, by taking $S$, $b$ and $c$ as $B_C$, $b$ and $x$, respectively, in Lemma \ref{3.6_2}, we can obtain that $z\in B_{C}$. This contradicts $z-x\in \text{Int}K$. Thus, we have proved that $B$ satisfies the order-dichotomy.

Finally, we will show that if $B$ is totally ordered, then it must consist of equilibria.
As a matter of fact, since $B$ is totally ordered, it is homeomorphic to a 1-dimensional compact invariant set. 
To see this, let $v\in\mathbb{R}^n$ satisfy $v\gg0$ and ${\rm span}\{v\}=\{\lambda v:\lambda\in\mathbb{R}\}$. Define $Q$ as the projection from $B$ onto ${\rm span}\{v\}$, that is, $Qx = (x\cdot v)v$ for any $x\in B$, where ``$\cdot$" is the scalar product in $\mathbb{R}^n$. Then, $Q: B\to Q(B)\subset {\rm span}\{v\}$ is a one-to-one mapping (otherwise, $B$ contains two distinct points $x_1$ and $x_2$ that satisfy $(x_1-x_2)\cdot v = 0$, which implies that $x_1$, $x_2$ are not related by ``$\leq$", a contradiction to the ordering of $B$). Consequently, $B$ is homeomorphic to a compact connected invariant subset of ${\rm span}\{v\}$.
Moreover, due to the connectedness of $B$, it is homeomorphic to $[0,1]$, we denote the homeomorphism by $h: B\to[0,1]$.
As $B$ is invariant, there exists a flow $\Psi_t$ on $[0,1]$ such that $\Psi_t\circ h=h\circ\Phi_t|_B$. 
Since the recurrent points of $\Psi_t$ in $[0,1]$ are all equilibria, one obtains that any recurrent point of $\Phi_t$ in $B$ is an equilibrium.
Note also that $B$ is a connected component. Then, any point in $B$ can only be approached by the recurrent points in $B$ (which are all equilibria). Therefore, $B$ consists of equilibria. We have completed the proof.
\end{proof}

\subsection{Proof of Theorem \ref{4.2}.}\label{subsec_4.2}
In order to prove Theorem \ref{4.2}, we first give the following proposition to describe the crucial properties of the $(n-1)$-cells mentioned in Theorem \ref{4.2}.

\begin{prop}\label{prop_of_cells_infinite}
	Let $q\in E\cup\{+\infty\}$. Then\par
	{\rm(a)} $R_{-}(q)$ is order-convex  and invariant;\par
	{\rm(b)} $M_{-}(q)$ is an invariant, unordered open $(n-1)$-cell; and moreover, 
	\begin{equation*}
		M_{-}(q)\cap M_{-}(q')=\emptyset,\quad {\rm for\ any\ distinct }\ \ q,q'\in E\cup\{+\infty\};
	\end{equation*}
	
	{\rm(c)} if $x\in\overline{M_{-}(q)}$ and $x\leq q$, then $x\in M_{-}(q)$.\\
	Corresponding results hold for $R_{+}(p)$ and $M_{+}(p)$ for $p\in E\cup\{-\infty\}$.
\end{prop}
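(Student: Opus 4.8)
The plan is to prove (a)--(c) for $R_-(q)$ and $M_-(q)$ with $q\in E\cup\{+\infty\}$; the statements for $R_+(p)$, $M_+(p)$ then follow by applying these to the conjugacy $x\mapsto -x$, which carries $\Phi_t$ to a competitive flow and swaps $C^+\leftrightarrow -C^+$, $\Gamma\leftrightarrow -\Gamma$, $x_{\star}\leftrightarrow -x^{\star}$, $+\infty\leftrightarrow -\infty$ and $(R_-(q),M_-(q))\leftrightarrow(R_+(p),M_+(p))$. For $q\in E$ all three items are, modulo the passage from the Kolmogorov setting of \cite{H88-3} to $\RR^n$, contained in Hirsch \cite[Theorem~4.2]{H88-3}; so the genuinely new work is the case $q=+\infty$, which I will highlight. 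Throughout I use: $\Phi_s$ with $s<0$ is monotone and, by \textbf{(H1)}, strongly monotone ($x<y\Rightarrow\Phi_s x\ll\Phi_s y$); a closed pointed convex cone in $\RR^n$ is normal, so $0\le u_k\le w_k$ with $w_k\to0$ forces $u_k\to0$; and, by \textbf{(H2)}, $\Gamma$ is exactly the set of points with bounded full orbit.

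\emph{Item (a).} Invariance of $R_-(q)$, $q\in E$, is immediate because the defining conditions ``$\Phi_{-t}(x)\to q$'' and ``$\Phi_{-t}(x)\ll q$ eventually'' are unchanged when $x$ is replaced by $\Phi_s(x)$; for $q=+\infty$ one checks the same for $\widetilde R_-(+\infty)$ in \eqref{R_tilde-}, so $R_-(+\infty)=\mathrm{Int}\,\widetilde R_-(+\infty)$ is invariant since $\Phi_s$ is a homeomorphism. For order-convexity, let $x\le z\le y$ with $x,y\in R_-(q)$; backward monotonicity gives $\Phi_{-t}(x)\le\Phi_{-t}(z)\le\Phi_{-t}(y)$ for $t\ge0$, normality forces $\Phi_{-t}(z)\to q$, and $\Phi_{-t}(z)\le\Phi_{-t}(y)\ll q$ eventually gives $\Phi_{-t}(z)\ll q$ eventually, so $z\in R_-(q)$. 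For $q=+\infty$ one argues the same for $\widetilde R_-(+\infty)$ after the following observation: if $x,y\in\widetilde R_-(+\infty)$, $x\le z\le y$ and $z\in\Gamma$, then $\Phi_{-t}(z)\le x^{\star}$ for all $t$ by invariance of $\Gamma$ and \eqref{x_star}, hence $\Phi_{-t}(x)\le x^{\star}$, which together with $x_{\star}\ll\Phi_{-t}(x)$ eventually confines $\{\Phi_{-t}(x)\}_{t\ge0}$ to the compact order interval $[x_{\star},x^{\star}]$ and forces the absurd $x\in\Gamma$. Thus $\widetilde R_-(+\infty)$ is order-convex, and the interior of an order-convex set is order-convex (perturb the endpoints inward along $\pm v$, $v\gg0$).

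\emph{Item (b).} Fix $v\gg0$, let $H=v^{\perp}$ and $\ell_w=w+\RR v$. \emph{Vertical-line structure and invariance of $M_-(q)$.} For $q=+\infty$: $R_-(+\infty)$ is open and order-convex, so $R_-(+\infty)\cap\ell_w$ is an open interval; by \eqref{xstar_belong_R-infty} the line $\ell_w$ enters the order-upward set $[[x^{\star},+\infty]]\subset R_-(+\infty)$ at all large heights, while $[[-\infty,x_{\star}]]\subset R_+(-\infty)$ together with $R_+(-\infty)\cap R_-(+\infty)=\emptyset$ (a common point would have bounded backward orbit, contradicting $R(\infty)$-membership) shows $\ell_w$ leaves $R_-(+\infty)$ at all small heights; hence $R_-(+\infty)\cap\ell_w=\{w+sv:s>a_w\}$ with $a_w\in\RR$ finite. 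Therefore $\partial R_-(+\infty)\cap\ell_w=\{w+a_wv\}$, and this point is in $\partial_-R_-(+\infty)$ (if instead some $v'\in R_-(+\infty)$ had $v'\uparrow w+a_wv$, then $v'\le w+a_wv\le w+(a_w+1)v$ with $v',w+(a_w+1)v\in R_-(+\infty)$ would give $w+a_wv\in R_-(+\infty)$ by order-convexity, impossible). So $M_-(+\infty)=\partial R_-(+\infty)$: it is closed, and, being the topological boundary of the invariant open set $R_-(+\infty)$, invariant. For $q\in E$ the invariance and the analogous graph-over-$H$ description are part of \cite[Theorem~4.2]{H88-3}. \emph{Unorderedness.} Suppose $z_1<z_2$ in $M_-(q)$; applying $\Phi_{-1}$ (strongly monotone, preserving the invariant $M_-(q)$) we may assume $z_1\ll z_2$. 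Pick $w_j\downarrow z_1$ and $u_i\downarrow z_2$ in $R_-(q)$; for large $j$ one has $w_j\ll z_2$ (as $z_2-w_j\to z_2-z_1\in\mathrm{Int}\,C^+$), and $z_2\ll u_i$, so $w_j\le z_2\le u_i$ with $w_j,u_i\in R_-(q)$, whence $z_2\in R_-(q)$ by order-convexity — contradicting that $R_-(q)$ is open and disjoint from its lower boundary. \emph{Cell and disjointness.} An unordered set meets each $\ell_w$ at most once and is a Lipschitz graph over its projection to $H$; for $q=+\infty$ this projection is all of $H$, so $M_-(+\infty)$ is homeomorphic to $H\cong\RR^{n-1}$, i.e.\ an open $(n-1)$-cell (for $q\in E$, again \cite[Theorem~4.2]{H88-3}). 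If $z=w+cv\in M_-(q)\cap M_-(q')$ with $q\ne q'$, then, since $z$ is the bottom point of $R_-(q)$ (resp.\ $R_-(q')$) on $\ell_w$, a small one-sided interval above $z$ on $\ell_w$ lies in $R_-(q)\cap R_-(q')$; but $R(q)\cap R(q')=\emptyset$ (backward limits are unique, and $R(\infty)\cap R(q')=\emptyset$ since $R(q')\subset\Gamma$) gives $R_-(q)\cap R_-(q')=\emptyset$, a contradiction.

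\emph{Item (c) and the main difficulty.} For $q=+\infty$, (c) is immediate because $M_-(+\infty)=\partial R_-(+\infty)$ is closed. For $q\in E$, $R_-(q)$ may additionally carry an upper boundary, so $\partial R_-(q)$ splits along each $\ell_w$ into a lower point (in $\partial_-R_-(q)$) and possibly an upper one; one shows $x\in\overline{M_-(q)}\subset\partial R_-(q)$ (as $x\notin R_-(q)$, the latter being open and disjoint from $M_-(q)$) and uses the hypothesis $x\le q$ to rule out the upper alternative — an upper-boundary point is $\uparrow$-approached by $R_-(q)$-points whose backward orbits converge to $q$ from strictly below, and this together with $x\le q$ and order-convexity would force $x\in R_-(q)$ — leaving $x$ as the lower point on $\ell_x$, i.e.\ $x\in M_-(q)$; this is Hirsch's argument \cite[Theorem~4.2]{H88-3}. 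The step I expect to be the main obstacle is the vertical-line skeleton of $R_-(q)$ underlying (b) and (c): in the new case $q=+\infty$ there is no equilibrium to localize around, so it must be built entirely from the dissipativity bounds $x_{\star},x^{\star}$, the cone inclusions \eqref{xstar_belong_R-infty}, the disjointness $R_-(+\infty)\cap R_+(-\infty)=\emptyset$, and a careful verification that $\mathrm{Int}\,\widetilde R_-(+\infty)$ reaches up to infinity and down below $x_{\star}$ along every $\ell_w$; the invariance of $M_-(q)$ then leans on the identification $M_-(q)=\partial R_-(q)$, since $\Phi_s$ for $s<0$, although monotone, is not order-reflecting.
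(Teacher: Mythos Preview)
Your proposal is correct and, like the paper, defers the case $q\in E$ to Hirsch \cite[Theorem~4.2]{H88-3} while handling $q=+\infty$ directly. The overall skeleton (order-convexity of $R_-$, graph-over-$H$ description via $\mu_y$/$a_w$, unorderedness by contradiction with order-convexity, disjointness via $R_-(q)\cap R_-(q')=\emptyset$) matches the paper's.

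There is one genuine simplification in your route worth highlighting. For the invariance of $M_-(+\infty)$, the paper works directly from the definition of $\partial_-$: it shows $\Phi_{-t}(z)\in M_-(+\infty)$ using backward monotonicity, and then has to prove $\Phi_t(z)\in M_-(+\infty)$ for $t>0$ by a separate, more delicate argument (constructing $w_i\downarrow\Phi_t(z)$ inside $R_-(+\infty)$ via neighborhoods and the strong competitiveness, since $\Phi_t$ is not order-preserving). You instead first establish the identity $M_-(+\infty)=\partial R_-(+\infty)$ from the vertical-line analysis (half-line intersections forced by \eqref{xstar_belong_R-infty} and $R_+(-\infty)\cap R_-(+\infty)=\emptyset$), and then get invariance for free because the topological boundary of an invariant open set under a flow of homeomorphisms is invariant. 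This also gives (c) for $q=+\infty$ immediately (topological boundaries are closed), whereas the paper deduces closedness from the explicit homeomorphism $h:\mathbb{H}\to\overline{R_-(+\infty)}$ and $M_-(+\infty)=h(H)$. Your shortcut is cleaner, but it is specific to $q=+\infty$: for $q\in E$ the set $R_-(q)$ can also carry an upper boundary, so $M_-(q)\subsetneq\partial R_-(q)$ in general and one must fall back on Hirsch's argument --- which both you and the paper do.
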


\begin{proof}
	For $q\in E$, one can repeat the same arguments in Hirsch \cite[Theorem 4.2]{H88-3} to obtain (a)-(c). We hereafter only focus on the case that $q=+\infty$.
	
	(a). First, we show that $R_{-}(+\infty)$ is order-convex. In fact, take $x,y\in \widetilde{R}_{-}(+\infty)$ with $x\leq y$. Then, there exists $t_0>0$ such that $x_{\star}\leq\Phi_{-t}(x)\leq \Phi_{-t}(z)$ for any $z\in[x,y]$ and $t>t_0$. As a consequence, $\lim_{s\to\infty}\left|\Phi_{-s}(z)\right|=\infty$ for any $z\in[x,y]$. By (\ref{R_tilde-}), it implies that $\widetilde{R}_{-}(+\infty)$ is order-convex.
	Now, take any $x,y\in R_{-}(+\infty)$. Since $R_{-}(+\infty)$ is open, there are $x_1,y_1\subset R_{-}(+\infty)$ such that $[x,y]\subset[[x_1,y_1]]$. Due to the order-convexity of $\widetilde{R}_{-}(+\infty)$, we have $[x_1,y_1]\subset\widetilde{R}_{-}(+\infty)$. So, $[[x_1,y_1]]\subset R_{-}(+\infty)$; and hence, $[x,y]\subset{R}_{-}(+\infty)$. This means that $R_{-}(+\infty)$ is order-convex.
	
	Next, we prove that $R_{-}(+\infty)$ is invariant.
	For this purpose, it suffices to show the invariance of $\widetilde{R}_-(+\infty)$. For any $x\in \widetilde{R}_-(+\infty)$, there is $t_0>0$ such that $x_\star\ll\Phi_{-t}(x)$ for any $t>t_0$. 
	For any $s_0\in\mathbb{R}$, take any $s>0$ such that $s-s_0>t_0$, and then $x_\star\ll\Phi_{-s}(\Phi_{s_0}(x))$. Therefore, by (\ref{R_tilde-}), $\Phi_{s_0}(x)\in \widetilde{R}_-(+\infty)$ for any $s_0\in\mathbb{R}$.
	Hence, $R_{-}(+\infty)$ is invariant.
	
	(b). We first show the invariance of $M_{-}(+\infty)$. For any $z\in M_{-}(+\infty)$, there is a sequence $\{u_i\}_{i\in\mathbb{N}}\subset R_{-}(+\infty)$ such that $u_i\downarrow z$ as $i\to\infty$, but no sequence $\{v_i\}_{i\in\mathbb{N}}\subset R_{-}(+\infty)$ such that $v_i\uparrow z$ as $i\to\infty$.
	
	To show that $\Phi_{-t}(z)\in M_{-}(+\infty)$ for $t>0$, we note that $\Phi_{-t}(u_i)\downarrow\Phi_{-t}(z)$ as $i\to\infty$. So, it suffices to show that there is no sequence $\{y_n\}_{n\in\mathbb{N}}\subset R_{-}(+\infty)$ such that $y_n\uparrow\Phi_{-t}(z)$ as $n\to\infty$. Otherwise, one has $z\in[y_n,\Phi_{-t}(u_i)]$ for some $n$ and $i$. Since $R_{-}(+\infty)$ is order-convex and $y_n,\Phi_{-t}(u_i)\in R_{-}(+\infty)$, it follows that $[y_n,\Phi_{-t}(u_i)]\subset R_{-}(+\infty)$. So, $z\in R_{-}(+\infty)$, contradicting $z\in M_{-}(+\infty)$.
	
	Now we show $\Phi_{t}(z)\in M_{-}(+\infty)$ for $t>0$.
	For each $i\in\mathbb{N}$, we take a neighborhood $U_i$ of $z$ such that $U_i\ll u_i$, and choose $w_i\in\Phi_{t}U_i$ such that $\Phi_{t}(z)\ll w_i$. Since $u_i\downarrow z$, we have $w_i\downarrow \Phi_{t}(z)$ as $i\to\infty$.
	By \textnormal{(H1)}, we have $\left[\left[\Phi_t(z),w_i\right]\right]\subset\Phi_t\left[\left[z,u_i\right]\right]$.
	Due to the invariance and order-convexity of $R_{-}(+\infty)$, one has
	$\Phi_t\left[\left[z,u_i\right]\right]\subset R_-(+\infty)$, which implies $w_i\in R_{-}(+\infty)$ for any $i$. 
	On the other hand, if there exists $\{y_n\}_{n\in\mathbb{N}}\subset R_{-}(+\infty)$ such that $y_n\uparrow \Phi_{t}(z)$ as $n\to\infty$, then $\Phi_{-t}(y_n)\uparrow z$ as $n\to\infty$, contradicting $z\in M_{-}(+\infty)$.
	Thus, we have proved the invariance of $M_{-}(+\infty)$.
	
	Suppose that $M_{-}(+\infty)$ is not unordered.
	Then, we take $x<y$ in $M_{-}(+\infty)$. 
	By \textnormal{(H1)}, we may assume that $x\ll y$, because $M_{-}(+\infty)$ is invariant.
	Let $\{u_i\}_{i\in\mathbb{N}}$, $\{v_j\}_{j\in\mathbb{N}}$ be two sequences in $R_{-}(+\infty)$ satisfying $u_i\downarrow x$ and $v_j\downarrow y$ as $i,j\to\infty$, respectively.
	Then, $u_k\ll y\ll v_k$ for all $k$ sufficiently large. Therefore, $y\in R_{-}(+\infty)$, since $R_{-}(+\infty)$ is order-convex. This contradicts $y\in M_{-}(+\infty)$.
	Hence, $M_{-}(+\infty)$ is unordered.
	
	We now prove that $M_{-}(+\infty)$ is an open $(n-1)$-cell.
	To see this, let $\mathbb{H}=\{y+\lambda v: y\in H,\lambda\geq0\}$ be a closed half-space of $\mathbb{R}^n$, where $H=\{y\in\mathbb{R}^n: y\cdot v=0\}$ denotes the orthogonal hyperplane of $v\gg0$ and ``$\cdot$" is the  scalar product in $\mathbb{R}^n$.
	For each $y\in H$, we define $$\mu_y=\inf{\{\mu\in\mathbb{R}: y+\mu v\in R_{-}(+\infty)\}}.$$
	It has $\mu_y\leq\inf\{\mu\in\mathbb{R}: y+\mu v\geq x^{\star}\}$, since $[[x^\star,+\infty]]\subset R_{-}(+\infty)$ for any $y\in H$ (see (\ref{xstar_belong_R-infty})). So, $\mu_y<\infty$ for any $y\in H$. 
	Furthermore, one can obtain a homeomorphism 
	\begin{equation}\label{homo_h}
		h:\ \mathbb{H}\longrightarrow \overline{R_{-}(+\infty)},\quad y+\lambda v\mapsto h(y+\lambda v)\triangleq y+ \lambda v + \mu_y v.
	\end{equation}
	Then it follows that $M_{-}(+\infty)$ is an open $(n-1)$-cell, since $M_{-}(+\infty)=h(H)$. 
	
	Finally, we prove that $M_{-}(+\infty)\cap M_{-}(q)=\emptyset$ for $q\in E$. Suppose that $z\in M_{-}(+\infty)\cap M_{-}(q)$, there exist $\{u_i\}_{i\in\mathbb{N}}\subset R_{-}(+\infty)$ and $\{v_j\}_{j\in\mathbb{N}}\subset R_{-}(q)$ such that $u_i\downarrow z$ and $v_j\downarrow z$ as $i,j\to\infty$, respectively. Choose $i_1,i_2,j_0\in\mathbb{N}$ such that $v_{j_0}\in [[u_{i_1},u_{i_2}]]\subset R_{-}(+\infty)$, which implies that $v_{j_0}\in R_{-}(q)\cap R_{-}(+\infty)\neq\emptyset$.  
	Noticing that $R_{-}(q)\subset \Gamma$ and $R_{-}(+\infty)\subset R(\infty)=\mathbb{R}^n\backslash\Gamma$, we obtain a contradiction.
	
	(c). Recall that $M_{-}(+\infty)=h(H)$ in the above paragraph. Since $H$ is closed and $h$ is a homeomorphism (see (\ref{homo_h})), $M_{-}(+\infty)$ is hence closed as well.
	
	Thus, we have completed the proof.
\end{proof}

Before we give the proof of Theorem \ref{4.2}, we still need the following lemma, called ``\textit{absorbing principle for Birkhoff center}". For brevity, we hereafter always let $B$ be an unordered connected component of $\mathcal{B}(\Phi)$.

\begin{lem}\label{k_2}{\rm (Absorbing Principle for Birkhoff center).}
If $q\in E$ satisfies $x<q$ for some $x\in B$, then $B\ll q$. Similarly, if $q\in E$ satisfies $q<x$ for some $x\in B$, then $q\ll B$.
\end{lem}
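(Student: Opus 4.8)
The plan is to prove the first assertion (the second is dual) by a connectedness argument built on the connecting lemma (Lemma~\ref{3.6_2}) and the intersection principle (Proposition~\ref{endpoint}). Suppose $q\in E$ and $x<q$ with $x\in B$. By \textnormal{(H1)} and the invariance of $B$, after replacing $x$ by $\Phi_{-\tau}(x)$ for small $\tau>0$ we may assume $x\ll q$, so $x\in\mathrm{Int}\,K_q$, equivalently $q\in\mathrm{Int}\,C_x$ is not quite what we want; rather, working at the base point $q$, we have $x\in \mathrm{Int}\,K_q$. The goal is to show that the whole component $B$ lies in $\mathrm{Int}\,K_q$ with $B\ll q$. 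First I would argue that $B\cap C_q=\emptyset$: if some $y\in B$ satisfied $y\geq q$, then $B$ would contain a point $x\ll q$ (i.e. $x\in\mathrm{Int}\,K_q$ in the ``lower'' sense) and a point $y$ with $q\leq y$. Since $B$ is unordered, $y\neq q$ and in fact $y>q$ forces, again via \textnormal{(H1)} on the invariant set $B$, that $q\ll y$, so $y\in\mathrm{Int}\,C_q$ (the ``upper'' half). Then the connected set $S=B\subset\mathcal{B}(\Phi)$ meets both $\mathrm{Int}\,C_q$ and $\mathrm{Int}\,K_q$; but to invoke Lemma~\ref{3.6_2} we need the base point to be a recurrent point.

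This is where the main work lies, and I expect it to be the principal obstacle: upgrading the equilibrium base point $q$ (which need not be recurrent — although equilibria \emph{are} recurrent, so $q\in R(\Phi)$, which actually resolves this immediately) and, more seriously, controlling the strict-versus-nonstrict order relations so that the two pieces $B\cap\mathrm{Int}\,C_q$ and $B\cap\mathrm{Int}\,K_q$ are genuinely nonempty and relatively open and exhaust $B$. Since $q$ is an equilibrium, $q\in R(\Phi)$, so Proposition~\ref{endpoint} applies directly at $x=q$: $\mathcal{B}(\Phi)\cap\partial C_q=\{q\}$. As $B$ is a \emph{connected component} of $\mathcal{B}(\Phi)$ and $q\notin B$ (because $B$ is unordered but contains $x\neq q$ with $x\ll q$, and if $q\in B$ the pair $x,q$ would be ordered in $B$), we get $B\cap\partial C_q=\emptyset$. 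Thus $B=(B\cap\mathrm{Int}\,C_q)\cup(B\cap\mathrm{Int}\,K_q)$ is a partition of the connected set $B$ into two disjoint relatively open subsets; one of them, namely $B\cap\mathrm{Int}\,K_q\ni x$, is nonempty, so by connectedness $B\subset\mathrm{Int}\,K_q$. It remains to pin down which half of $\mathrm{Int}\,K_q=[[-\infty,q]]\cup[[q,+\infty]]$ (more precisely, the two connected components of $\mathrm{Int}\,K_q$ cut by translating) contains $B$: since $x\ll q$ lies in the ``lower'' component and $B$ is connected and disjoint from $\partial C_q$ (hence from the common boundary of the two halves of $\mathrm{Int}\,K_q$, which is contained in $\partial C_q$... ), $B$ lies entirely in that lower component, i.e. $y\ll q$ for all $y\in B$, which is exactly $B\ll q$.

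Let me restate the flow of steps in order: (1) reduce to $x\ll q$ using strong competitiveness \textnormal{(H1)} and invariance of $B$; (2) observe $q\in E\subset R(\Phi)$ and $q\notin B$ (else $B$ contains the ordered pair $x,q$); (3) apply Proposition~\ref{endpoint} at the recurrent point $q$ to get $\mathcal{B}(\Phi)\cap\partial C_q=\{q\}$, hence $B\cap\partial C_q=\emptyset$; (4) split $B=(B\cap\mathrm{Int}\,C_q)\sqcup(B\cap\mathrm{Int}\,K_q)$ into relatively open pieces and use connectedness of $B$ together with $x\in B\cap\mathrm{Int}\,K_q$ to conclude $B\subset\mathrm{Int}\,K_q$; (5) refine using connectedness again (and that $B$ avoids $\partial C_q$, which separates the two ``rays'' of $\mathrm{Int}\,K_q$) to conclude $B$ lies in the single half $[[-\infty,q]]$, i.e. $B\ll q$. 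The dual statement $q\ll B$ when $q<x$ for some $x\in B$ follows by the identical argument with the roles of $C_q$ and $K_q$, or equivalently by applying the above to the reversed order.

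The one delicate point worth flagging in the writeup is step (5): one must verify that the two order-rays comprising $\mathrm{Int}\,K_q$ are indeed separated within $\mathbb{R}^n\setminus\partial C_q$, so that a connected subset of $\mathrm{Int}\,K_q$ avoiding $\partial C_q$ cannot straddle both. This is a purely geometric fact about the rank-one cone $C$ and its complementary cone $K$: $\mathrm{Int}\,K$ has exactly two connected components (``above'' and ``below''), whose closures meet only along $\partial C$. I would cite or quickly verify this from the structure of rank-$1$ cones (as in \cite{FWW17,FWW19}). Everything else is a routine combination of \textnormal{(H1)}, Proposition~\ref{endpoint}, and the connectedness of $B$, exactly in the spirit of the proof of Theorem~\ref{4.1}.
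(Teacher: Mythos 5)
Your overall strategy is sound and, once decoded, is essentially the paper's own argument: the paper supposes some $y\in B$ with $y-q\in K$, uses \textnormal{(H1)} and the invariance of $B$ to arrange $y\in\text{Int}K_q$ and $x\in\text{Int}C_q$, and then applies the connecting lemma (Lemma~\ref{3.6_2}) at the recurrent point $q\in E\subset R(\Phi)$ to force $q\in B$, contradicting the unorderedness of $B$. Your steps (2)--(4) simply inline the disconnection argument that underlies Lemma~\ref{3.6_2} (via Proposition~\ref{endpoint} and $q\notin B$), so in substance the two proofs coincide.

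However, as written your proof systematically swaps $C_q$ and $K_q$, and the step you yourself flag as delicate, (5), rests on a false geometric claim. In the paper's notation $C=C^+\cup(-C^+)$ is the order (rank-one) cone and $K$ is the closure of its complement, i.e.\ the \emph{unrelated} directions. Hence $x\ll q$ means $x-q\in-\text{Int}C^+\subset\text{Int}C$, so $x\in\text{Int}C_q$, not $\text{Int}K_q$; likewise the desired conclusion $B\ll q$ says $B\subset q-\text{Int}C^+\subset\text{Int}C_q$, so your intermediate target ``$B\subset\text{Int}K_q$'' is the wrong containment. The two ``rays'' $[[-\infty,q]]$ and $[[q,+\infty]]$ are the two connected components of $\text{Int}C_q$, namely $q-\text{Int}C^+$ and $q+\text{Int}C^+$ (disjoint open sets because $C^+$ is pointed); they are \emph{not} components of $\text{Int}K_q$, and the asserted fact that $\text{Int}K$ has exactly two components ``above'' and ``below'' is false (those order rays lie in $\text{Int}C$, and $\text{Int}K$ is in fact connected for $n\geq 3$). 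The repair is mechanical: replace $x$ by $\Phi_{-\tau}(x)\in B$ so that $x\ll q$, hence $x\in\text{Int}C_q$; since $q\notin B$ and Proposition~\ref{endpoint} gives $B\cap\partial C_q=\emptyset$, while $\mathbb{R}^n\setminus\partial C_q=\text{Int}C_q\sqcup\text{Int}K_q$, connectedness yields $B\subset\text{Int}C_q=(q+\text{Int}C^+)\sqcup(q-\text{Int}C^+)$, and connectedness again places $B$ in the lower piece, i.e.\ $B\ll q$. With the labels corrected, your ``delicate point'' becomes the trivial decomposition of $\text{Int}C$ and needs no citation about rank-one cones.
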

\begin{proof}
	We only prove the first part of the lemma.
	Suppose on the contrary that there is some $y \in B$ such that $y-q\in K$.
	Then by \textnormal{(H1)} and the invariance of $B$, one may assume that $y-q\in\text{Int}K$ and $x- q\in\text{Int}C$. So, the connecting lemma (Lemma \ref{3.6_2}) entails that $q\in B$ at once. 
	In other words, $B$ contains two points $x<q$, which contradicts the non-ordering of $B$. Thus, we have proved $B\ll q$.
\end{proof}

\vskip 2mm
\noindent\textbf{Remark 4.3.} 
When $B$ is the $\omega$-limit set of an individual orbit, Lemma \ref{k_2} was first proved by Hirsch \cite[Lemma 3.3]{H88-3}. This lemma plays a critical role for proving that the limit set lies on one element of $\mathcal{F}$ in \cite{H88-3}. In our situation, however, $B$ is just a connected component of $\mathcal{B}(\Phi)$, whose structure cannot be fully understood only by means of the analysis of the limit-sets of individual orbits. Fortunately, the intersection principle for joint cone-boundary (Proposition \ref{endpoint}) and connecting lemma (Lemma \ref{3.6_2}) help us again overcome the difficulty and ensure the feasibility of Lemma \ref{k_2} for the absorbing principle for Birkhoff center.
\vskip 2mm

\begin{lem}\label{k_1}
	Let $x\in B$, then one of the following alternatives must occur:\par
	{\rm(i)} there exist a sequence $z_n\downarrow x$, and $q_n\in\alpha(z_n)\cap E$ with $x\leq q_n$; or otherwise,\par
	{\rm(ii)} there exists a sequence $\{z_n\}_{n\in\mathbb{N}}\subset R_{-}(+\infty)$ satisfying $z_n\downarrow x$.
\end{lem}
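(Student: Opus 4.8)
The plan is to dichotomize according to whether the points lying just above $x$ have bounded or unbounded backward orbits. I would start with two preliminary remarks. Since every recurrent point has a precompact orbit, $R(\Phi)\subset\Gamma$, and hence $x\in\mathcal B(\Phi)=\overline{R(\Phi)}\subset\Gamma$; consequently $\Phi_{-t}(x)\in\Gamma$, so $x_\star\le\Phi_{-t}(x)$ for all $t\ge0$. Moreover, fixing $v\gg0$, by \textnormal{(H1)} the time-reversed flow is strongly monotone, so for any $w$ with $w\gg x$ one has $\Phi_{-t}(w)\gg\Phi_{-t}(x)\ge x_\star$ for every $t>0$.

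Next observe that $[[x,+\infty]]$ is open and contains $x$ in its closure, so either $x\in\overline{R(\infty)\cap[[x,+\infty]]}$, or else there is a neighbourhood $W$ of $x$ with $W\cap[[x,+\infty]]\subset\Gamma$. In the first case I pick $z_n\downarrow x$ with $z_n\in R(\infty)$; the estimate above gives $x_\star\ll\Phi_{-t}(z_n)$ for all $t>0$, so $z_n\in\widetilde R_-(+\infty)$, and the same reasoning shows that the whole open set $R(\infty)\cap[[x,+\infty]]$ lies in $\widetilde R_-(+\infty)$, hence in its interior $R_-(+\infty)$. Thus $\{z_n\}\subset R_-(+\infty)$ and alternative (ii) holds, which disposes of this case.

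In the remaining case I set $z_n=x+\tfrac1n v\downarrow x$; for large $n$ these lie in $W\cap[[x,+\infty]]\subset\Gamma$, so each $\alpha(z_n)$ is a nonempty compact invariant set, and the task is to produce $q_n\in\alpha(z_n)\cap E$ with $q_n\ge x$. The strategy is first to pin $\alpha(z_n)$ above $x$: along $t_k\to\infty$ with both $\Phi_{-t_k}(z_n)$ and $\Phi_{-t_k}(x)$ convergent, every limit point of $\Phi_{-t_k}(z_n)$ lies in $\alpha(z_n)$ and dominates a point of $\alpha(x)$, and I would then argue — using that $x$ is approximable by recurrent points together with the order-structure results of Section~\ref{s3} — that $\alpha(x)$ reaches up to $x$, so that $\alpha(z_n)$ meets $\{w:w\ge x\}$. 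Secondly, from a point $\zeta_n\in\alpha(z_n)$ with $\zeta_n\ge x$ I would extract an equilibrium by a monotone-iteration (sub-/super-solution) argument: iterating $\Phi_{-m}$ and exploiting invariance and order-convexity of the relevant sets, together with the absorbing principle (Lemma~\ref{k_2}) and the connecting lemma (Lemma~\ref{3.6_2}) to keep the iteration bounded and above $x$, should yield $q_n\in\alpha(z_n)\cap E$ with $q_n\ge x$, which is alternative (i).

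The hard part is this last step — producing an \emph{honest} equilibrium of $\alpha(z_n)$ lying above $x$. A priori $\alpha(z_n)$, being an $\omega$-limit set of the merely strongly monotone reversed flow, can carry complicated recurrent dynamics and need not contain any equilibrium for an arbitrary base point; what makes the argument work is precisely that $z_n$ sits just above the Birkhoff center, so that strong monotonicity forces its entire backward orbit to stay above that of $x$, and the joint-cone-boundary machinery of Section~\ref{s3} forbids this family from escaping downward across $x$. Thus the technical core is to justify the existence of the suprema used in the monotone iteration (equivalently, to keep the iteration inside $\Gamma$ and above $x$) and to treat the borderline configurations in which $\Phi_{-m}(x)$ fails to become eventually order-comparable with $x$.
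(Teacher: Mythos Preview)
Your dichotomy is correct and matches the paper's: either some neighbourhood $W$ of $x$ satisfies $W\cap[[x,+\infty]]\subset\Gamma$, or points of $R(\infty)$ accumulate on $x$ from above. Your treatment of the second case (alternative~(ii)) is also essentially the paper's.

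The gap is in the first case. You fix $z_n=x+\tfrac1n v$ and then try, \emph{after the fact}, to manufacture an equilibrium in $\alpha(z_n)$ lying above $x$. As you yourself note, for an arbitrary choice of $z_n$ the set $\alpha(z_n)$ need not contain any equilibrium at all, and the monotone-iteration sketch you outline does not close this gap: iterating $\Phi_{-m}$ on a point of $\alpha(z_n)$ stays inside $\alpha(z_n)$ by invariance, but nothing forces the iterates to be monotone or to converge, and Lemmas~\ref{k_2} and~\ref{3.6_2} give no control over $\alpha(z_n)$, which lies outside $B$.

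The paper avoids this entirely by choosing $z_n$ more cleverly. Since $W\cap[[x,+\infty]]\subset\Gamma$ and the reversed flow $\Phi_{-t}$ is strongly monotone, the Generic Convergence Theorem applies on this open set: a dense set of points there have $\alpha$-limit set contained in $E$. One picks $z_n\downarrow x$ from this dense set, so that $\alpha(z_n)\subset E$ is guaranteed from the outset. The inequality $x\le q_n$ then comes not from analysing $\alpha(x)$ but from the non-wandering property of $x\in\mathcal B(\Phi)$: there exist $x_i\to x$ and $t_i\to\infty$ with $\Phi_{t_i}(x_i)\to x$, hence $\Phi_{t_i}(x_i)\ll z_n$ for large $i$, and competitiveness gives $x_i\ll\Phi_{-t_i}(z_n)$; passing to a subsequence along which $\Phi_{-t_i}(z_n)\to q_n\in\alpha(z_n)\subset E$ yields $x\le q_n$. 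Both ingredients --- generic convergence to choose $z_n$, and non-wandering to locate $q_n$ above $x$ --- are missing from your outline.
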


\begin{proof}	
We consider the following two cases one by one.

Case (i). There exists a neighborhood $U$ of $x$ such that $U\cap[[x,+\infty]]\subset\Gamma$. 
Then, by Generic Convergence Theorem for $\Phi_{-t}$ ($t\geq0$) (see \cite[Theorem 1.4.3]{H95}), there is a sequence $\{z_n\}_{n\in\mathbb{N}}\subset U$ such that $z_n\downarrow x$ and $\alpha(z_n)\subset E$ for all $n\geq1$.
Since $x\in B\subset \mathcal{B}(\Phi)$, $x$ is non-wandering. Hence, there exist two sequences $x_i\to x$ and $t_i\to\infty$ such that $\Phi_{t_i}(x_i)\to x$ as $i\rightarrow \infty$.
Consequently, for each $n\geq1$, $\Phi_{t_i}(x_i)\ll z_n$ for $i$ large. By (H1), we then obtain 
\begin{equation}\label{x<Phix}
	x_i\ll\Phi_{-t_i}(z_n),
\end{equation}
As $\alpha(z_n)\subset E$, we can find a subsequence of $t_i$ (still denoted by $t_i$), such that $\Phi_{-t_{i}}(z_n)\rightarrow q_{n} \in E$ as $i\rightarrow\infty$, for each $n\geq1$. So, by letting $i\to\infty$ in (\ref{x<Phix}), we obtain $x\leq q_{n}$ for all $n\geq1$.

Case (ii). For any neighborhood $V$ of $x$, there is some $z\in V$ with $x\ll z$ such that $z\notin\Gamma$.
In this case, by the invariance of $B$, one has $x_\star\leq\Phi_{-t}(x)\ll\Phi_{-t}(z)$ for any $t>0$, where $x_{\star}=\inf \Gamma$ defined in (\ref{x_star}). Thus, by (\ref{R_tilde-}), we deduce that $z\in \widetilde{R}_{-}(+\infty)$. Further, $z\in R_{-}(+\infty)$.
Since $V$ is arbitrary chosen, we obtain a sequence $\{z_n\}_{n\in\mathbb{N}}\subset R_{-}(+\infty)$ such that $z_n\downarrow x$.
This completes the proof.
\end{proof}

\vskip 2mm
\noindent\textbf{Remark 4.4.} 
Similarly, for $x\in B$, one of the following corresponding alternatives also occurs:\par
{\rm(i)} there exist a sequence $z_n\uparrow x$, and $q_n\in\alpha(z_n)\cap E$ with $q_n\leq x$; or otherwise,\par
{\rm(ii)} there exists a sequence $\{z_n\}_{n\in\mathbb{N}}\subset R_{+}(-\infty)$ satisfying $z_n\uparrow x$.\\
The corresponding result can be proved similarly as in Lemma \ref{k_1}.
\vskip 2mm

By virtue of Lemma \ref{k_1}, we define two subsets of $B$ as
\begin{equation*}
	B_1=\{x\in B: x\text{ satisfies Lemma \ref{k_1}(i) with } x<q_n \text{ for all }n\geq1\},
\end{equation*}
and
\begin{equation*}
	B_2=\{x\in B: x\text{ satisfies Lemma \ref{k_1}(ii)}\}.
\end{equation*}

\begin{lem}\label{B1B2}
{\rm(i)} If $B_i\neq\emptyset$, then $B_j=\emptyset$ for $j\neq i$;\par 
{\rm(ii)} If $B_i\neq\emptyset$, then $B_i$ is dense in $B$, for $i=1,2$.
\end{lem}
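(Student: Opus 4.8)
The plan is to treat the two statements in sequence, first establishing the incompatibility (i) and then the density (ii), using the absorbing principle (Lemma \ref{k_2}) together with the order-convexity and invariance of $R_-(+\infty)$ (Proposition \ref{prop_of_cells_infinite}) as the main engines. For part (i), suppose for contradiction that both $B_1$ and $B_2$ are nonempty, say $x\in B_1$ and $y\in B_2$. Since $x\in B_1$, there is a sequence $z_n\downarrow x$ with $q_n\in\alpha(z_n)\cap E$ and $x<q_n$; because $B$ is a component of $\mathcal{B}(\Phi)$ (so is invariant and consists of limit points of recurrent points) and each $q_n$ is an equilibrium related to $x\in B$ by $x<q_n$, the absorbing principle (Lemma \ref{k_2}) forces $B\ll q_n$ for every $n$. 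In particular $y\ll q_n$ for all $n$. On the other hand $y\in B_2$ gives a sequence $\{w_m\}\subset R_-(+\infty)$ with $w_m\downarrow y$, so $w_m\ll q_n$ for $m$ large; I would then use order-convexity of $R_-(+\infty)$ to sandwich some point of $[[x,+\infty]]$-type between an element of $R_-(+\infty)$ and $z_n$, concluding $q_n$-related points land in $R_-(+\infty)\subset R(\infty)=\mathbb{R}^n\setminus\Gamma$, while equilibria and their basins of repulsion sit in $\Gamma$; the contradiction comes exactly as in the proof of Proposition \ref{prop_of_cells_infinite}(b). (More directly: $x\in B_1$ means $x\notin\Gamma$-escaping-neighborhoods, i.e. Case (i) of Lemma \ref{k_1} held, whereas $y\in B_2$ puts $y$ in the closure of $R_-(+\infty)$; pushing $x$ up toward $q_n$ and $y$ up out of $\Gamma$ and invoking the connecting lemma (Lemma \ref{3.6_2}) along a connected path in $B$ between $x$ and $y$ yields a point of $B$ equal to some equilibrium $q_n$, contradicting the non-ordering of $B$ since $x<q_n$.)

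For part (ii), fix $i$ with $B_i\neq\emptyset$ and let $x_0\in B$ be arbitrary; I want to show $x_0\in\overline{B_i}$. By Lemma \ref{k_1} applied at $x_0$, either Case (i) or Case (ii) occurs at $x_0$. If Case (ii) occurs at $x_0$ then $x_0\in B_2$, and by part (i) this forces $i=2$, so $x_0\in B_i$ and we are done. So assume Case (i) occurs at $x_0$: there is $z_n\downarrow x_0$ with $\alpha(z_n)\cap E\ni q_n$ and $x_0\le q_n$. If $x_0<q_n$ for all large $n$ then $x_0\in B_1$, and again part (i) pins down $i=1$. The remaining subtle situation is when $x_0\le q_n$ but $x_0=q_n$ for infinitely many $n$, i.e. $x_0$ is itself (approximable by orbits limiting to) the equilibrium; here I would argue that $x_0\in E$, hence $x_0$ is a trivial recurrent point, and use the invariance and connectedness of $B$ to show $x_0$ lies in the closure of the ``generic" part: since the sequences $z_n\downarrow x_0$ furnished by the Generic Convergence Theorem can be chosen so that each $z_n$ either lies in $R_-(+\infty)$ or has $\alpha(z_n)$ an equilibrium $\ne x_0$ strictly above $x_0$ after a small perturbation, one recovers nearby points of $B_1$ or $B_2$. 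The cleanest route is: show $B=\overline{B_1}\cup\overline{B_2}$ (every point of $B$ satisfies the dichotomy of Lemma \ref{k_1}, and the boundary/degenerate case $x_0\in E$ is absorbed into whichever $B_i$ is nonempty by a limiting argument through nearby non-equilibrium points); then, since $B$ is connected and by part (i) exactly one $B_i$ is nonempty, $B=\overline{B_i}$.

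The main obstacle I anticipate is the degenerate case in part (ii) where the sequence $z_n\downarrow x_0$ produces $q_n=x_0\in E$, so that $x_0$ a priori lies in neither $B_1$ (which requires strict inequality $x_0<q_n$) nor obviously in $\overline{B_2}$. Handling it requires extracting, from the Generic Convergence Theorem for $\Phi_{-t}$ and the fact that $x_0\in\mathcal{B}(\Phi)$ is non-wandering, points arbitrarily close to $x_0$ that genuinely belong to $B$ and genuinely realize one of the two alternatives with the strict inequality — this is where one must be careful that the generic-convergence sequences lie in (or can be nudged into) the component $B$ itself rather than merely in $\mathcal{B}(\Phi)$, and where the invariance of $B$ together with the connecting lemma does the work. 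A secondary technical point is keeping track of which of the two ``upper" alternatives (Lemma \ref{k_1}) versus the ``lower" ones (Remark 4.4) one is using; for the statement of Lemma \ref{B1B2} as written only the upper alternatives are relevant, but the symmetric bookkeeping must be stated cleanly to avoid sign confusion.
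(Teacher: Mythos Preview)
Your argument for part (i) has the right skeleton — from $x\in B_1$ extract $q\in E$ with $x<q$, apply Lemma~\ref{k_2} to get $B\ll q$, then derive a contradiction from any $y\in B_2$ — but the specific mechanism you propose (an order-convexity sandwich \`a la Proposition~\ref{prop_of_cells_infinite}(b)) does not fire: you have $w_m\in R_-(+\infty)$ with $w_m\ll q$, but no second point of $R_-(+\infty)$ above $q$ to sandwich against. The actual contradiction is dynamical and simpler: $w_m\ll q$ forces $\Phi_{-t}(w_m)\ll q$ for all $t>0$, while $w_m\in\widetilde R_-(+\infty)$ gives $x_\star\ll\Phi_{-t}(w_m)$ for large $t$; hence $\Phi_{-t}(w_m)$ is trapped in the bounded set $[[x_\star,q]]$, contradicting $w_m\in R(\infty)$. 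That is what the paper's terse ``Consequently, $B_2=\emptyset$'' is hiding. Your parenthetical ``more directly'' route via the connecting lemma is muddled and unnecessary.

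Part (ii) has a genuine gap, and it is exactly the degenerate case you yourself flag. Your pointwise strategy stalls when $x_0\in E$ and every admissible sequence produces $q_n=x_0$. Your proposed fix — perturb to a nearby point of $B$ where the alternative holds strictly — begs the question: nothing you have written rules out an entire relatively open subset $U\subset B$ consisting of such degenerate equilibria, and in that situation there are no nearby ``good'' points of $B$ to perturb toward. The paper meets this head-on by proving, as a preliminary claim, that $B_1\cup B_2$ is dense in $B$. The argument: if $U\subset B\setminus(B_1\cup B_2)$ is relatively open, then every $x\in U$ is an equilibrium and there is $z\gg x$ with $\alpha(z)=\{x\}$ (Monotone Criterion). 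Now take a small neighborhood $N$ of $x$ inside $U$ with $N\ll z$ and set $b=\sup N$. Since $N\subset E$ is unordered one gets $x<b\le z$, and invariance of $N$ yields $x<b\le\Phi_{-t}(b)\le\Phi_{-t}(z)$ for all $t>0$, contradicting $\Phi_{-t}(z)\to x$. This ``$\sup N$'' obstruction is the missing idea in your outline. Once density of $B_1\cup B_2$ is established, part (ii) is immediate from part (i); the connectedness of $B$, which you invoke at the end, plays no role in this final step.
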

\begin{proof}
We first \textit{claim that $B_1\cup  B_2$ is dense in $B$}.
Otherwise, there is a relatively open subset $U$ in $B$ such that $U\subset B\backslash(B_1\cup B_2)$. 
Then, for any $x\in U$, by Lemma \ref{k_1}(i), there exist $z\gg x$ and $q\in\alpha(z)\cap E$ such that
\begin{equation}\label{x=qzx}
	x=q\in \alpha(z)\cap E.
\end{equation}
Hence, $U\subset E$. 
In particular, together with the Monotone Criterion (see more details in \cite[Theorem 1.2.1]{H95}), (\ref{x=qzx}) directly yields 
\begin{equation}\label{zx_x}
	\alpha(z)=\{x\}.
\end{equation}
Take a small neighborhood $N$ of $x$ in $U$ ($\subset E$) such that $N\ll z$ and let $b=\sup{N}$. Clearly, $N<b\leq z$, since $N$ is unordered. 
Moreover, by \textnormal{(H1)} and the invariance of $N$ ($N$ consists of equilibria), $N<\Phi_{-t}(b)$ for any $t>0$. This implies that
\begin{equation}\label{b<Phib<zx}
	x<b\leq\Phi_{-t}(b)\leq\Phi_{-t}(z),\quad \text{ for any } t>0.
\end{equation}
This contradicts (\ref{zx_x}). So, we have proved the claim.

Next, we show that at least one of $B_1$ and $B_2$ is empty.
To explain this, we observe that if $B_1\neq\emptyset$ then $B_2$ must be empty. In fact, if $B_1\neq\emptyset$, then there exists $x\in B$ that satisfies Lemma \ref{k_1}(i) and for which $x<q_{n}\in E$ for some $n\in\mathbb{N}$. Then, Lemma \ref{k_2} guarantees that $B\ll q_{n}$. 
Consequently, $B_{2}=\emptyset$.
Hence, $B_2\neq\emptyset$ also implies $B_1=\emptyset$.
Together with the claim, we have completed the proof.
\end{proof}

Now we are ready to prove Theorem \ref{4.2}.
\begin{proof}[Proof of Theorem \ref{4.2}]
We only prove that $B\subset M_{-}(q)$ for some $q\in E\cup\{+\infty\}$. The proof of $B\subset M_{+}(p)$ for some $p\in E\cup\{-\infty\}$ is similar.
To this end, we will deal with the following two cases, respectively.

Case (a): $B_1\neq \emptyset$ (hence $B_2=\emptyset$ by Lemma \ref{B1B2}(i)).
Let $$E^B=\{e\in E:B\ll e\}.$$
By Lemma \ref{k_2} and $B_1\neq \emptyset$, we have $\emptyset\neq E^B\subset E$. 
Let $q_0=\sup B$. Since $B$ is unordered, $B<q_0$. It then follows from \textnormal{(H1)} and the invariance of $B$ that $q_0< \Phi_{-t}(q_0)$ for any $t>0$. Again, by virtue of the Monotone Criterion, we have
\begin{equation}\label{q_0_to_q}
	\Phi_{-t}(q_0)\to q\in E,\quad \text{ as }\ t\to\infty.
\end{equation}
Therefore, $B<q_0\ll q$. Hence, $q\in E^B$.
In addition, recall that $q_0\leq E^B$. Then $\Phi_{-t}(q_0)\leq E^B$ for $t>0$, which implies that $q\leq E^B$. Therefore, $q=\inf E^B$.

Fix any $x\in B_1$. Since $B\ll q$, there exist a sequence $\{z_n\}_{n\in\mathbb{N}}\ll q$ in Lemma \ref{k_1}(i) such that $q_{n}\leq q$ for any $n\in\mathbb{N}$. 
On the other hand, note that $q_n\in E^B$. Then $q\ (=\inf E^B)\leq q_n$ for all $n\geq1$. This implies that
\begin{equation}\label{q_n=q}
	q_{n}=q,\quad \text{ for any } x\in B_1 \text{ and } n\geq1.
\end{equation}

Now, by Lemma \ref{k_1}(i) and (\ref{q_n=q}), for any $x\in B_1$ with $\{z_n\}_{n\in\mathbb{N}}$, $\{q_n\}_{n\in\mathbb{N}}$ satisfying $z_n\downarrow x$ and $q=q_n\in\alpha(z_n)$ for each $n\in\mathbb{N}$. Again, by the Convergence Criterion, $\alpha(z_n)=\{q\}$. Therefore, $z_n\in R_{-}(q)$ for any $n\in\mathbb{N}$. 
In other words, there is a sequence $\{z_n\}_{n\in\mathbb{N}}\subset R_-(q)$ satisfying that $z_n\downarrow x$.
Together with the fact that $R_{-}(q)$ is order-convex (see Proposition \ref{prop_of_cells_infinite}(a)), there exists no sequence $\{y_i\}_{i\in\mathbb{N}}$ with $y_i\uparrow x$ (otherwise, one has $x\in [y_i,z_n]\subset R_{-}(q)$ for some $i$ and $n$, which contradicts $x\notin R_{-}(q)$).
Thus, we have $x\in M_{-}(q)$. Due to the arbitrariness of $x\in B_1$, we obtain $B_1\subset M_{-}(q)$. Since $B_1$ is dense in $B$ (see Lemma \ref{B1B2}(ii)), it follows from Proposition \ref{prop_of_cells_infinite}(c) that $B\subset M_{-}(q)$.

Case (b): $B_2\neq\emptyset$ (hence $B_1=\emptyset$ by Lemma \ref{B1B2}(i)).
Clearly, for any $x\in B_2$, there exists a sequence $\{z_n\}_{n\in\mathbb{N}}\subset R_{-}(+\infty)$ such that $z_n\downarrow x$ as $n\to\infty$. 
Together with the fact that $R_{-}(+\infty)$ is order-convex in Proposition \ref{prop_of_cells_infinite}(a), there exists no sequence $\{y_i\}_{i\in\mathbb{N}}\subset R_{-}(+\infty)$ with $y_i\uparrow x$ (otherwise, $x\in [y_i,z_n]\subset R_{-}(+\infty)$ for some $i$ and $n$, which contradicts $x\notin R_{-}(+\infty)$).
Thus, we have $x\subset M_{-}(+\infty)$. Due to the arbitrariness of $x\in B_2$, we have $B_2\subset M_{-}(+\infty)$. Again, since $B_2$ is dense in $B$ (see Lemma \ref{B1B2}(ii)), it follows from Proposition \ref{prop_of_cells_infinite}(c) that $B\subset M_{-}(+\infty)$.
We have completed the proof.
\end{proof}

\appendix
\section*{Appendix}

\setcounter{equation}{0} \setcounter{thm}{0}
\renewcommand{\thesection}{\Alph{section}}
\renewcommand{\theequation}{\Alph{section}.\arabic{equation}}
\setcounter{section}{1}
\renewcommand{\thethm}{\Alph{thm}}

In this appendix, we will exhibit some critical properties for recurrent-time and IP sets.
Let $z\in R(\Phi)$ and $\theta>0$, the \textit{recurrent-time set} of $z$ parameterized by $\theta$ is defined as $$N(z,\theta)=\{t>0: |\Phi_{t}(z)-z|<\theta\}.$$

\newtheorem*{proposition}{\textnormal{\textbf{Proposition A.1}}}
\begin{proposition}\label{1p}
Let $z\in R(\Phi)$ and $\theta>0$. Then, for any $\tau,\varepsilon>0$,
\begin{equation}\label{1pp}
	N(z,\theta)\cap\mathcal{T}(\tau,\varepsilon)\neq\emptyset,\tag{A1}
\end{equation}
where $\mathcal{T}(\tau,\varepsilon)=\{n\tau+t:n\in\mathbb{N},|t|<\varepsilon\}$.
\end{proposition}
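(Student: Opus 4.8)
The plan is to follow Furstenberg's IP-set argument adapted to the continuous-time flow $\Phi_t$. First I would recall that since $z\in R(\Phi)$, for every $\theta>0$ the recurrent-time set $N(z,\theta)$ is \emph{nonempty} and, more importantly, it is an \emph{IP-set}: there is an infinite sequence $s_1,s_2,\dots$ of positive return times such that every finite sum $s_{i_1}+\dots+s_{i_k}$ (with $i_1<\dots<i_k$) again lies in $N(z,\theta')$ for a suitable $\theta'$. The mechanism is the familiar pigeonhole/finite-intersection trick: choosing return times $s_i$ with $|\Phi_{s_i}(z)-z|$ decaying fast enough and using uniform continuity of $\Phi_t$ on compact time intervals and on a neighbourhood of $\overline{O^+(z)}$, one controls $|\Phi_{s_{i_1}+\dots+s_{i_k}}(z)-z|$ by a telescoping estimate $\sum_j |\Phi_{\sigma_{j}}(z)-\Phi_{\sigma_{j-1}}(z)|$ where the $\sigma_j$ are partial sums; each term is small because $\Phi$ is uniformly continuous and the ``increment'' $\Phi_{s_{i_j}}$ moves points near $z$ only slightly.

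The second ingredient is a general combinatorial fact about IP-sets in $(\mathbb{N},+)$ or in $(0,\infty)$: every IP-set is \emph{syndetic-like along arithmetic-type progressions}, or more precisely every IP-set $P$ meets every set of the form $\mathcal{T}(\tau,\varepsilon)=\{n\tau+t: n\in\mathbb{N},\ |t|<\varepsilon\}$. To see this I would pass to the ``digital'' structure: pick a long finite IP-subset generated by $s_1,\dots,s_N$; the $2^N-1$ finite sums, reduced modulo $\tau$, must by pigeonhole have two sums $A<B$ lying within $\varepsilon$ of each other $\bmod\ \tau$ (once $2^N-1$ exceeds $\lceil \tau/\varepsilon\rceil$); subtracting, $B-A$ is again a finite sum of $s_i$'s (because the index sets can be arranged to be disjoint by a further pigeonhole/Ramsey step on the supports, which is exactly Hindman-type reasoning), hence $B-A\in P$, and $B-A$ is within $\varepsilon$ of a multiple of $\tau$, i.e. $B-A\in\mathcal{T}(\tau,\varepsilon)$. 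Combining with the first ingredient (with $\theta'$ chosen below $\theta$ and absorbing the accumulated telescoping error) yields $N(z,\theta)\cap\mathcal{T}(\tau,\varepsilon)\neq\emptyset$, which is exactly \eqref{1pp}.

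The main obstacle is the bookkeeping that makes the two ingredients compatible: when I extract the difference $B-A$ of two finite sums I must ensure (a) that $B-A$ is \emph{still a finite sum of the generators} $s_i$ with a \emph{disjoint} index set from the ones already used, so that the telescoping bound applies and gives $|\Phi_{B-A}(z)-z|<\theta$, and (b) that the ``$\bmod\ \tau$'' pigeonhole is carried out on sums short enough that the uniform-continuity error stays below $\theta$. This forces a careful order of quantifiers: first fix $\tau,\varepsilon,\theta$; then choose $\delta>0$ (uniform-continuity modulus) and an integer $N$ large enough for the modular pigeonhole; then, \emph{using recurrence}, select generators $s_1,\dots,s_{2N}$ with $|\Phi_{s_i}(z)-z|$ so small that \emph{any} sum of at most $2N$ of them stays within $\theta$ of $z$; finally run the Hindman/Ramsey colouring on the $2^{2N}$ subsets to produce $A,B$ with disjoint supports and $B-A\equiv r\ (\mathrm{mod}\ \tau)$ with $|r|<\varepsilon$. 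I expect the delicate point to be arranging disjoint supports for $A$ and $B$ simultaneously with the modular coincidence; this is resolved by a two-step pigeonhole (first split the $2N$ generators into two blocks of $N$, apply the modular pigeonhole inside the $\sigma$-algebra of sums from the first block, then again in the second block, and take the difference across blocks), at the cost of a harmless factor of $2$ in $N$. Once this is set up, the estimate $|\Phi_{B-A}(z)-z|<\theta$ and the membership $B-A\in\mathcal{T}(\tau,\varepsilon)$ both follow, completing the proof of Proposition~A.1.
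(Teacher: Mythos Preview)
Your approach is sound in outline and shares its first ingredient with the paper: both build an IP-set $S\subset N(z,\theta)$ by Furstenberg's inductive trick. The paper carries this out in (A2)--(A8) by choosing at each stage a $\theta_{n+1}\le\theta$ so small that every already-constructed sum $\Phi_s$ sends the $\theta_{n+1}$-ball at $z$ into the $\theta$-ball, and then picking $p_{n+1}$ with $|\Phi_{p_{n+1}}(z)-z|<\theta_{n+1}$; this is cleaner than the telescoping bound you sketch, since it needs only continuity of a \emph{finite} list of time maps at $z$, not uniform continuity over unbounded time intervals. The two proofs diverge in the second step. The paper does \emph{not} run a single modular pigeonhole; instead it fixes $n$ with $\tau/n<\varepsilon$, partitions residues mod $\tau$ into $2n-1$ classes $Q_k$, extracts an infinite subsequence of generators lying in a single class $Q_{k_1}$, and then \emph{iterates}: summing blocks of $n-|k_1|$ (or $n+|k_1|$) consecutive such generators produces a new IP-sequence whose residues fall in classes $Q_j$ with $|j|<|k_1|$. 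Repeating gives $|k_1|>|k_2|>\cdots\ge 0$, reaching $Q_0\subset\mathcal{T}(\tau,\varepsilon)$ in finitely many steps.

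Your direct pigeonhole is more elementary in spirit, but the ``two-block / Hindman--Ramsey'' resolution of the disjoint-support obstacle is not convincing as written: a ``difference across blocks'' of two finite sums need not lie in $S$, and invoking Hindman here is heavy machinery without a clear statement of what is being coloured. The actual fix is far simpler than you suggest: apply the pigeonhole not to all $2^N-1$ finite sums but to the $N$ \emph{consecutive partial sums} $P_k=p_1+\cdots+p_k$ with $N>\tau/\varepsilon$; then for the two indices $j<k$ that collide modulo $\tau$ one has $P_k-P_j=p_{j+1}+\cdots+p_k\in S$ automatically, and the support problem evaporates. To guarantee the nearby multiple of $\tau$ is positive, arrange each $p_i>\tau$ during the IP construction, which recurrence allows. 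With this adjustment your argument is complete and shorter than the paper's; what the paper's iterative scheme buys is that it never needs to cap the number of generators in advance.
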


\begin{proof}
We first claim that there is a time sequence $\{p_{i}\}_{i\geq1}$ such that 
\begin{equation}\label{claim}
	S\triangleq \{ p_{i_{1}}+\cdots+p_{i_{k}}:i_{1}<\cdots<i_{k},k\geq1\}\subset N(z,\theta).\tag{A2}
\end{equation}
Actually, the construction of $S$, motivated by Furstenberg \cite[Proposition 8.10]{F81} (see also Ye et al. \cite[Theorem 1.2.13]{Y08}), is called IP-set for the discrete-time cases. For the completeness, we give the details of the construction of $S$.
In fact, since $z$ is recurrent, there exists $p_{1} > 0$ such that
\begin{equation}\label{1}
	|\Phi_{p_{1}}(z)-z|<\theta.\tag{A3}
\end{equation}
Then one can choose some $\theta_{1}\in (0,\theta]$ such that 
\begin{equation}\label{2}
	|\Phi_{p_{1}}(y)-z|< \theta,\quad\text{ whenever }|y-z|< \theta_{1}.\tag{A4}
\end{equation}
For such $\theta_{1}\in (0,\theta]$, we take $p_{2}>0$ such that 
\begin{equation}\label{31}
	|\Phi_{p_{2}}(z)-z|<\theta_{1}.\tag{A5}
\end{equation}
So, by \eqref{1}-\eqref{31}, we have 
\begin{equation*}
	|\Phi_{s}(z)-z|<\theta,\quad \text{ for }s=p_{1},p_{2}\text{ and }p_{1}+p_{2}.
\end{equation*}
Suppose now that $p_{1},\cdots,p_{n}$ have been determined such that 
\begin{equation}\label{5}
	|\Phi_{s}(z)-z|<\theta,\quad \text{ for } s=p_{i_{1}}+\cdots+p_{i_{k}}\text{ and }1\leq i_{1}< \cdots < i_{k}\leq n.\tag{A6}
\end{equation}
We show how to choose $p_{n+1}$. Similarly as above, from~(\ref{5}), we choose $\theta_{n+1}\in(0,\theta]$ such that 
\begin{equation}\label{6}
	|\Phi_{s}(y)-z|<\theta,\quad\text{ whenever }|y-z|< \theta_{n+1},\text{ and }s \text{ in }({\rm\ref{5}}).\tag{A7}
\end{equation}
For such $\theta_{n+1}$, we take $p_{n+1}>0$ such that
\begin{equation}\label{7}
	|\Phi_{p_{n+1}}(z)-z|<\theta_{n+1}.\tag{A8}
\end{equation}
Then, by \eqref{5}-\eqref{7}, we obtain that
\begin{equation*}
	|\Phi_{s}(z)-z|<\theta,\quad \text{ for } s=p_{i_{1}}+\cdots+p_{i_{k}}\text{ and }1\leq i_{1}< \cdots < i_{k}\leq n+1.
\end{equation*}
By following these steps, we can obtain the sequence $\{p_{i}\}_{i\geq1}$ satisfying~(\ref{claim}), which completes the construction of $S$ and the claim. 

Now, choose an integer $n>0$ such that $\dfrac{\tau}{n}<\varepsilon$. For such $n>0$, define 
\begin{equation*}  N_k=
	\begin{cases}
		[\frac{1}{n-k+1},\frac{1}{n-k}], & \text{if }k=1,\cdots,n-1; \\
		[-\frac{1}{n},\frac{1}{n}], & \text{if }k=0; \\
		[-\frac{1}{n+k},-\frac{1}{n+k+1}], & \text{if }k=-1,\cdots,-(n-1).
	\end{cases}
\end{equation*}
Then $S$ will possess a partition as $$S=\bigcup_{k=-(n-1)}^{n-1}Q_{k},$$
where 
\begin{equation*}
	Q_{k}= S\cap \{m\tau+\tau\cdot N_{k}: m\in\mathbb{N}\}.
\end{equation*}
Clearly, $q\in Q_{k}$ if and only if 
\begin{equation}\label{Q}
	q-m\tau\in \tau\cdot N_{k},\quad \text{ for some integer }m\geq0. \tag{A9}
\end{equation}
We observe that in order to prove~(\ref{1pp}), it suffices to prove $Q_0\neq \emptyset$ (Indeed, if $p\in Q_{0}\neq \emptyset$, then there exists an integer $n_{0}\geq0$ with $\left| p-n_{0}\tau\right| < \varepsilon$. So, $p\in S\cap \mathcal{T}(\tau,\varepsilon)\subset N(z,\theta)\cap \mathcal{T}(\tau,\varepsilon)\neq\emptyset$).

For this purpose, choose an integer $k_{1}\in [-(n-1),n-1]$ and a subsequence $\{p^{1}_{i}\}_{i \geq 1}$ of $\{p_{i}\}_{i \geq 1}$ such that $\{p^{1}_{i}\}_{i \geq 1}\subset Q_{k_{1}}$. If $k_1=0$, we've done. In the following, we only focus on the case of $k_{1}>0$. The case of $k_{1}<0$ is analogous.

Note that $N_{k_{1}}=[\frac{1}{n-k_1+1},\frac{1}{n-k_1}]$.
By (\ref{Q}), for any $p^{1}_{j}\in\{p^{1}_{i}\}_{i\geq1}$, there exists an integer $m^{1}_{j}\geq0$ such that $p^{1}_{j}-m_{j}^1 \tau\in \tau\cdot N_{k_{1}}$.
For any $i\geq1$, we set $$p^{2}_{i}=\sum_{j=(i-1)(n-k_1)+1}^{i(n-k_1)}p^{1}_{j}\ \ \text{ and }\ \ m_i^2=1+\sum_{j=(i-1)(n-k_1)+1}^{i(n-k_1)}a_{j}^1.$$
Clearly, $p_i^2\in S$, and
\begin{equation*}
	\begin{split}
		p_i^2 - m_i^2\tau
		&= \sum_{j=(i-1)(n-k_1)+1}^{i(n-k_1)}(p^{1}_{j}-m_{j}^1\tau)-\tau   \\
		&\in(n-k_1)\tau\cdot\left[\frac{1}{n-k_1+1},\frac{1}{n-k_1}\right]-\tau \\
		&= \tau\cdot \left[-\frac{1}{n-k_1+1},0\right]
		\subset \bigcup_{j=-(k_{1}-1)}^{0} \tau\cdot N_{j}.
	\end{split}
\end{equation*}
By (\ref{Q}), it entails that $\{p^{2}_{i}\}_{i\geq1}\subset \bigcup_{-(k_1-1)\leq j \leq 0} Q_{j}$.

Since $\{p_{i}^2\}_{i\geq1}$ is countable, there is an integer $k_{2}$ and a subsequence of $\{p_{i}^2\}_{i\geq 1}$, still denoted by $\{p^{2}_{i}\}_{i\geq1}$, such that $k_2\in (-k_1, 0]$ and $\{p^{2}_{i}\}_{i\geq 1}\subset Q_{k_{2}}$. If $k_2=0$, we've done again. If $k_{2}<0$, then $N_{k_{2}}=[-\frac{1}{n+k_2},-\frac{1}{n+k_2+1}]$ and $p_i^2 - m_i^2\tau \subset \tau\cdot N_{k_2}$ for any $i\geq1$.

For $i\geq1$, we write $$p^{3}_{i}=\sum_{j=(i-1)(n+k_2)+1}^{i(n+k_2)}p^{2}_{j}\ \ \text{ and }\ \  m_i^3=-1+\sum_{j=(i-1)(n+k_2)+1}^{i(n+k_2)}m_{j}^2.$$
So, $p_i^3\in S$, and
\begin{equation*}
\begin{split}
	p_i^3 - m_i^3\tau
	&= \sum_{j=(i-1)(n+k_2)+1}^{i(n+k_2)}(p^{2}_{j}-m_{j}^2\tau)+\tau   \\
	&\in-(n+k_2)\tau\cdot\left[\frac{1}{n+k_2},\frac{1}{n+k_2+1}\right]+\tau \\
	&= \tau\cdot \left[0, \frac{1}{n+k_2+1}\right]
	\subset \bigcup_{j=0}^{-k_2-1} \tau\cdot N_{j}.
\end{split}
\end{equation*}
Again, by (\ref{Q}), we obtain $\{p_i^3\}_{i\geq1}\subset \bigcup_{0\leq j\leq -k_2-1} Q_{j}$.

Similarly as above, choose an integer $k_{3}\geq0$ and a subsequence of $\{p_{i}^3\}$, still denoted by $\{p_{i}^3\}$, such that $k_3\in [0,-k_2)$ and $\{p^{3}_{i}\}\subset Q_{k_{3}}$. If $k_3=0$, we've done. If $k_{3}>0$, we can repeat the same argument for $k_{1}$.

By repeating the steps above, we finally obtain a strictly decreasing integer sequence $|k_1|>|k_2|>\cdots\geq0$. Therefore, one can obtain $k_{l}=0$ for some $l$. In other words, we obtain $\{k_{i}\}_{i=1}^{l}$ satisfying $|k_1|>|k_2|>\cdots>|k_l|=0$, such that each $Q_{k_{i}}$ is at least countable for any $i=1,\dots,l$. In particular, we have proved the fact that $Q_{k_{l}}(=Q_0)$ is nonempty.
\end{proof}

\end{document}